\renewcommand{\@algocf@capt@plain}{above}
\let\c@table\c@figure
\Crefname{algocf}{Algorithm}{Algorithms}
\numberwithin{equation}{section}
\numberwithin{figure}{section}
\numberwithin{table}{section}
\theoremstyle{plain}
\newtheorem{thm}{Theorem}[section]
\crefname{thm}{theorem}{theorems}
\theoremstyle{definition}
\newtheorem{defn}[thm]{Definition}
\theoremstyle{definition}
\theoremstyle{plain}
\newtheorem{prop}[thm]{Proposition}
\theoremstyle{plain}
\theoremstyle{plain}
\newtheorem{lemma}[thm]{Lemma}
\theoremstyle{plain}
\newtheorem{ass}[thm]{Assumption}
\crefname{ass}{assumption}{assumptions}
\theoremstyle{plain}
\newtheorem{rem}[thm]{Remark}
\theoremstyle{definition}
\theoremstyle{definition}
\newcommand{\E}{\mathbb{E}}
\newcommand{\R}{\mathbb{R}}
\newcommand{\N}{\mathbb{N}}
\newcommand{\PP}{\mathbb{P}}
\newcommand{\Pcal}{\mathcal{P}}
\newcommand{\de}{\mathrm{d}}
\newcommand{\pr}[1]{{#1}^{\prime}}
\newcommand{\JTstar}{J^{T,*}}
\newcommand{\QTstar}{Q^{T,*}}
\newcommand{\esssup}[1]{\mathrm{ess\,sup}_{#1}}
\newcommand{\essinf}[1]{\mathrm{ess\,inf}_{#1}}
\title{Mean-field games of speedy information access\\ with observation costs}
\author{Dirk Becherer\thanks{Institute of Mathematics, Humboldt Universit\"at zu Berlin \newline\hspace*{1.8em}(\texttt{becherer@math.hu-berlin.de})} \and Christoph Reisinger\thanks{Mathematical Institute, University of Oxford \newline\hspace*{1.8em}(\texttt{reisinge@maths.ox.ac.uk}, \texttt{tam@maths.ox.ac.uk})} \and Jonathan Tam\footnotemark[2]}
\date{July 5, 2026}
\begin{document}

\maketitle

\abstract{We investigate mean-field games (MFG) in which agents actively control their speed of access to information. Specifically, the agents can dynamically decide to obtain observations with reduced delay by accepting higher observation costs. Agents seek to exploit their active information acquisition by making further decisions to influence their state dynamics so as to maximise rewards. In a mean-field equilibrium, each generic agent solves individually a partially observed Markov decision problem in which the way partial observations are obtained is itself subject to dynamic control actions, while no agent can improve unilaterally given the actions of all others. We formulate the mean-field game with controlled costly information access as an equivalent standard mean-field game on an augmented space, by utilizing a parameterisation of the belief state by a finite number of variables. With sufficient  entropy regularisation, a fixed point iteration converges to the unique MFG equilibrium. Moreover, we derive an approximate $\varepsilon$-Nash equilibrium for a large but finite population size and small regularisation parameter. We illustrate our (extended) MFG of information access and of controls by an example from epidemiology, where medical testing results can be procured at different speeds and costs. 
}

\section{Introduction} 

In dynamic decision making, one often has the opportunity to improve the quality of one’s observations by expending extra resources. Balancing information acquisition against its associated costs is therefore central to optimising rewards. In many applications, actions affect not only outcomes but also the timing and quality of future observations. Medical laboratories may invest in resources to reduce test turnaround times; investors may delay trades while gathering or processing information; costs and waiting times associated with different health service options may affect individual choices
on personal and societal health. Decisions can moreover be affected by perceived collective behaviour. While some decisions are more static in nature (e.g. investing in infrastructure), some allow for dynamic choices (e.g. paying for faster test results). We focus on the latter, mathematically and computationally more challenging situation.\\

With likewise examples in mind, we introduce a novel mean-field game (MFG) model in discrete time, in which agents actively control their speed of access to information. During the game, agents can adjust their information access speed by respective costly efforts, and exploit their dynamic information flow to control their state dynamics to maximise expected rewards. This gives rise to a problem of partial observations, in which the flow of information is not exogenously given but instead dynamically controlled by the agents. Consequently, the information flow is endogenously obtained as part of a mean-field equilibrium. We model the speed of information access in the MFG through observations which are delayed to varying degrees, and utilise the information structure to construct a suitable augmentation of the state space. The augmented space includes past actions taken within the dynamic delay period, and serves a finite-dimensional parametrisation of an equivalent MFG in the belief state. Thereby, numerical schemes for discrete MFGs can be employed to compute approximate mean-field Nash equilibria (MFNE).\\

This paper covers three themes: (1) the dynamic control of information access by observation delay, (2) observation costs, and (3) the analysis of associated MFGs incorporating the said two features. Standard Markov decision processes (MDP) assume that state observations are received instantaneously. This limits the applicability of MDPs in many real-life situations, where it is often the case that observation delay arises due to inherent features or practical limitations of a system.\\

\emph{Information delays.} There has been a large amount of literature involving the modelling of observation delays, with applications in (but not limited to) network communications \cite{adlakha2008information, adlakha2011networked}, quantitative finance \cite{cartea2023optimal,abel2013optimal,bruder2009impulse, oksendal2008optimal}, medical diagnosis \cite{rong_covid19} and reinforcement learning \cite{chen2021delay, schuitema2010control, stochastic_delays_RL}. Most models involve an MDP framework with either a constant or random observation delay, both of which are exogenously given by the system. Both constant and random observation delay MDPs can be modelled as a partially observable MDP (POMDP) via state augmentation \cite{bander1999markov, altman_nain, asynchronous_collection, stochastic_delays_RL}. It has also been shown that action delays can be considered as a form of observation delay, under a suitable transformation of the MDP problem \cite{asynchronous_collection}. The continuous-time counterpart with an associated HJB-type master equation has been studied in \cite{saporito_zhang}.\\

\emph{Information costs.} In the MDP framework, the information source upon which control decision can depend on is fixed \textit{a priori} and thus exogenously given. As already pointed out by Bismut \cite{bismut1978introductory}, however, in many relevant applications the observations available at a time would naturally depend on the controls exercised previously, whereas formulations of standard theory “\textit{assume $\sigma$-fields $\mathcal{F}_t$ do not depend on the control, i.e., the information available at time t does not vary with the control u. This is clearly counterintuitive}''. Indeed, further applications have since emerged over the years that feature control-dependent observations, particularly in resource-constrained environments, where frequent measurements or sampling are either too expensive, laborious or impractical. Applications include efficient medical treatment allocation \cite{winkelmann_markov_2014},
environmental management \cite{YOSHIOKA_1_impulse_control, Yoshioka_2_river_impulse, Yoshioka_3_monitoring_deviation, YOSHIOKA_4_biological_stopping},
optimal inattention in investment behaviour of individual consumers \cite{abel2013optimal},
communications sampling \cite{paging_registration, sampling_guo_2021}, optimal sensing \cite{sensor_energy_harvesting, wu2008optimal,tzoumas2020lqg}, reinforcement learning \cite{bellinger2020active, krueger2020active, bellinger2022}, and more. Recently, \cite{gu2026information} proposed a theory for information control in which the filtration itself is the (directly) controlled object.

We shall refer to problems where the user can opt to receive some observation about the current state of the process, at the price of an observation cost which is included in the reward functional to be optimised, as observation cost models (OCMs). An OCM can equivalently be characterised as a POMDP, by including the time elapsed together with the last states observed and actions applied to form an augmented Markov system. In many cases, a reasonable simplification is to assume constant actions between observations \cite{huang2021self}, or open-loop controls parameterised by finite number of variables \cite{reisinger2022markov}. This leads to a finite dimensional characterisation of the augmented state, and allows efficient computation of the resulting system of quasi-variational inequalities by a penalty scheme \cite{reisinger2022markov}. Analysis for the more general case of non-constant actions has generally been restricted to linear-quadratic Gaussian problems \cite{wu2008optimal, tzoumas2020lqg, cooper1971optimal}.\\

\emph{Mean-field games.} In stochastic games, the computation of Nash equilibria is often intractable for a large number of players. Mean-field games (MFGs), first introduced in \cite{lasry2007mean} and \cite{caines2006large}, provide a way of seeking approximate Nash equilibria, by assuming symmetry between agents that can be modelled by a mean-field term, in the form of a measure flow. MFGs can be regarded as asymptotic approximations for games with a large number of interacting players. Finding a mean-field Nash equilibrium (MFNE) amounts to a search for an optimal policy for a representative player, while ensuring that the state distribution of this player under such a policy is consistent with the postulated  evolution of distributions of the other players, described by the measure flow. It is also possible to model interaction of the players through the empirical distribution of both their states and their controls \cite{lauriere_tangpi2022,cardaliaguet2018mean}. Such formulations in the mean-field regimes are 
sometimes 
also referred to as extended MFGs or as MFG of controls \cite{gomes2016extended,carmona2018probabilistic,cardaliaguet2018mean}.

In discrete time, the existence of MFNE has been established in \cite{saldi_mfg_discrete}. Analysis has also appeared for several problem variants involving risk-sensitive criteria \cite{saldi2022partially}, partially observable systems \cite{saldi2019approximate,saldi2022partially} or unknown reward and transition dynamics \cite{guo2019learning}. However, MFGs may suffer from non-uniqueness of MFNE and non-contractivity of a naively iterated fixed point algorithm \cite{cui2021approximately}. Several algorithms have emerged to address efficient computations of MFNEs. Entropy regularisation exploits a duality between convexity and smoothness to achieve contractivity, either by incorporating entropic penalties directly into the reward functional, or by imposing softmax policies during the optimisation step \cite{geist2019theory, saldi_regularisation, cui2021approximately}. Fictitious play schemes aim to smooth mean-field updating by averaging for new iterates over the past mean-field terms, effectively damping  updates to aid numerical convergence  \cite{perrin2020fictitious}. Online mirror descent further decreases computational complexity by replacing best response updates with direct $Q$-function computations \cite{perolat2021scaling}. In contrast, \cite{mf-omo} reformulates the problem to find MFNE to an equivalent optimisation problem, allowing a possible search for multiple MFNE with standard gradient descent algorithms. We refer to \cite{lauriere2022learning} for a comprehensive overview of the above algorithms.\\

\noindent \textbf{Our work.} We model agents' strategic choices about their speed of information access in the game by a novel MFG in which the dynamic information access 
in itself is subject to costly controls. Throughout the paper, we assume that both the state and action spaces are finite. The agents participating in the game exercise control over two aspects: actions that directly influence their state transition dynamics and rewards (as usual in MFG), and the temporal duration of their access speed to information, modelled as an observation delay. Agents dynamically choose from a given finite set of delay periods, each value of which corresponds to some observation cost. To acquire more timely observations, higher information costs are incurred, and vice versa.\\

Our framework here differs from existing works in that the observation delay is not exogenously given, as in the constant case \cite{altman_nain, bander1999markov}, nor is it a given random variable as in the stochastic case \cite{stochastic_delays_RL, cartea2023optimal, chen2021delay}. Instead, the length of the delay is dynamically and actively decided by the agent, based on a trade-off between the extra cost versus the accuracy of more speedy observations, the latter of which can be exploited though better informed control of the dynamics and hence higher rewards. The choice of the delay period becomes an extra part of the control in the optimisation problem in tandem with the agent's actions. When considering this as a single agent problem, as it occurs during the optimisation step where the measure flow is given, we refer to it as a Markov controllable delay model (MCDM). The MCDM can be reformulated in terms of a POMDP, by augmenting the state with the most recent observation and the actions taken thereafter, to constitute a Markovian system in a fully observable form. This permits for dynamic programming in the respective augmented state space to obtain the Bellman equation.\\

When viewed as part of the overall MFG, the partial information structure of the problem implies that the measure flow should be specified on the augmented space for the fixed point characterisation of the MFNE. However, the underlying transition dynamics and reward structure depend on the distribution of the (non-observed) states at the present time. In the models of \cite{saldi2019approximate, saldi2022partially} with partial information being of quite different structure, given by noisy observations, the barycenter map is used to map measures on a suitably augmented space to the respective measures on the underlying states. However, our model here differs in two aspects. Firstly, our belief state (posterior distribution of non-observed states) admits a description by finitely many variables, so that the equivalent fully-observed MFG is established on a finite-dimensional augmented space, which is motivated by and helps for numerical computation. Secondly, due to the structure from actively controlled information delay, the observation kernel depends on the distribution of the states throughout each moment in time across the entire delay period. Thus, taking an average of a distribution over the augmented space of parameters, as a barycenter map would do, is not applicable here. Instead, by using the delay structure, we explicitly 
calculate the required map from a measure flow on the augmented space to a sequence of measures on the underlying (non-augmented) states. Intuitively, this corresponds to an agent estimating the distribution of the current states of the population, given the observations they possess. We detail the construction of the MCDM in \Cref{sec:single_agent} and the corresponding MFG formulation, which we will also refer to as the MFG-MCDM, with its MFNE definition in \Cref{sec:MFG}.\\

The second part of this paper, from Section \ref{sec:regularise}, focuses on the computation of an MFNE for the MFG-MCDM. We employ the seminal (relative) entropy regularisation technique, which aids convergence of the classical iterative scheme: computing an optimal policy (of a single player) as a best response to a given measure flow, followed by computing the distribution (in augmented state) resulting from said policy. In the standard MFG model, it 
has been shown that the fixed point operator for the regularised problem is contractive under mild conditions \cite{saldi_regularisation, cui2021approximately}. This forms the basis of the prior descent algorithm, which is one of the current state-of-the-art algorithms for the computation of approximate Nash equilibria for MFGs \cite{cui2021approximately,mfglib}. We prove that for our MFG-MCDM, the corresponding fixed point operator also forms a contraction, provided it is sufficiently regularised by an entropic penalty term.\\

Apart from actively controlled information access, which is the main contribution, our analysis with compact Polish state and action spaces also extends some results due to
\cite{cui2021approximately, saldi_regularisation} about entropy regularized MFGs in finite spaces. Our infinite horizon discounted cost problem with time-dependent measure flows also complements the treatment in \cite{cui2021approximately} for finite horizon problems, and the result in \cite{saldi_regularisation} for infinite horizon problems where the measure flows are taken to be stationary. As the MFG-MCDM is a problem with partial observations, the proof also requires a crucial extra step to demonstrate that the aforementioned mapping of the measure flow on the augmented space to that on the underlying space is Lipschitz continuous, in order to conclude the desired fixed point contraction.\\

The contributions of this article can be summarised as follows.

\begin{enumerate}
    \item We show dynamical programming for a Markov controllable delay model (MCDM), an MDP model where an individual agent can exercise dynamic control over the latency of their observations, with less information delay being more costly. The problem is cast in terms of a partially observed MDP (POMDP) with controlled but costly partial observations, for which the belief state can be parameterised by a finite number of variables. Solving this POMDP is shown to be equivalent to solving an MDP on an augmented state space, whose extension also involves past actions taken during the (non-constant but dynamically controlled) delay period.
    \item We introduce a corresponding mean-field game (MFG) where speedy information access is subject to the agents' strategic control decisions. For a fixed measure flow, which describes the statistical population evolution, the ensuing single agent control problem becomes an MCDM. Although a mean-field Nash equilibrium (MFNE) is defined in terms of the augmented space, the underlying dynamics and rewards still depend on the underlying state distribution. We show how a measure flow on the underlying space is determined and computed from that of the augmented space. This construction exploits our parameterisation of the belief state; whereas the barycenter approach in \cite{saldi2019approximate} for a measure-valued belief state does not apply here.
    \item   
    By using a sufficiently strong (relative) entropy regularisation in the reward functional, we prove that the regularised MFG-MCDM has a unique MFNE, which is described by a fixed point and can serve as an approximate Nash equilibrium for a large but finite population size. Our setting with the regularised MFG in Polish spaces contributes to a line of work by \cite{saldi_regularisation, cui2021approximately}, whose articles are using finite state and action spaces. Results extend to a MFG formulated on infinite horizon with non-stationary but time-dependent measure flows. We also outline a straightforward extension of the results above to the case of MFG-MCDMs with extended mean field interaction through both the controls and the states. 
    \item We illustrate our model by an example from epidemiology, in which we compute both qualitative effects from actively controlled information access and respective costs to the equilibrium, and also the quantitative properties of convergence relating to the entropy regularisation. For the computation, we employ the Prior Descent algorithm \cite{cui2021approximately}, using the \texttt{mfglib} Python package \cite{mfglib} as a basis for the MFG-MCDM. This example also demonstrates the extension of MFG of information access to (extended) MFG of controls \cite{gomes2016extended,cardaliaguet2018mean}, as it is derived in \Cref{sec:extended}.
\end{enumerate}
~\\
The remainder of the paper is organised as follows. \Cref{sec:single_agent} formulates the single agent problem, and establishes the MCDM in terms of a POMDP on a finitely augmented space. \Cref{sec:MFG} sets up the corresponding MFG-MCDM, as well as the fixed point characterisation for an MFNE. \Cref{sec:regularise,sec:approximate} establishes uniqueness of MFNE for a sufficiently regularised MFG-MCDM, and that a regularised MFNE yields an approximate equilibrium for the finite player game. \Cref{sec:extended} briefly outlines how the aforementioned results extend also to so-called extended MFG, where interaction occurs also through the controls. Finally, \Cref{sec_numerics} demonstrates a numerical example based on epidemiology, which illustrates the effects of the cost on information access speed on the population behavior at equilibrium.

\subsection{Notation and preliminaries} \label{sec:notation}

Let $E$ be a Polish space, that is $E$ is a separable and complete metric space (we take a suitable metric under which completeness holds to be already chosen). Being equipped with the Borel sigma algebra $\mathcal{B}(E)$, then $(E, \mathcal{B}(E))$ is a Borel space. Let $C_b(E)$ denote the set of continuous and bounded functions on $E$, and let $\Pcal(E)$ denote the space of probability measures on $E$. We say a sequence of measures $\{\mu_n\}$ converges to $\mu$ weakly if $\int f\de \mu_n \to \int f \de \mu$ for all $ f \in C_b(E)$. If $E$ is separable, the weak convergence of measures is equivalent to the convergence in the L\'evy--Prokhorov metric
\begin{align*}
    \delta_{\rm Pr}(\mu, \nu) = \inf\{ \epsilon > 0: \nu(B) \leq \mu(B^{\varepsilon})+ \varepsilon, \ \mu(B) \leq \nu(B^{\varepsilon})+ \varepsilon,\ \forall B \in \mathcal{B}(E) \},
\end{align*}
where $B^{\varepsilon}$ denotes the $\varepsilon$--neighbourhood of the set $B$. Furthermore, by Prokhorov's theorem (see, e.g., \cite[Section 8.6]{bogachev2007measure}), if $E$ is compact then the space $(\Pcal(E), \delta_{\rm Pr})$ is compact. We shall also consider the total variation metric $\delta_{TV}$, induced by the total variation norm on the space of signed measures. That is,
\begin{align*}
    \delta_{TV}(\mu, \nu) = \lVert \mu - \nu \rVert_{TV} = \sup_{A\in \mathcal{B}(E)} \lvert \mu(A) - \nu(A) \rvert.
\end{align*}
For any measurable space $E$, the space $(\Pcal(E), \delta_{TV})$ is a closed subset of the Banach space of finite signed measures on $E$, hence it itself is also complete. If both $\mu$ and $\nu$ are absolutely continuous with respect to a measure $m$, with densities $p$ and $q$ respectively, then we also have
\begin{align*}
    \delta_{TV}(\mu, \nu) = \frac{1}{2}\int\lvert p(x) - q(x) \rvert m(\de x).
\end{align*}

Given two Borel spaces $E$ and $\hat{E}$, a Markov kernel on $\hat{E}$ given $E$ is a function $q: E \times \mathcal{B}(\hat{E}) \to [0,1]$ with the properties that for each $e \in E$, $q(e, \cdot)$ is a probability measure on $(\hat{E}, \mathcal{B}(\hat{E}))$, and for each $\mathcal{E} \in  \mathcal{B}(\hat{E})$, $q(\cdot, \mathcal{E})$ is measurable.
We will often consider a Markov kernel $q$ as a function $\tilde{q}$ from $E$ to $\Pcal(\hat{E})$ via $\tilde{q}(e) = q(e, \cdot)$. Since we are concerned with the state transitions of MDPs, we will also refer to a Markov kernel as a transition kernel.

\begin{defn}
    A Markov decision process (MDP) is a tuple $\langle \mathcal{X}, A, p, r \rangle$, where
    \begin{itemize}
        \item the \textit{state space} $\mathcal{X}$ and the \textit{action space} $A$ are non-empty Polish spaces, each equipped with its Borel sigma field;
        \item the \textit{transition kernel} is a map $p: \mathcal{X} \times A \to \Pcal(\mathcal{X})$;
        \item the \textit{reward function} $r: \mathcal{X} \times A \to [0, \infty)$ is bounded and measurable.
    \end{itemize}
\end{defn}

Since we will often consider sequences of states and actions taken, we will use a shorthand notation with the bracket, e.g. $(x)^n_1 \coloneqq (x_1, \ldots, x_n)$. The $n$-step transition kernels $(p^{(n)})_{n \in \N}$ can then be recursively defined by the relation
\begin{align*}
    p^{(n)}(\de x_n \mid  x_0,(a)_0^{n-1}) =p(\de x_n\mid x_{n-1}, a_{n-1})\ p^{(n-1)}(\{x_{n-1}\} \mid x_0,(a)_0^{n-2}).
\end{align*}

In the context of mean-field games, the transition kernel and reward functions will also depend on an input measure, so will be considered as functions $p: \mathcal{X} \times A \times \Pcal(\mathcal{X}) \to \Pcal(\mathcal{X})$ and  $r: \mathcal{X} \times A \times \Pcal(\mathcal{X})\to [0, \infty)$ respectively. We denote by $(\Pcal(E))^{T}$ the space of measure flows on $E$, with $T \in \N \cup \{\infty\}$. If $T$ is finite, we equip $(\Pcal(E))^{T}$ with the sup metric
\begin{align*}
    \delta_{\mathrm{max}}(\bm{\mu}, \bm{\hat{\mu}}) = \max_{0 \leq t \leq T} \delta_{TV}(\mu_t ,\hat{\mu}_t),\quad \bm{\mu}, \bm{\hat{\mu}} \in (\Pcal(E))^{T}.
\end{align*}
For $T = \infty$, we instead use the metric, for $\zeta >0$,
\begin{align}\label{eq_flow_metric}
    \delta_{\infty}(\bm{\mu}, \bm{\hat{\mu}}) =
    \sum^{\infty}_{t=0} \zeta^{-t} \delta_{TV}(\mu_t ,\hat{\mu}_t),\quad \bm{\mu}, \bm{\hat{\mu}} \in (\Pcal(E))^{T}.
\end{align}
While that the choice of $\zeta$ is not canonical, note that $\delta_{\infty}$ induces the product topology on $\Pcal(E)^{\infty}$ as long as $\zeta > 1$. We will consider randomised Markovian policies in this paper, such a policy $\pi = (\pi_t)_t$ is described by a sequence of maps $\pi_t : E \to \Pcal(\pr{E})$, $t \geq 0$, which we equip with the norm
\begin{align}\label{metric:policy}
    \delta_{\mathcal{A}}(\pi, \hat{\pi}) = \sum^{\infty}_{t=0} \zeta^{-t} \sup_{e \in E} \delta_{TV}(\pi_t(\cdot \mid e), \hat{\pi}_t(\cdot \mid e)).
\end{align}

We will frequently make use of the following basic inequality in our analysis. For $P ,Q \in \Pcal(E)$, we write $P \ll Q$ to say that $P$ is absolutely continuous with respect to $Q$.
\begin{prop}\label{prop: maxmin_ineq}
Let $\mu, \hat{\mu} \in \Pcal(E)$, and let $m \in \Pcal(E)$ such that $\mu, \hat{\mu} \ll m$. Then
    for any measurable function $f$ on $E$, we have the inequality
    \begin{align}\label{eq:minmax_ineq}
        \left\lvert \int_{e \in E} f(e) \mu (\de e) - \int_{e \in E} f(e) \hat{\mu}(\de e) \right \rvert \leq \lambda(f)\ \delta_{TV}(\mu, \mu^{\prime}),
    \end{align}
    for $\lambda(f) \coloneqq {\esssup{e\in E}}f(e) - {\essinf{e \in E}}f(e)$, and the essential supremum and essential infimum are taken with respect to the measure $m$. Moreover, if $E$ is compact and $f$ is continuous, then \eqref{eq:minmax_ineq} holds for $\lambda(f) = \sup_{e \in E} f(e) - \inf_{e \in E} f(e) = \max_{e\in E}f(e) - \min_{e \in E}f(e)$.
\end{prop}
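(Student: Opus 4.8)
The plan is to reduce \eqref{eq:minmax_ineq} to an elementary $L^{\infty}$--$L^{1}$ bound, after exploiting a shift invariance that lets me replace $f$ by a version with the smallest possible essential oscillation. The decisive observation is that, because $\mu$ and $\hat{\mu}$ are both \emph{probability} measures, the left-hand side is unchanged when $f$ is replaced by $f-c$ for any constant $c$: the contributions $c\,\mu(E)=c$ and $c\,\hat{\mu}(E)=c$ cancel, so $\int(f-c)\,\de\mu-\int(f-c)\,\de\hat{\mu}=\int f\,\de\mu-\int f\,\de\hat{\mu}$. I may therefore centre $f$ about the midpoint of its essential range before estimating.

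Concretely, I would first dispose of the trivial case $\lambda(f)=\infty$ (the bound then holds vacuously, both sides vanishing when $\mu=\hat{\mu}$), and so assume $\lambda(f)<\infty$, whence $f$ is $m$-essentially bounded and thus integrable against both $\mu$ and $\hat{\mu}$, these being absolutely continuous with respect to $m$. Setting $c\coloneqq\tfrac12\big(\esssup{e\in E}f(e)+\essinf{e\in E}f(e)\big)$ and $g\coloneqq f-c$, I obtain $\lvert g\rvert\le\tfrac12\lambda(f)$ holding $m$-almost everywhere. I then invoke Radon--Nikodym to write $p\coloneqq\de\mu/\de m$ and $q\coloneqq\de\hat{\mu}/\de m$, so that the (shift-invariant) left-hand side becomes $\big\lvert\int_E g(e)\,(p(e)-q(e))\,m(\de e)\big\rvert$.

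Bounding this is immediate: since $\lvert g\rvert\le\tfrac12\lambda(f)$ holds $m$-a.e.\ and $p,q$ are $m$-densities,
\begin{align*}
\left\lvert\int_E g\,(p-q)\,\de m\right\rvert \le \int_E \lvert g\rvert\,\lvert p-q\rvert\,\de m \le \frac{\lambda(f)}{2}\int_E \lvert p-q\rvert\,\de m = \lambda(f)\,\delta_{TV}(\mu,\hat{\mu}),
\end{align*}
where the final equality is exactly the density representation $\delta_{TV}(\mu,\hat{\mu})=\tfrac12\int\lvert p-q\rvert\,\de m$ recorded in the preliminaries. This establishes \eqref{eq:minmax_ineq}.

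For the compact-and-continuous case no separate argument is needed: when $E$ is compact and $f$ continuous, the extreme value theorem gives $\sup_E f=\max_E f$ and $\inf_E f=\min_E f$, while any $m$-essential supremum (resp.\ infimum) is dominated by (resp.\ dominates) the corresponding pointwise quantity, so $\esssup{e\in E}f(e)-\essinf{e\in E}f(e)\le\sup_E f-\inf_E f$; the already-proved inequality is therefore a fortiori valid with the larger constant. I expect no genuine obstacle here --- the crux is simply the shift-invariance trick of paragraph one. The only points requiring a little care are the reduction to the essentially bounded case and keeping the bound on $g$ stated $m$-almost everywhere (rather than pointwise), which is precisely what licenses the sharper essential constant $\esssup{}f-\essinf{}f$ in the general measurable case.
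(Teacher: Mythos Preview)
Your proof is correct and complete. The paper itself does not give a proof but simply cites \cite[p.141]{georgii2011gibbs}; your argument --- shift by the midpoint of the essential range to make $\lvert g\rvert\le\tfrac12\lambda(f)$ hold $m$-a.e., then apply the density formula $\delta_{TV}(\mu,\hat\mu)=\tfrac12\int\lvert p-q\rvert\,\de m$ --- is precisely the standard elementary proof one finds in that reference, so there is nothing to contrast.
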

\begin{proof}
    See \cite[p.141]{georgii2011gibbs}.
\end{proof}

Next, let us recall basic notions of differentiability in Banach spaces (see e.g.\  \cite[Chapter VIII]{dieudonne2011foundations}).

\begin{defn}
     Let $E$ be a Banach space and $D$ be an open subset of $E$. For $x \in E$ and $\varepsilon > 0$, let $B_{\varepsilon}(x)$ denote the open ball of radius $\varepsilon$ around $x$. A function $f: D \to \R$ is Fr\'echet differentiable at a point $x \in D$, if there exists $\varepsilon > 0$, a continuous linear map $L: E \to \R$ and a function $\theta: B_{\varepsilon}(0) \to \R$ such that
     \begin{align*}
         f(x +h) - f(x) = L(x) + \theta(h)
     \end{align*}
    for any $x + h \in B_{\varepsilon}(x) \cap D$ and $\lim_{h\to 0} \vert \theta(h)\vert/ \lVert h \rVert_E = 0$. $f$ is differentiable on $D$ if it is differentiable at every point in $D$.
\end{defn}

If $f$ is Fr\'echet differentiable at $x$, the Fr\'echet derivative $L(x)$ of $f$ at $x$ is unique and denoted by $df_x$. For later use, we recall a mean value theorem for functions with domains in Banach spaces. 

\begin{thm}\label{mean_value_theorem}
    Let $E$ and $F$ be Banach spaces and $D$ be an open subset of $E$. Let $x, y \in D$ and suppose that the closed line segment joining $x$ and $y$ is contained in $D$. Let $f: D \to F$ be continuous on the closed line segment joining $x$ and $y$, and differentiable on the open segment joining $x$ and $y$. Then, there exists $t^{*} \in (0,1)$ such that for $ z:= (1-t^{*})x  + t^{*}y$,
    \begin{align*}
        \lVert f(y) - f(x) \rVert \leq \lVert df_z (y-x) \rVert \leq \lVert df_z \rVert \lVert (y-x) \rVert.
    \end{align*}
    Furthermore, in the case $F = \R$, there exists $t^*$, defining $z$, such that we have the equality 
    \begin{align*}
     f(y) - f(x) =  df_z (y-x).
    \end{align*}
\end{thm}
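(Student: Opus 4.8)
The plan is to reduce the statement to the classical one-dimensional mean value theorem by restricting $f$ to the line segment joining $x$ and $y$. First I would introduce the affine path $\gamma:[0,1]\to E$, $\gamma(t) = (1-t)x + ty = x + t(y-x)$, whose image lies in $D$ by hypothesis, and set $g := f\circ\gamma : [0,1]\to F$. Since $\gamma$ is affine with constant derivative $y-x$ and $f$ is Fr\'echet differentiable on the open segment, the chain rule for Fr\'echet derivatives yields that $g$ is differentiable on $(0,1)$ with $g'(t) = df_{\gamma(t)}(y-x)$; moreover $g$ inherits continuity on $[0,1]$ from the assumed continuity of $f$ on the closed segment, since $\gamma$ is continuous. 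This transfers the problem to a function of a single real variable taking values in $F$.

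For the case $F = \R$ I would simply invoke the classical scalar mean value theorem applied to $g$: there exists $t^*\in(0,1)$ with $g(1)-g(0) = g'(t^*)$. Writing $z := \gamma(t^*) = (1-t^*)x + t^*y$ and unwinding the definitions gives $f(y) - f(x) = df_z(y-x)$, which is the claimed equality.

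For a general Banach space $F$, exact equality generally fails, so the key step is a Hahn--Banach reduction to the scalar case. By the Hahn--Banach theorem there exists a continuous linear functional $\phi \in F^*$ with $\lVert \phi \rVert = 1$ and $\phi(f(y)-f(x)) = \lVert f(y)-f(x) \rVert$. I would then apply the scalar argument above to the real-valued function $h := \phi \circ g$, which is continuous on $[0,1]$ and differentiable on $(0,1)$ with $h'(t) = \phi\bigl(df_{\gamma(t)}(y-x)\bigr)$ by linearity and continuity of $\phi$. The scalar mean value theorem supplies $t^*\in(0,1)$ with $h(1)-h(0) = h'(t^*)$; since the left-hand side equals $\lVert f(y)-f(x) \rVert$ by the choice of $\phi$, and the right-hand side equals $\phi(df_z(y-x))$ with $z=\gamma(t^*)$, we obtain
\[
\lVert f(y)-f(x) \rVert = \phi\bigl(df_z(y-x)\bigr) \le \lVert \phi \rVert\, \lVert df_z(y-x) \rVert = \lVert df_z(y-x) \rVert \le \lVert df_z \rVert\, \lVert y-x \rVert,
\]
which is exactly the asserted chain of inequalities, using that $df_z$ is a bounded linear map for the final step.

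The main obstacle is conceptual rather than computational: recognising that the naive vector-valued mean value \emph{equality} is false, and that one must norm the increment $f(y)-f(x)$ by a Hahn--Banach functional before applying the scalar theorem, which is precisely why the statement degrades to an inequality for general $F$. The only routine verifications are the chain-rule computation of $g'$ and $h'$ and the passage from the definition of the Fr\'echet derivative to these derivatives, both of which are standard.
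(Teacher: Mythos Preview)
The paper does not actually supply a proof of this theorem: it is stated in the preliminaries as a standard result being recalled, with an implicit reference to \cite[Chapter~VIII]{dieudonne2011foundations}. Your argument is correct and is precisely the classical proof one finds in that reference --- reduce to the segment via $g=f\circ\gamma$, apply the scalar mean value theorem directly when $F=\R$, and for general $F$ norm the increment by a Hahn--Banach functional before invoking the scalar case --- so there is nothing to compare beyond noting that you have reproduced the textbook proof the paper is citing.
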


Finally, we recall here the notion on distances between sets.
\begin{defn}\label{hausdorff}
    Let $(E,d)$ be a metric space. For any set $X \subseteq E$, define 
    \begin{align*}
        X_{\varepsilon} \coloneqq \bigcup_{x \in X} \{ e \in E: d(e,x) \leq \varepsilon\}.
    \end{align*}
    For any sets $X, Y \subseteq E$, the Hausdorff distance between $X$ and $Y$ is defined by
    \begin{align*}
        d_H(X,Y) \coloneqq \inf\{\varepsilon \geq 0 : X \subseteq Y_{\varepsilon}\ \rm{and}\ Y \subseteq X_{\varepsilon}\}.
    \end{align*}
\end{defn}

\subsection{Brief outline of the framework}
\label{subsec:outline}

We present in this section an outline of the framework and main results of the paper. We emphasise that the focus here is on the intuition and general ideas, and for this purpose we frequently refer here to specific places in later sections for the precise definitions.\\

We refer to our model in consideration as a Markov controllable delay model (MCDM), which consists of an MDP $\langle \mathcal{X}, A, p, r \rangle$, together with a set of \textit{delay values} $\mathcal{D} = \{ d_0, \ldots, d_K\}$, with $ 0 \leq d_K < \ldots < d_0$, and a set of \textit{cost values} $\mathcal{C} = \{ c_0, c_1, \ldots, c_K\}$, with $0 = c_0 < c_1 \ldots < c_K$. In an MCDM, a player can control both their actions (with values in $A$), as well as their observation delay (with values in $\mathcal{D}$), by paying a cost (with values in $\mathcal{C}$). Each $d_k \in \mathcal{D}$ is associated with the corresponding $c_k \in \mathcal{C}$, so that a short delay comes at a high cost, and vice versa. The observation delay determines the set of admissible policies for the player --  given an observation delay of $d$, an admissible policy at time $t$ can only depend on the state variables up to time $t-d$, and the actions taken up to time $t-1$ (cf. \Cref{defn:mcdm_admissible}). \Cref{fig:MCDM_evol} illustrates an example of the evolution of the MCDM over time. This restricted set of admissible policies is denoted by $\mathcal{A}_{\rm DM}$. The cost is incorporated as a negative reward, which leads to the objective
\begin{align*}
   J(\pi) \coloneqq \E^{\pi}_{q_0}\left[ \sum^{\infty}_{t=0} \gamma^{t} \left( r(x_t, a_t) -  \sum^K_{k=1} c_k \mathbf{1}_{\{i_t=k\}} \right)\right],
\end{align*}
where $a_t$ represents the choice of action, $i_t$ represents the choice of delay, and $q_0$ is a distribution over the initial observations, see \eqref{eq:objective_single_agent}. The precise construction of the MCDM problem is detailed in \Cref{sec:single_agent}. When considering a mean-field game setting for the MCDM, the transition kernel $p$ and reward $r$ depend in addition on the distribution of the population (cf. \Cref{defn:mcdm_mf_ver}). Given a measure flow $\bm{\mu}= (\mu_t)_t \in \Pcal(\mathcal{X})^{\infty} $ representing the distribution of the population, we consider instead the objective 
\begin{align*}
    J_{\bm{\mu}}(\pi) \coloneqq \E^{\pi}\left[ \sum^{\infty}_{t=0} \gamma^{t} \left( r(x_t, a_t, 
    \mu_t) -  \sum^K_{k=1} c_k \mathbf{1}_{\{i_t=k\}} \right)\right].
\end{align*}
The notion for a solution for a mean-field game is a mean-field Nash equilibrium (MFNE). This is characterised as a fixed point of the composition of a best-response map and a measure flow map. The best-response map outputs an optimal policy $\pi \in \mathcal{A}_{\rm DM}$ that maximises $J_{\bm{\mu}}(\pi)$ for a fixed measure flow $\bm{\mu} \in \Pcal(\mathcal{X})^{\infty}$. Conversely, the measure flow map outputs a measure flow $\bm{\mu} \in \Pcal(\mathcal{X})^{\infty}$ corresponding to a given policy $\pi \in \mathcal{A}_{\rm DM}$. The fixed point condition acts as a consistency condition, requiring that the distribution of players under an optimal policy computed from a measure flow at Nash equilibrium, to be exactly the measure flow itself.\\

As the MCDM is a model of partial information, it is more convenient to formulate the MFNE in an augmented problem, in the space $\mathcal{Y} \coloneqq \bigcup_{d=d_K}^{d_0} (\{d\} \times \mathcal{X} \times A^d)$. The equivalence of the two formulations for the single agent problem is established in \Cref{prop:equivalence_aug}. However, in the augmented formulation of the mean-field game, the mean-field flow will be defined in the augmented space $\mathcal{Y}$, whilst the transition kernel and reward still depend on measures on the underlying space $\mathcal{X}$. We therefore construct a map that takes
a measure flow $\bm{\nu}$ on the augmented space, to a corresponding $\bm{\mu}^{\nu}$ on the underlying space, through exploiting the finite parameterisation of the belief state and integrating $\bm{\nu}$ repeatedly against the transition kernel $p$ (cf. \Cref{defn:aug_to_underlying}). This allows us to define the best response map $\Phi^{\rm aug} : \Pcal(\mathcal{Y})^{\infty} \to \mathcal{A}_{\rm DM} $ and the measure flow map $\Psi^{\rm aug}: \mathcal{A}_{\rm DM} \to \Pcal(\mathcal{Y})^{\infty}$ as
\begin{align*}
\Phi^{\mathrm{aug}}(\bm{\nu}) &= \left\{ \hat{\pi} \in \mathcal{A}_{\rm DM}:  J_{\bm{\mu}^{\nu}}( \hat{\pi}) = \sup_{\pi \in \mathcal{A}_{\rm DM}} J_{\bm{\mu}^{\nu}}(\pi) \right\},\\
\Psi^{\mathrm{aug}}(\pi)_{t+1}(\cdot) &=\int_{\mathcal{Y}\times U} p_y\Big(\cdot \mid y, u, \bm{\mu}_t^{\Psi^{\mathrm{aug}}(\pi)} \Big) \pi_t(\de u \mid y) \Psi^{\mathrm{aug}}(\pi)_t(\de y),
\end{align*}
where $U \coloneqq A \times \mathcal{I}$ and $p_y$ denote the transition kernel in the augmented space (cf. \eqref{aug_transition}), so that for the MCDM, a MFNE is defined to be a fixed point of the map $\Psi^{\rm aug} \circ \Phi^{\rm aug}$ (cf. \Cref{defn:mfne_mcdm}).\\

Our result concerns a regularised MFG of the MCDM, where we show that the fixed point iteration for the regularised game is contractive, therefore establishing the uniqueness of a regularised MFNE. This requires the addition of a Kullback–Leibler (KL) divergence term in the value function. For a reference measure $q \in \Pcal(U)$, measure flow $\bm{\nu} \in \Pcal(\mathcal{Y})^{\infty}$ and regularisation parameter $\eta >0$ we consider the regularised objective
\begin{align*}
    J_{\eta, \bm{\nu}}(\pi) =  \E^{\pi}\left[\sum^{\infty}_{t=0} \gamma^{t}\left( r_y \big(y_t, u_t, \bm{\mu}^{\nu}_t\big) - \eta D_{KL}(\pi_t \Vert q) \right) \right],
\end{align*}
where $r_y$ represents the reward formulated in the augmented space (cf.\ \eqref{aug_reward}), and in particular includes the observation costs. In the regularised game, for a fixed measure flow $\bm{\nu} \in \Pcal(\mathcal{Y})^{\infty}$, the optimal policy is given by the softmax policy
 \begin{align*}
        \pi^{\rm soft}_t(\de u \mid y) \coloneqq \frac{\exp\left(  Q^{*}_{\eta,\bm{\nu}}(t,y,u)/\eta\right)}{\int_U \exp\left( Q^{*}_{\eta,\bm{\nu}}(t,y,\pr{u})/\eta\right) q(\de \pr{u})} q(\de u),
    \end{align*}
where $ Q^{*}_{\eta,\bm{\nu}}$ is the associated $Q$-function for $J_{\eta, \bm{\nu}}$ (see \eqref{regularised_Q_func}). Thus, by defining the regularised best-response map to output the softmax policy associated to the measure flow $\bm{\nu}$ (cf. \Cref{defn:regularised MFNE-MCDM}), we have the following result. 
\begin{thm}
    The fixed point operator $\Psi^{\mathrm{aug}} \circ \Phi^{\mathrm{reg}}$ is a contraction mapping on the Banach space $(\Delta^{\infty}_{\tilde{\mathcal{Y}}}, \delta_{\infty})$, provided that $\zeta> C_{p,r}$ in the metric \eqref{eq_flow_metric}, and $\eta > K_{\zeta, p, r}$. Here, the constant $C_{p,r}$ depends only on the bounds and Lipschitz constants of the transition kernel $p$ and reward function $r$, and $K_{\zeta, p, r}$ depends in addition on the chosen value of $\zeta$. Hence, by Banach's fixed point theorem, there exists a unique fixed point for $\Psi^{\mathrm{aug}} \circ \Phi^{\mathrm{reg}}$, which is a regularised MFNE.
\end{thm}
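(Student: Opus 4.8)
The plan is to verify the hypotheses of Banach's fixed point theorem for $\Psi^{\mathrm{aug}}\circ\Phi^{\mathrm{reg}}$ on $(\Delta^{\infty}_{\tilde{\mathcal{Y}}},\delta_{\infty})$. Completeness of the space follows from the completeness of $(\Pcal(\mathcal{Y}),\delta_{TV})$ recorded in \Cref{sec:notation}, together with the observation that $\delta_{\infty}$ is a weighted $\ell^{1}$ metric on a countable product of such spaces, once $\Delta^{\infty}_{\tilde{\mathcal{Y}}}$ is checked to be a closed subset. Note that, in contrast to the unregularised best-response correspondence $\Phi^{\mathrm{aug}}$, the regularised map $\Phi^{\mathrm{reg}}$ is single-valued, returning the unique softmax policy $\pi^{\mathrm{soft}}$, so that the composition is a genuine self-map of $\Delta^{\infty}_{\tilde{\mathcal{Y}}}$. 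I would then factor the contraction estimate through the two constituent maps: for flows $\bm{\nu},\bm{\hat{\nu}}$ I bound
\begin{align*}
 \delta_{\infty}\big(\Psi^{\mathrm{aug}}(\Phi^{\mathrm{reg}}(\bm{\nu})),\,\Psi^{\mathrm{aug}}(\Phi^{\mathrm{reg}}(\bm{\hat{\nu}}))\big)
 \;\le\; L_{\Psi}\,\delta_{\mathcal{A}}\big(\Phi^{\mathrm{reg}}(\bm{\nu}),\Phi^{\mathrm{reg}}(\bm{\hat{\nu}})\big)
 \;\le\; L_{\Psi}\,L_{\Phi}\,\delta_{\infty}(\bm{\nu},\bm{\hat{\nu}}),
\end{align*}
where $L_{\Psi}=L_{\Psi}(\zeta,p,r)$ is a Lipschitz constant for $\Psi^{\mathrm{aug}}$ and $L_{\Phi}=L_{\Phi}(\zeta,\eta,p,r)$ one for $\Phi^{\mathrm{reg}}$, and then arrange for the product to be strictly below one.

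For $\Phi^{\mathrm{reg}}$ I would proceed in three stages. First, establish that the map $\bm{\nu}\mapsto\bm{\mu}^{\nu}$ sending an augmented flow to the induced flow on the underlying state space is Lipschitz in $\delta_{\infty}$; this is the step forced by the partial-observation structure, and I expect it to be the main obstacle (discussed below). Second, show that the regularised $Q$-function $Q^{*}_{\eta,\bm{\nu}}$ depends Lipschitz-continuously on $\bm{\mu}^{\nu}$: since the regularised Bellman operator is a $\gamma$-contraction in the sup norm and its soft (log-sum-exp) value map is $1$-Lipschitz in $Q$ uniformly in $\eta$, a discounted-sum perturbation argument, combined with the boundedness and Lipschitz-in-measure assumptions on $p$ and $r$ and with \Cref{prop: maxmin_ineq} for the transition term, yields $\lVert Q^{*}_{\eta,\bm{\nu}}(t,\cdot)-Q^{*}_{\eta,\bm{\hat{\nu}}}(t,\cdot)\rVert_{\infty}\le \sum_{s\ge t}\gamma^{s-t}\,C\,\delta_{TV}(\mu^{\nu}_{s},\mu^{\hat{\nu}}_{s})$ with $C$ independent of $\eta$. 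Third, the softmax map $Q\mapsto\pi^{\mathrm{soft}}$ is Lipschitz with constant proportional to $1/\eta$: differentiating the normalised exponential and invoking \Cref{mean_value_theorem} gives $\sup_{y}\delta_{TV}(\pi^{\mathrm{soft}}_{t},\hat{\pi}^{\mathrm{soft}}_{t})\le (c/\eta)\,\lVert Q^{*}_{\eta,\bm{\nu}}(t,\cdot)-Q^{*}_{\eta,\bm{\hat{\nu}}}(t,\cdot)\rVert_{\infty}$. Chaining the three estimates, and summing against the weights $\zeta^{-t}$, produces $L_{\Phi}=O(1/\eta)$, which is exactly why strong regularisation is required.

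For $\Psi^{\mathrm{aug}}$ I would argue from its defining forward recursion for $\Psi^{\mathrm{aug}}(\pi)_{t+1}$ in terms of $\Psi^{\mathrm{aug}}(\pi)_{t}$, $\pi_{t}$ and $\bm{\mu}^{\Psi^{\mathrm{aug}}(\pi)}_{t}$. Writing $\nu=\Psi^{\mathrm{aug}}(\pi)$ and $\hat{\nu}=\Psi^{\mathrm{aug}}(\hat{\pi})$, the triangle inequality splits $\delta_{TV}(\nu_{t+1},\hat{\nu}_{t+1})$ into three contributions controlled by the bounds and Lipschitz constants of $p_{y}$: one from $\delta_{TV}(\nu_{t},\hat{\nu}_{t})$, one from the policy gap $\sup_{y}\delta_{TV}(\pi_{t},\hat{\pi}_{t})$, and one from the measure gap $\delta_{TV}(\mu^{\nu}_{t},\mu^{\hat{\nu}}_{t})$ (itself Lipschitz in the flow by the first stage above). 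Multiplying by $\zeta^{-(t+1)}$ and summing over $t$, the self-propagation through the $\nu_{t}$-term contributes a factor of order $C_{p,r}/\zeta$; imposing $\zeta>C_{p,r}$ keeps this below one, so the geometric recursion closes and furnishes a finite $L_{\Psi}=L_{\Psi}(\zeta,p,r)$ independent of $\eta$.

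Combining the two estimates, the overall Lipschitz constant is $L_{\Psi}(\zeta,p,r)\,L_{\Phi}(\zeta,\eta,p,r)$ with $L_{\Phi}=O(1/\eta)$; since $L_{\Psi}$ is finite once $\zeta>C_{p,r}$, one chooses $\eta>K_{\zeta,p,r}$ so large that the product is strictly less than one, and Banach's theorem then delivers the unique fixed point $\bm{\nu}^{*}$, which is the regularised MFNE. The principal difficulty is the first stage of the $\Phi^{\mathrm{reg}}$ analysis. Because the observation kernel depends on the state distribution across the whole delay window, $\mu^{\nu}_{t}$ is obtained by integrating $\nu$ repeatedly against $p(\cdot\mid\cdot,\cdot,\mu^{\nu}_{s})$ over the delay period, which simultaneously couples several time slices of $\nu$ and re-inserts $\mu^{\nu}$ itself inside the kernel. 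I would resolve this by unwinding the construction one delay step at a time, bounding each kernel composition with \Cref{prop: maxmin_ineq} and the Lipschitz-in-measure hypothesis on $p$, and then absorbing the self-reference through the same $\zeta^{-t}$-weighted summation, so that $\bm{\nu}\mapsto\bm{\mu}^{\nu}$ is again Lipschitz with finite constant whenever $\zeta>C_{p,r}$.
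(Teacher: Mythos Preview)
Your plan is essentially the paper's own: factor $\Psi^{\mathrm{aug}}\circ\Phi_{\eta}$ into the two maps, establish a Lipschitz constant for each (\Cref{cor:lip_best_response} for $\Phi_{\eta}$ via the chain $\bm{\nu}\mapsto\bm{\mu}^{\nu}\mapsto Q^{*}_{\eta,\bm{\nu}}\mapsto\pi^{\mathrm{soft}}$, and \Cref{prop:lip_measure_flow} for $\Psi^{\mathrm{aug}}$ via the forward recursion), and take $\eta$ large so the product drops below one.

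Two points where your sketch diverges from the paper are worth flagging. First, your ``principal difficulty'' is smaller than you expect: the map $\nu_t\mapsto\bm{\mu}^{\nu}_t$ in \Cref{defn:aug_to_underlying} is \emph{local in the time index} $t$---it uses only $\nu_t$, not neighbouring slices of the flow---so no $\zeta^{-t}$-summation or condition on $\zeta$ is needed there. The self-reference you describe (the kernel $p$ depending on $\mu^{\nu}$) is across delay values $d_0,\ldots,d_K$ at a fixed $t$, and is dispatched by a finite induction on $d$ (\Cref{lem:nu_to_mu}), giving a constant $L_M$ independent of $\zeta$. The constraint $\zeta>C_{p,r}$ enters only in the $\Psi^{\mathrm{aug}}$ estimate. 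Second, for the $Q$-function the paper does \emph{not} use your discounted-sum bound $\sum_{s\ge t}\gamma^{s-t}\,\delta_{TV}(\mu^{\nu}_s,\mu^{\hat{\nu}}_s)$; instead it runs the induction in the truncation horizon $T$ (via the second identity in \Cref{prop:dpp_truncated_q}) and obtains the time-local estimate $|Q^{*}_{\eta,\bm{\nu}}(t,\cdot)-Q^{*}_{\eta,\hat{\bm{\nu}}}(t,\cdot)|\le l_{\eta}\,\delta_{TV}(\nu_t,\hat{\nu}_t)$ of \Cref{lem:qregstar_lip}. This matters because your bound, once multiplied by $\zeta^{-t}$ and summed, produces terms of size $\gamma^{s}$ rather than $\zeta^{-s}$, and for the large $\zeta$ the theorem requires one typically has $\gamma\zeta>1$, so the comparison with $\delta_{\infty}(\bm{\nu},\hat{\bm{\nu}})=\sum_s\zeta^{-s}\delta_{TV}(\nu_s,\hat{\nu}_s)$ does not close. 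The paper's time-local bound sidesteps this mismatch entirely; if you keep your version you will need an additional argument to reconcile the $\gamma$- and $\zeta$-weights.
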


In particular, the threshold for $\eta$ is lower when the discount factor $\gamma$ is high or the transition kernel's dependence on the population distribution is low, i.e., for small values of $L_p$. The precise statement of the theorem above with more explicit thresholds for $\zeta$ and $\eta$ is given in \Cref{thm:contraction_reg_mcdm}. Finally, we show that a regularised MFNE does indeed give an approximate Nash equilibrium for the finite player game. In the finite player game, each player's policy depends also on the empirical distribution of the other players. In a game with $N$ players, the objective for player $n$ is given by
\begin{align*}
    J^N_n(\pi) \coloneqq \E^{\pi}_{q_0}\left[ \sum^{\infty}_{t=0} \gamma^{t} \big(r(x^n_t, a^n_t, e^N_t) - \sum^K_{k=1} c_k \mathbf{1}_{\{i^n_t = k\}} \big) \right],\ \pi = (\pi^1, \ldots, \pi^N) \in \mathcal{A},
\end{align*}
where $\mathcal{A} = \prod^N_{n=1} \mathcal{A}^n$ denotes the space of admissible policies for the $N$ players (cf. \Cref{defn:policy_nplayer}). A regularised MFNE with sufficiently small regularisation is then an approximate Nash equilibrium for a finite player game with sufficiently larger number of players. We state this more precisely in the theorem below.

\begin{thm}
    Let $(\eta_j)_j$ be a sequence with $\eta_j \downarrow 0$. For each $j$, let $(\pi^{(j)}_*,  \bm{\nu}^{(j)}_*) \in \mathcal{A}_{\rm DM} \times \Pcal(\mathcal{Y})^{\infty}$ be the associated regularised MFNE, defined via \Cref{defn:regularised MFNE-MCDM}, for the MCDM-MFG. Then for any $\varepsilon > 0$, there exists $\pr{j}, \pr{N} \in \N$ such that for all $j\geq\pr{j}$ and $N \geq \pr{N}$, the policy $(\pi^{(j)}_*, \ldots, \pi^{(j)}_*)$ is $\varepsilon$-Nash for the $N$-player game. That is,
    \begin{align*}
        J^N_n(\pi^{(j)}_*, \ldots, \pi^{(j)}_*) \geq \sup_{\pi \in \mathcal{A}_{\rm DM}} J^N_n(\pi, \pi^{(j), -n}_*) - \varepsilon \quad \mbox{for all $n \in \{1, \ldots, N\}$},
    \end{align*}
    where $\pi^{(j), -n}_*$ represents the policy $\pi^{(j)}_*$ being applied to all players except player $n$.
\end{thm}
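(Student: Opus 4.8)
The plan is to bound, uniformly in the player index $n$, the exploitability of the symmetric profile. Define for player $n$
\[
\mathcal{E}^N_n(j) := \sup_{\pi \in \mathcal{A}_{\rm DM}} J^N_n(\pi, \pi^{(j), -n}_*) - J^N_n(\pi^{(j)}_*, \ldots, \pi^{(j)}_*),
\]
so that the claim is precisely $\mathcal{E}^N_n(j) \leq \varepsilon$ for $j,N$ large. Write $\bm{\mu}^{(j)} := \bm{\mu}^{\nu^{(j)}_*}$ for the underlying-state flow attached to the equilibrium flow $\bm{\nu}^{(j)}_*$ via \Cref{defn:aug_to_underlying}. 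Inserting the mean-field quantities $\sup_{\pi} J_{\bm{\mu}^{(j)}}(\pi)$ and $J_{\bm{\mu}^{(j)}}(\pi^{(j)}_*)$, I would split
\begin{align*}
\mathcal{E}^N_n(j)
&= \underbrace{\Big[\sup_{\pi} J^N_n(\pi, \pi^{(j), -n}_*) - \sup_{\pi} J_{\bm{\mu}^{(j)}}(\pi)\Big]}_{(\mathrm{I})}
 + \underbrace{\Big[\sup_{\pi} J_{\bm{\mu}^{(j)}}(\pi) - J_{\bm{\mu}^{(j)}}(\pi^{(j)}_*)\Big]}_{(\mathrm{II})} \\
&\quad + \underbrace{\Big[J_{\bm{\mu}^{(j)}}(\pi^{(j)}_*) - J^N_n(\pi^{(j)}_*, \ldots, \pi^{(j)}_*)\Big]}_{(\mathrm{III})},
\end{align*}
and bound each term separately.

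Term $(\mathrm{II})$ is the mean-field Nash gap along the equilibrium flow, and I would control it through the regularisation. By definition of \Cref{defn:regularised MFNE-MCDM}, $\pi^{(j)}_*$ is the softmax best response to $\bm{\nu}^{(j)}_*$, hence it maximises $J_{\eta_j, \bm{\nu}^{(j)}_*}(\pi) = J_{\bm{\mu}^{(j)}}(\pi) - \eta_j R(\pi)$, where $R(\pi) := \E^{\pi}\big[\sum_t \gamma^t D_{KL}(\pi_t \Vert q)\big] \geq 0$ and I identify augmented and underlying objectives by \Cref{prop:equivalence_aug}. Testing this optimality against an optimal policy $\pi^*$ for $J_{\bm{\mu}^{(j)}}$ yields $J_{\bm{\mu}^{(j)}}(\pi^{(j)}_*) \geq J_{\bm{\mu}^{(j)}}(\pi^*) - \eta_j R(\pi^*)$, so $(\mathrm{II}) \leq \eta_j R(\pi^*)$. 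Since the action space is finite and $q$ has full support, $R$ is bounded by $C_R := (1-\gamma)^{-1}\log\big(1/\min_u q(u)\big)$ uniformly over policies and flows, whence $(\mathrm{II}) \leq \eta_j C_R \to 0$ as $j \to \infty$, uniformly in $N$ and $n$. (In the compact Polish case one instead tests against a $\delta$-optimal policy with density bounded relative to $q$, giving $(\mathrm{II}) \leq \delta + \eta_j C_\delta$.)

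Terms $(\mathrm{I})$ and $(\mathrm{III})$ are the finite-player versus mean-field discrepancies, and I would control them by propagation of chaos in the finite augmented space $\mathcal{Y}$, where each player's dynamics are Markov by \Cref{prop:equivalence_aug}. Coupling the $N$-player system with independent copies of the mean-field dynamics shows that, when all players (resp.\ all but the deviating player $n$) follow $\pi^{(j)}_*$, the empirical flow $e^N$ stays close to $\bm{\mu}^{(j)}$, i.e.\ $\sup_t \E[\delta_{TV}(e^N_t, \mu^{(j)}_t)] \to 0$ as $N \to \infty$. Combining this with the Lipschitz dependence of $p$ and $r$ on the measure argument (the standing hypotheses of \Cref{thm:contraction_reg_mcdm}), with \Cref{prop: maxmin_ineq}, and with the Lipschitz continuity of the augmented-to-underlying map $\bm{\nu}\mapsto\bm{\mu}^{\nu}$, both terms are bounded by $C_{p,r}(1-\gamma)^{-1}\sup_t \E[\delta_{TV}(e^N_t,\mu^{(j)}_t)] + O(1/N)$; a single deviating player perturbs the empirical measure only by $O(1/N)$, so taking $\sup_{\pi}$ in $(\mathrm{I})$ — and using $|\sup f - \sup g| \le \sup|f-g|$ — is harmless. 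Because the state space is finite, these bounds depend only on $p,r,\gamma$ and are therefore uniform in $j$.

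Finally I would combine the estimates: first choose $\pr{j}$ so that $(\mathrm{II}) \leq \varepsilon/3$ for all $j \geq \pr{j}$, which is possible since the bound on $(\mathrm{II})$ is uniform in $N,n$; then, with $\pr{j}$ fixed, choose $\pr{N}$ so that $(\mathrm{I}) + (\mathrm{III}) \leq 2\varepsilon/3$ for all $N \geq \pr{N}$, uniformly in $j$ and $n$ by exchangeability. The main obstacle is the propagation-of-chaos step: one must verify that the empirical flow concentrates \emph{uniformly over the whole sequence} $(\pi^{(j)}_*)_j$ while simultaneously accommodating the controlled-delay augmentation, the $O(1/N)$ effect of the single deviator on the coupled dynamics, and the fact that transitions and rewards feel the underlying flow only through the non-linear map $\bm{\nu}\mapsto\bm{\mu}^{\nu}$. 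The Lipschitz continuity of that map, already established for \Cref{thm:contraction_reg_mcdm}, is exactly what transfers the augmented-space law of large numbers to the underlying-space payoffs.
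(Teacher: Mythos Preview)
Your three-term decomposition $(\mathrm{I})+(\mathrm{II})+(\mathrm{III})$ is exactly the structure the paper uses. For $(\mathrm{I})$ and $(\mathrm{III})$ the paper simply cites \cite[Theorem~4.10]{saldi_mfg_discrete} rather than sketching propagation of chaos; your sketch and their citation are doing the same job, and your remark that uniformity in $j$ comes from the Lipschitz constants of $p,r$ (not from the particular policy) is the right observation.

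The genuine difference is in term $(\mathrm{II})$. You argue variationally: since $\pi^{(j)}_*$ maximises $J_{\bm{\mu}^{(j)}}-\eta_j R$, testing against an unregularised optimiser and using $R\ge 0$ gives $(\mathrm{II})\le \eta_j R(\pi^*)\le \eta_j C_R$ with $C_R=(1-\gamma)^{-1}\log(1/\min_u q(u))$. This is correct and, when $U$ is finite, both shorter and sharper than what the paper does. The paper instead proves a substantial auxiliary result (\Cref{lemma_uniform_convergence}): the $Q$-function $Q^{\Phi_\eta(\bm{\nu})}_{\bm{\nu}}$ of the softmax policy converges to $Q^{*}_{\bm{\nu}}$ uniformly in $(y,u,\bm{\nu})$ as $\eta\downarrow 0$, via a five-step argument combining Laplace's method, Dini's theorem, concentration of the softmax measure around the argmax, and Arzel\`a--Ascoli. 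What the paper's longer route buys is that it works verbatim in the compact Polish setting: your parenthetical smoothing trick for that case is plausible, but making $C_\delta$ uniform over the sequence $(\bm{\mu}^{(j)})_j$ (the optimal $\pi^{*,(j)}$ changes with $j$, and a Dirac optimum has infinite KL from a non-atomic $q$) would require an additional compactness/equicontinuity argument not far from what \Cref{lemma_uniform_convergence} already provides.
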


\section{Control of information speed: single agent case}\label{sec:single_agent}

We first state the definition of a Markov controllable delay model (MCDM) below, which characterises the scenarios where agents can control their information delay.

\begin{defn}\label{defn:mcdm}
    A Markov controllable delay model (MCDM) is a tuple $\langle \mathcal{X}, A, \mathcal{D}, \mathcal{C}, p, r \rangle$, where
    \begin{itemize}
        \item the \textit{state space} $\mathcal{X}$ and the \textit{action space} $A$ are non-emoty Polish spaces, equipped with their respective Borel sigma fields;
        \item the set of \textit{delay values} is $\mathcal{D} = \{ d_0, \ldots, d_K\} \subset \mathbb Z_+$, with $ 0 \leq d_K < \ldots < d_0$;
        \item the set of \textit{cost values} is $\mathcal{C} = \{ c_0, c_1, \ldots, c_K\} \subset \R_+$, with $0 = c_0 < c_1 \ldots < c_K $;
        \item the \textit{transition kernel} is a map $p: \mathcal{X} \times A \to \Pcal(\mathcal{X})$;
        \item the \textit{reward function} $r: \mathcal{X} \times A \to [0, \infty)$ is bounded and measurable.
    \end{itemize}
\end{defn}

We will make the following additional assumption for the rest of the paper. 

\begin{ass}\label{ass_compacct}
    The state space $\mathcal{X}$ and the action space $A$ are compact, the reward function $r$ is continuous, and the transition kernel $p$ is weakly continuous, i.e. if $(x_n, a_n) \to (x,a) \in \mathcal{X} \times A$, then ${p(\cdot\mid x_n, a_n) \to p(\cdot \mid x,a)}$ weakly.
\end{ass}

\Cref{defn:mcdm} mimics that of a standard MDP, but with the addition of delay and cost values. In the following, we assume that the problem initiates at time $t=0$, and denote prior observations with negative indices.   

\begin{defn}\label{defn:history sets}
    Denote by $\mathcal{I} \coloneqq \{0,1,\ldots, K\}$ the \textit{intervention set}. Let $H_{0} \coloneqq (\mathcal{X} \times A)^{d_0} \times \mathcal{X}$, and define recursively $H_t \coloneqq H_{t-1} \times A \times \mathcal{I} \times \mathcal{X}$, $t\geq 1$. A \textit{policy} $\pi = (\pi_t)_{t \geq 0}$ is a sequence of (Markov) kernels $\pi_t: H_t \to \Pcal(A \times \mathcal{I})$.
    \end{defn}

The intervention set $\mathcal{I}$ represents an agent's control over their delay in an MCDM. We will restrict our admissible policies to those that depend exclusively on the delayed observations, and the history of actions and interventions taken.

\begin{defn}\label{defn:mcdm_admissible}
Let $h_t = (x_{-d_0}, a_{-d_0}, \ldots, x_0, a_0, i_0, \ldots, a_{t-1}, i_{t-1}, x_t) \in H_t$. Define the sequence $(\tilde{\iota}_t)_t$ recursively by $\tilde{\iota}_{t+1} = \min\{\tilde{\iota}_{t} + 1, d_{i_{t}}\}$, $\tilde{\iota}_0 = d_0$. A policy $\pi$ is \textit{admissible for an MCDM} if at each time $t$, there exists kernels $\phi_t: \mathcal{X}^{t +1}  \times A^{t+d_0} \times \mathcal{I}^t \to \Pcal(A \times \mathcal{I})$, such that for each $h_t \in H_t$,
    \begin{align*}
       \pi_t(\cdot \mid h_t) = \phi_t(\cdot \mid x_{0-d_0}, x_{1 - \tilde \iota _1},\ldots, x_{t-{\tilde{\iota}_t}}, a_{-d_0}, \ldots, a_{t-1}, \tilde\iota_0,\ldots, \tilde\iota_{t-1}).
    \end{align*}
   The set of admissible policies for the MCDM is denoted by $\mathcal{A}_{\rm DM}$.
\end{defn}
	With an admissible policy $\pi \in \mathcal{A}_{\rm DM}$ and an initial distribution $q_0 \in \Pcal(H_0)$, we can construct a probability measure $\PP^{\pi}_{q_0}$ on $H_{\infty} \coloneqq H_0 \times (A \times \mathcal{I} \times \mathcal{X})^{\infty}$ with the Ionescu--Tulcea theorem \cite[Appendix C]{hernandez1989adaptive}. This is the unique probability measure such that  for $\omega = (x_{-d_0}, a_{-d_0}, \ldots, x_{0}, a_{0}, i_{0}, \ldots) \in H_{\infty}$,
\begin{align*}
    \PP^{\pi}_{q_0}(\de \omega)  & = q_0(\de h_0)\ p(\de x_{-d_0+1} \mid x_{-d_0}, a_{-d_0})\cdots p(\de x_{0} \mid x_{-1}, a_{-1})\\
    & \qquad \pi_{0}(\de a_{0}, \de i_{0} \mid h_0)\ p(\de x_{1} \mid x_{0}, a_{0})\ \pi(\de a_1, \de i_1 \mid h_1)\cdots
\end{align*}
The objective function for the infinite horizon problem with discounted cost is then
\begin{align}\label{eq:objective_single_agent}
    J(\pi) \coloneqq \E^{\pi}_{q_0}\left[ \sum^{\infty}_{t=0} \gamma^{t} \left( r(x_t, a_t) -  \sum^K_{k=1} c_k \mathbf{1}_{\{i_t=k\}} \right)\right],
\end{align}
where $\E^{\pi}_{q_0}$ is the expectation over the measure $\PP^{\pi}_{q_0}$, and $\gamma \in (0,1)$ is the discount factor. If the problem starts at some time $t>0$ instead, we write $\E^{\pi}_{t,q_0}$ for the expectation. If $q_0 = \delta_{h_0}$, then we write $\E^{\pi}_{h_0}$ (resp. $\E^{\pi}_{t,h_0}$) in place of $\E^{\pi}_{\delta_{h_0}}$ (resp. $\E^{\pi}_{t,\delta_{h_0}}$).

The above setup provides the following interpretation of the sequential evolution of an MCDM. At time $t$, the controller observes the underlying state $x_{t-d} \in \mathcal{X}$ for some $d \in [d_K, d_0]$, with knowledge of their applied actions $a_{t-d}, \ldots, a_{t-1} \in A$. Based on this information, the controller applies an action $a_t$ and receives a reward $r(x_t, a_t)$, which we assume not to be observable until $x_t$ becomes observable. The controller then chooses a value for $i_t$, which determines their cost $c_{i_t}$, as well as their next delay period of $d_{i_t}$ units. This process then repeats at the next time. To ensure that the setup is sensible in its interpretation, if $i_t = k$ and the current delay $d$ is shorter than $d_k$, then the delay at time $t+1$ will simply be extended to $d+1$ units (in reality, paying a higher cost for a longer delay is clearly sub-optimal, so such a choice of $i_t$ would not practically occur). \Cref{fig:MCDM_evol} depicts a typical evolution of an MCDM.

\begin{figure}[t!]
\centering
    \includegraphics[scale=0.225]{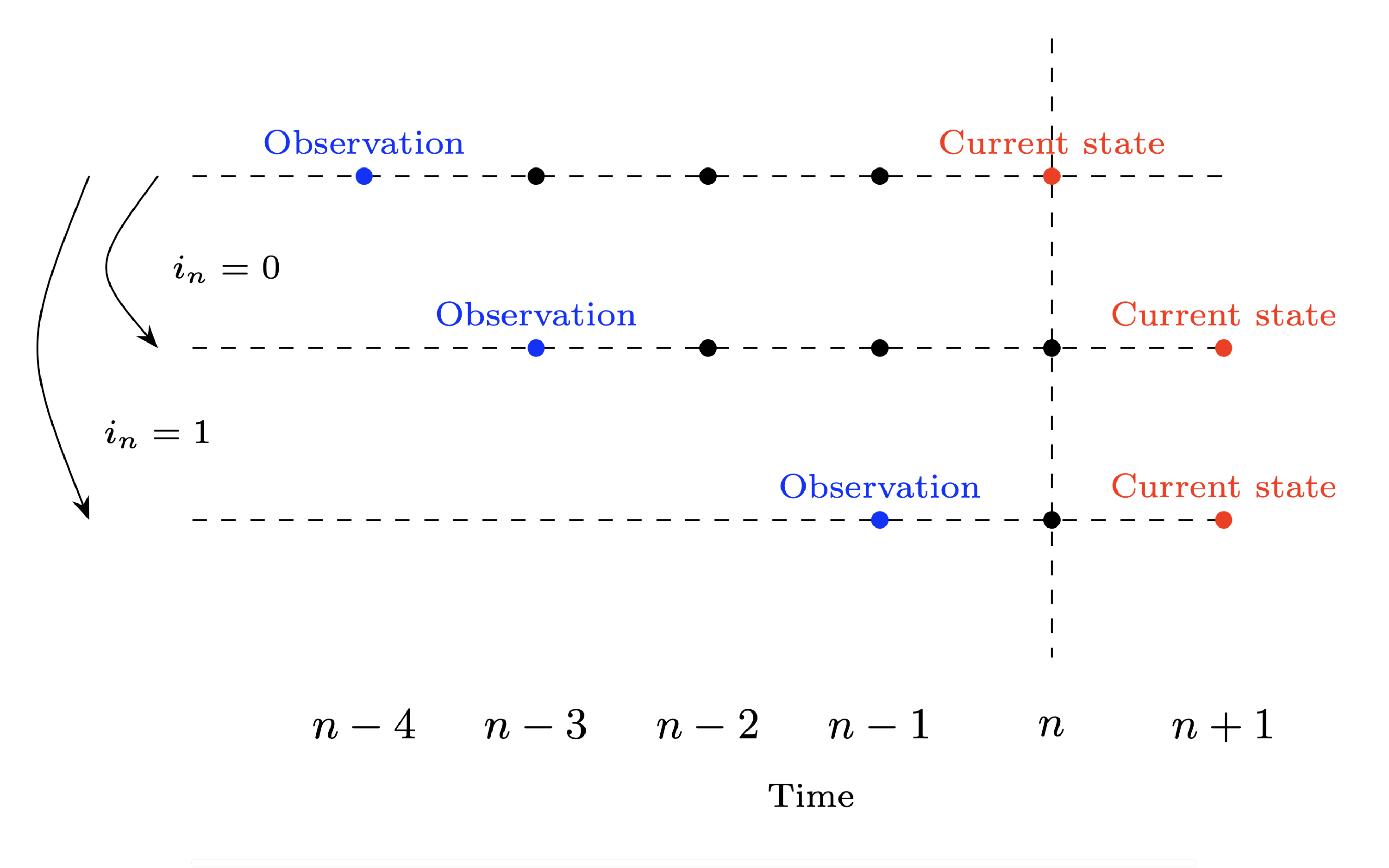}
    \caption{An example of an MCDM with possible observation delays being $d_0 = 4$ and $d_1 =2$. Top: the unobservable current state (red) at time $n$, and the delayed observable state (blue) that occured at time $n-4$. Middle and bottom: depictions of the observation delay at time $n+1$ for two alternative choices of $i_n$. \vspace{1em}
    \label{fig:MCDM_evol}}
    {}
\end{figure}

Since the control problem arising from an MCDM is non-Markovian in its current form, we shall establish an equivalence with a fully observable MDP, so that we can apply dynamic programming. As noted in \cite{altman_nain}, this is akin to solving a POMDP by constructing an equivalent MDP on the belief state. Yet, here the information basis for our decision problem is not described by observations being noisy functions of the underlying state being obtained concurrently. Instead, it is given by the last delayed state we had actively chosen to observe in the past, at some cost,  
complemented by our actions taken since then. 

\begin{defn}\label{defn:augmented_space}
    For $d_K \leq d \leq d_0$, let $\mathcal{Y}_{d} \coloneqq \mathcal{X} \times A^{d}$, equipped with the product topology. Define the \textbf{augmented space} $\mathcal{Y}$ by
    \begin{align*}
        \mathcal{Y} \coloneqq \bigcup_{d=d_K}^{d_0} \left( \{d\} \times \mathcal{X} \times  A^d \right) = \bigsqcup^{d_0}_{d=d_K}\mathcal{Y}_d,
    \end{align*}
    where $\bigsqcup$ denotes the disjoint union of sets. The augmented space $\mathcal{Y}$ is equipped with the disjoint union topology, which is metrizable by
    \begin{align*}
        \delta_Y(y, \pr{y}) = \begin{cases}
            \delta_d(y, \pr{y}) \wedge 1 , \quad &y, \pr{y} \in  \mathcal{Y}_{d},\ d_K \leq d \leq d_0,\\
            1 , & y\in \mathcal{Y}_d,\ \pr{y} \in \mathcal{Y}_{\pr{d}},\ d \neq \pr{d},
        \end{cases}
    \end{align*}
    where $\delta_d$ is any metric that induces the product topology on $\mathcal{Y}_d$.
\end{defn}
Thus the augmented variable includes the delay of the system at the current time, the underlying state that is observed with that delay, and the actions applied from that moment up to the present. We shall write an element $y \in \mathcal{Y}$ in the form
$(d, x, a_{-d}, \ldots, a_{-1}),\ \mbox{or } (d, x, (a)^{-1}_{-d})$, where negative indices are used to indicate that the actions had occurred in the past. If specific indices are not required, we will also use the notation $y=( d,x, \bm{a})$. The augmented MDP is given by the tuple $\langle \mathcal{Y}, A \times \mathcal{I}, p_y, r_y \rangle$, where the kernel ${p_y: \mathcal{Y} \times (A \times \mathcal{I}) \to \Pcal(\mathcal{Y}})$ is defined by
\begin{align}\label{eq:augmented_kernel}
    p_y(\de \hat{y} \mid y, \tilde{a}, i ) & \coloneqq \begin{cases} p^{(d-d_i+1)}\big(\de \hat{x} \mid x, (a)_{-d}^{-d_i}\big)\otimes \delta_{\{ (a_{-d_i+1},\ldots, a_{-1}, \tilde{a}) \}}(\de  \hat{\bm{a}}) \otimes \delta_{d_i}(\hat{d}) , & \ d_i \leq d \leq d_0;\\
     \delta_{x}(\de \hat{x}) \otimes 
    \delta_{(a_{-d}, \ldots, a_{-1}, \tilde{a})}(\de \hat{\bm{a}}) \otimes \delta_{d+1}(\hat{d}) ,& \ d_K \leq d< d_i, \end{cases}
\end{align}
where $\de \hat{y} =  \sqcup_{\hat{d}}\ \de\hat{x} \de \hat{\bm{a}}$, $y = (d, x, \bm{a}) \in \mathcal{Y}$, $\tilde{a} \in A$, $i \in \mathcal{I}$, and $p^{(d-d_i+1)}$ is the $(d-d_i+1)$--step transition kernel. Similarly, the augmented reward $r_y: \mathcal{Y} \times (A \times \mathcal{I}) \to [0,\infty)$ is defined by
\begin{align*}
    r_y(y, \tilde{a}, i) \coloneqq \int_{ \mathcal{X}} r(\pr{x}, \tilde{a}) p^{(d)}(\de \pr{x} \mid x, \bm{a})  - \sum^K_{k=1} c_k \mathbf{1}_{\{i=k\}}.
\end{align*}

\begin{defn}\label{def:aug_pol}
    Let $\pr{H}_0 \coloneqq \mathcal{Y}$ and $\pr{H}_t \coloneqq \pr{H}_{t-1} \times A \times \mathcal{I} \times \mathcal{Y}$ for $t \geq 1$. An element $\pr{h}_t = (y_0, a_0, i_0, \ldots, y_t) \in \pr{H}_t$ is \textit{consistent} if for any $y_s = (d, x_{s-d}, (a)^{s-1}_{s-d})$ and $y_t = (\pr{d}, x_{t-\pr{d}}, (a)^{t-1}_{t-\pr{d}})$, it holds that $a_{s-k} = a_{t-l}$ if $s-k=t-l$, and $x_{s-d} = x_{t-\pr{d}}$ if $s-d = t- \pr{d}$. We denote the set of consistent elements of $\pr{H}_t$ by $\mathcal{H}_t$. A policy $\pr{\pi} = (\pr{\pi}_t)$ for the augmented MDP is then a sequence of kernels $\pr{\pi}_t: \mathcal{H}_t \to \Pcal(A \times \mathcal{I})$, with the set of policies for the augmented MDP denoted by $\mathcal{A}_{\mathrm{aug}}$.
\end{defn}

Given a policy $\pr{\pi} \in \mathcal{A}_{\mathrm{aug}}$ and initial distribution $\tilde{q}_0 \in \Pcal(\mathcal{Y})$, we can obtain a unique probability measure $\PP^{\pr{\pi}}_{\tilde{q}_0}$ via the Ionescu--Tulcea theorem again. Note that by construction all the history sequences induced by $p_y$ and $\pr{\pi}_t$ are consistent (cf. \Cref{def:aug_pol}), so that the support of $\PP^{\pr{\pi}}_{\tilde{q}_0}$ is contained within the consistent elements of $\pr{H}_{\infty} = \pr{H}_0 \times (A \times \mathcal{I} \times \mathcal{Y} )^{\infty}$. We then define the objective for the augmented MDP by 
\begin{align*}
     J_{\mathrm{aug}}(\pr{\pi}) \coloneqq \E^{\pr{\pi}}_{\tilde{q}_0}\left[ \sum^{\infty}_{t=0} \gamma^{t} r_y(y_t, a_t, i_t) \right],
\end{align*}
where $\E^{\pr{\pi}}_{\tilde{q}_0}$ is the expectation over $\PP^{\pr{\pi}}_{\tilde{q}_0}$.

\begin{prop}\label{prop:equivalence_aug}
    Given an initial distribution $\tilde{q}_0 \in \Pcal(\mathcal{Y})$ on the augmented MDP, let $q_0 \in \Pcal(\mathcal{X})$ be an initial distribution for the MCDM given by
    \begin{align*}
        q_0(\cdot) \coloneqq \int_{\mathcal{Y}} p^{(d)}(\cdot \mid x, \bm{a}) \tilde{q}_0(\de y),\quad y = (d,x ,\bm{a}).
    \end{align*}
    Then solving the augmented MDP is equivalent to solving the MCDM problem, and it holds that
\begin{align*}
    \sup_{\pi \in \mathcal{A}_{\rm DM}} J(\pi) = \sup_{\pr{\pi} \in \mathcal{A}_{\mathrm{aug}}} J_{\mathrm{aug}}(\pr{\pi}).
\end{align*}
\end{prop}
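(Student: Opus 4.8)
The plan is to prove the two values coincide by constructing a value-preserving correspondence between admissible MCDM policies and augmented policies, resting on the single structural fact that, under the \emph{true} MCDM dynamics, the augmented variable process $(y_t,a_t,i_t)_t$ is a controlled Markov chain governed precisely by the kernel $p_y$ of \eqref{eq:augmented_kernel}. First I would fix an admissible $\pi\in\mathcal{A}_{\rm DM}$ together with the MCDM initialised consistently with $\tilde q_0$ (so that the current state $x_0$ carries the law $q_0$ of the statement), and from each history $h_t$ read off the augmented variable $y_t=(\tilde\iota_t,x_{t-\tilde\iota_t},a_{t-\tilde\iota_t},\ldots,a_{t-1})$, noting that the effective delay $\tilde\iota_t$ is a deterministic function of the past interventions $i_0,\ldots,i_{t-1}$ and of $d_0$.

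The core step, which I expect to be the main obstacle, is to verify that under $\PP^{\pi}_{q_0}$ the conditional law of $y_{t+1}$ given the entire past $(y_0,a_0,i_0,\ldots,y_t,a_t,i_t)$ equals $p_y(\cdot\mid y_t,a_t,i_t)$, depending on the past only through $(y_t,a_t,i_t)$. This computation splits along the two regimes of \eqref{eq:augmented_kernel}: when $d_{i_t}\le\tilde\iota_t$ the observation advances to $x_{(t+1)-d_{i_t}}$, whose conditional law given $x_{t-\tilde\iota_t}$ and the intervening actions $a_{t-\tilde\iota_t},\ldots,a_{t-d_{i_t}}$ is exactly the $(\tilde\iota_t-d_{i_t}+1)$-step kernel by the Markov property of $p$; when $d_{i_t}>\tilde\iota_t$ the observation point is frozen, the delay grows by one, and the state part is the Dirac mass at $x_{t-\tilde\iota_t}$. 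Making the index bookkeeping and the two delay regimes fully rigorous, and confirming that the Markov property of $p$ is precisely what removes the dependence on states observed earlier than $x_{t-\tilde\iota_t}$, is the delicate part.

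Granting this, I would define the induced augmented policy $\pr{\pi}_t(\cdot\mid y_0,a_0,i_0,\ldots,y_t)\coloneqq\PP^{\pi}_{q_0}\big((a_t,i_t)\in\cdot\mid y_0,\ldots,y_t\big)$, a legitimate element of $\mathcal{A}_{\mathrm{aug}}$ since it is adapted to the (consistent) augmented history; by Ionescu--Tulcea the law of $(y_t,a_t,i_t)_t$ under $\PP^{\pi}_{q_0}$ then coincides with $\PP^{\pr{\pi}}_{\tilde q_0}$. For the objective I would invoke the tower property together with the identity $r_y(y_t,a_t,i_t)+\sum_{k}c_k\mathbbm{1}_{\{i_t=k\}}=\int_{\mathcal{X}}r(x',a_t)\,p^{(\tilde\iota_t)}(\de x'\mid x_{t-\tilde\iota_t},\bm{a}_t)=\E^{\pi}_{q_0}[r(x_t,a_t)\mid y_t,a_t]$, where the $t=0$ term is exactly where the initial relation $q_0(\cdot)=\int_{\mathcal{Y}}p^{(d)}(\cdot\mid x,\bm{a})\,\tilde q_0(\de y)$ is needed to initialise the conditional law of $x_0$ correctly. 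Discounting and summing yields $J(\pi)=J_{\mathrm{aug}}(\pr{\pi})$, hence $\sup_{\mathcal{A}_{\rm DM}}J\le\sup_{\mathcal{A}_{\mathrm{aug}}}J_{\mathrm{aug}}$.

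For the reverse inequality I would run the correspondence backwards: given $\pr{\pi}\in\mathcal{A}_{\mathrm{aug}}$, compose it with the measurable map $h_t\mapsto(y_0,a_0,i_0,\ldots,y_t)$ to obtain an MCDM policy, check it is admissible in the sense of \Cref{defn:mcdm_admissible} (it factors through the observed states and past actions, with the delay superscript $\tilde\iota_t$ carrying the intervention dependence), and apply the same law-matching and tower-property identity to get $J_{\mathrm{aug}}(\pr{\pi})=J(\pi)$, so that $\sup_{\mathcal{A}_{\mathrm{aug}}}J_{\mathrm{aug}}\le\sup_{\mathcal{A}_{\rm DM}}J$. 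Combining the two inequalities gives the claimed equality. Beyond the kernel-matching computation, the point to be careful about is that $y_t$ is a genuine sufficient statistic, i.e.\ that neither the states observed-then-superseded during a delay reduction nor the full intervention history carry any value beyond what $y_t$ records; this is guaranteed precisely by the controlled-Markov-chain property established above, so no separate dynamic-programming argument is required.
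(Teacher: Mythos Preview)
Your proposal is correct and follows essentially the same route as the paper's proof, which rests on the one-to-one correspondence between MCDM histories $h_t\in H_t$ and consistent augmented histories $\pr{h}_t\in\mathcal{H}_t$, then transfers policies via $\pr{\pi}_t(\cdot\mid\pr{h}_t)\coloneqq\pi_t(\cdot\mid h_t)$ and asserts $J(\pi)=J_{\mathrm{aug}}(\pr{\pi})$ directly. The difference is one of detail rather than strategy: the paper's proof is quite terse (citing an argument from \cite{hernandez1989adaptive}) and leaves the verification that the augmented process under $\PP^{\pi}_{q_0}$ really evolves according to $p_y$, and that the per-step rewards match via the tower property, to the reader; you have made these two steps explicit, which is exactly where the substance of the equivalence lies.
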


\begin{proof}
    We follow a similar argument to that in \cite[p.89]{hernandez1989adaptive}. Each $h_t \in H_t$ can be mapped to a corresponding $\pr{h}_t \in \mathcal{H}_t$ via
\begin{align*}
    H_t \ni (x_{-d_0}, a_{-d_0}, \ldots, x_{0}, a_{0}, i_{0}, \ldots, x_t) \rightarrow (y_0, a_0, i_0, \ldots, y_{t-1}, a_{t-1}, i_{t-1}, y_t) \in \mathcal{H}_t,
\end{align*}
where $y_t = (\tilde{\iota}_t, x_{t-\tilde{\iota}_t}, (a)_{t-\tilde{\iota}_t}^{t-1})$ for $t \geq 0$ ($\tilde{\iota}_t$ is as defined in \Cref{defn:mcdm_admissible}). Then, given a policy $\pi \in \mathcal{A}_{\rm DM}$, one can define a policy $\pr{\pi}\in\mathcal{A}_{\mathrm{aug}}$ via
\begin{align}\label{eq:equiv_pol}
     \pr{\pi}_{t}(\cdot \mid \pr{h}_t) \coloneqq \pi_t(\cdot \mid h_t), \quad t \geq 0.
\end{align}
Conversely, each $\pr{h}_t \in \mathcal{H}_t$ corresponds to a sequence $(x_{0-d_0}, \ldots, x_{t-{\tilde{\iota}_t}}, a_{-d_0}, \ldots, a_{t-1}, \tilde\iota_0,\ldots, \tilde\iota_{t-1})$. Therefore, given a policy $\pr{\pi}\in\mathcal{A}_{\mathrm{aug}}$, one can define an admissible policy $\pi \in \mathcal{A}_{\rm DM}$ by constructing the corresponding kernels $\phi_t$ via
 \begin{align*}
        \phi_t(\cdot \mid x_{0-d_0}, \ldots, x_{t-{\tilde{\iota}_t}}, a_{-d_0}, \ldots, a_{t-1}, \tilde\iota_0,\ldots, \tilde\iota_{t-1}) \coloneqq \pr{\pi}_t(\cdot \mid \pr{h}_t).
\end{align*}
Moreover, the policies $\pi$ and $\pr{\pi}$ assign the same conditional probability on $A \times \mathcal{I}$, given any $h_t \in H_t$ alongside the corresponding $\pr{h}_t \in \mathcal{H}_t$. Therefore, by the definition of the objective functions and the relation \eqref{eq:equiv_pol}, we have $J(\pi) = J_{\rm aug}(\pr{\pi})$, and by the equivalence of the set of policies, we have 
\begin{align*}
    \sup_{\pi \in \mathcal{A}_{\rm DM}} J(\pi) = \sup_{\pr{\pi} \in \mathcal{A}_{\mathrm{aug}}} J_{\mathrm{aug}}(\pr{\pi})
\end{align*}
as required.
\end{proof}

Given the equivalence of policies, we shall write $\mathcal{A}_{\rm DM}$ to represent the set of admissible policies without loss of generality. Since the augmented MDP is fully observable and Markovian, we can establish dynamic programming to solve for the MCDM.

\begin{prop}
Let $v : \mathcal{Y} \to \R$ be the value function
\begin{align*}
    J^{*}(y) \coloneqq \sup_{\pi \in \mathcal{A}_{\rm DM}} \E^{\pi}_{y}\left[ \sum^{\infty}_{t=0} \gamma^{t} r_y(y_t, a_t, i_t) \right],
\end{align*}
then $J^{*}$ satisfies the dynamic programming equation
    \begin{align*}
        J^{*}(y) = \max_{(\tilde{a},i) \in A \times \mathcal{I}} \left\{ \int_{ \mathcal{X}} r(\pr{x}, \tilde{a}) p^{(d)}(\de \pr{x} \mid x, \bm{a}) - \sum^{K}_{k=1} c_k \mathbf{1}_{\{i = k\}} + \gamma \int_{ \mathcal{Y}}J^{*}(\pr{y}) p_y( \de \pr{y}\mid y, \tilde{a}, i)  \right\},
    \end{align*}
    where $p_y$ is the augmented kernel as in \eqref{eq:augmented_kernel}. Moreover, the optimal policy is of feedback form (with respect to the augmented space).
\end{prop}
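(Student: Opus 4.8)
The plan is to realise $J^{*}$ as the unique fixed point of the Bellman operator on the Banach space of bounded continuous functions, and then to identify that fixed point with the value function by the usual policy-value comparison. Since the augmented model $\langle \mathcal{Y}, A \times \mathcal{I}, p_y, r_y \rangle$ is fully observed and Markovian by \Cref{prop:equivalence_aug}, and since $\mathcal{Y}$ is a finite disjoint union of the compact sets $\mathcal{Y}_d = \mathcal{X} \times A^d$ (hence itself compact) while $A \times \mathcal{I}$ is compact ($A$ compact, $\mathcal{I}$ finite), the problem sits inside the classical framework for discounted MDPs with compact state and action spaces. Throughout, $r_y$ is bounded by $\lVert r \rVert_{\infty} + c_K$, so all candidate value functions are uniformly bounded by $(\lVert r\rVert_\infty + c_K)/(1-\gamma)$.

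First I would establish the two regularity facts that drive everything: (i) $r_y$ is bounded and continuous on $\mathcal{Y} \times (A \times \mathcal{I})$, and (ii) $p_y$ is weakly continuous. For (i), I would show by induction that the $n$-step kernels $p^{(n)}$ inherit weak continuity from $p$, using that composing a weakly continuous kernel with a continuous bounded integrand preserves continuity; this makes $(x,\bm{a},\tilde{a}) \mapsto \int_{\mathcal{X}} r(\pr{x},\tilde{a})\, p^{(d)}(\de \pr{x}\mid x,\bm{a})$ continuous, and the cost term depends only on the discrete variable $i$. For (ii), I would inspect the two cases in \eqref{eq:augmented_kernel}: the delay $d$ and the index $i$ are discrete, so under the disjoint-union topology it suffices to verify continuity within each component $\mathcal{Y}_d$, where $p_y$ is a product of the weakly continuous kernel $p^{(d-d_i+1)}$ (or an identity/shift in $x$) with Dirac masses depending continuously on the action arguments.

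Next I would define the Bellman operator $T$ on $C_b(\mathcal{Y})$ by $(Tv)(y) = \sup_{(\tilde{a},i)} \{ r_y(y,\tilde{a},i) + \gamma \int_{\mathcal{Y}} v(\pr{y})\, p_y(\de \pr{y} \mid y, \tilde{a}, i) \}$ and argue via Berge's maximum theorem that the supremum is attained (so it is a maximum) and that $Tv \in C_b(\mathcal{Y})$: by steps (i)--(ii) and continuity of $v$ the bracketed integrand is jointly continuous in $(y,\tilde{a},i)$, and the feasible set $A \times \mathcal{I}$ is compact and independent of $y$, so the value is continuous and the argmax correspondence is nonempty, compact-valued, and upper hemicontinuous. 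That $T$ is a $\gamma$-contraction in $\lVert \cdot \rVert_{\infty}$ is the standard one-line estimate, since integration against a probability kernel is nonexpansive. Banach's fixed point theorem then yields a unique $v^{*} \in C_b(\mathcal{Y})$ with $v^{*} = Tv^{*}$, which is precisely the asserted dynamic programming equation.

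Finally I would identify $v^{*} = J^{*}$ and extract the optimal feedback policy. Upper hemicontinuity and compact-valuedness of the argmax correspondence furnish, by the Kuratowski--Ryll-Nardzewski measurable selection theorem, a measurable selector $(\tilde{a}^{*}(y), i^{*}(y))$, defining a stationary feedback policy $\pi^{*}$. For an arbitrary admissible policy the Bellman equation gives $r_y(y_t,a_t,i_t) + \gamma\,\E[v^{*}(y_{t+1}) \mid \mathcal{F}_t] \le v^{*}(y_t)$; iterating, using boundedness of $v^{*}$ and $\gamma^{n} \to 0$, yields $\E^{\pi}_y[\sum_t \gamma^t r_y(y_t,a_t,i_t)] \le v^{*}(y)$ for every $\pi$, hence $J^{*}(y) \le v^{*}(y)$. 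Running the same iteration with equalities along $\pi^{*}$ gives the reverse bound, so $v^{*} = J^{*}$ and $\pi^{*}$ is optimal in feedback form. The main obstacle is the regularity step: verifying weak continuity of the augmented kernel $p_y$ through its case-split definition and the disjoint-union topology, and transferring this via Berge's theorem into continuity of the value and existence of the maximiser; everything downstream is standard MDP contraction and verification theory.
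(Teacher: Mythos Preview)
Your proposal is correct and follows exactly the approach the paper takes: the paper's proof consists of a single sentence observing that the augmented problem is a fully observed MDP and citing a standard reference (Theorem~4.2.3 in Hern\'andez-Lerma) for the dynamic programming equation. Your write-up simply unpacks that standard argument in detail---verifying the weak continuity of $p_y$ and continuity of $r_y$, invoking Berge's maximum theorem, establishing the $\gamma$-contraction, and carrying out the verification step with a measurable selector---so there is nothing to compare beyond the level of explicitness.
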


\begin{proof}
    As the equivalent augmented formulation is now a fully observable problem, the dynamic programming equation follows from a standard argument for the case of a fully observable MDP, see e.g. \cite[Theorem 4.2.3]{hernandezlerma_1}.
\end{proof}

\begin{rem}\label{rem_homo}
    When considering the MFG in the next section, a deterministic measure flow representing the population distribution introduces an implicit time dependence within the transition kernel and reward. The generic single agent problem in the definition of the MFNE then becomes time-inhomogeneous. The time-homogeneous setup in this section readily generalises directly to a setup with time-inhomogeneous transition kernels, rewards and dynamic programming equations. However, for ease of exposition we choose to present the MCDM under the time-homogeneous setting here.
\end{rem}

\section{Mean-field games with control of information speed}\label{sec:MFG}

{In the remainder of the paper, we let $U$ be the product space $A \times \mathcal{I}$ and we write $u = (a,i) \in U$.} We adopt the following definition of the MCDM, to take into account of the interacting affects of agents in the game.
\begin{defn}\label{defn:mcdm_mf_ver}
    An MCDM with mean-field interaction is a tuple $\langle \mathcal{X}, A, \mathcal{D}, \mathcal{C}, p, r \rangle$, where
    \begin{itemize}
        \item the \textit{state space} $\mathcal{X}$ and the \textit{action space} $A$ are a Polish spaces, equipped with their respective Borel sigma fields (and suitable complete metrics);
        \item the set of \textit{delay values} is $\mathcal{D} = \{ d_0, \ldots, d_K\} \subset \mathbb Z_+$, with $ 0 \leq d_K < \ldots < d_0$;
        \item the set of \textit{cost values} is $\mathcal{C} = \{ c_0, c_1, \ldots, c_K\}\subset \R_+$, with $0 = c_0 < c_1 \ldots < c_K$;
        \item the \textit{transition kernel} is a map $p: \mathcal{X} \times A \times \Pcal(\mathcal{X}) \to \Pcal(\mathcal{X})$;
        \item the \textit{reward function} $r: \mathcal{X} \times A \times \Pcal(\mathcal{X})\to [0, \infty)$ is bounded and measurable.
    \end{itemize}
\end{defn}

For the Polish spaces $\mathcal{X}$ and $A$ which are separable, we fix compatible metrics $\delta_{\mathcal{X}}$ and $\delta_A$ respectively, such that $\mathcal{X}$ and $A$ are complete when equipped with $\delta_{\mathcal{X}}$ and $\delta_A$ respectively.
For any $t > 1$, the $t$--step transition kernel is a function $p^{(t)}: \mathcal{X} \times A^t \times \Pcal(\mathcal{X})^t \to \Pcal(\mathcal{X})$, defined recursively by the relation: for $x_0 \in \mathcal{X}$, $a_0,\ldots, a_{t-1} \in A$, $\mu_0, \ldots, \mu_{t-1} \in \Pcal(\mathcal{X})$,
\begin{align}\label{eq:n_step_kernel_w/measure}
    &p^{(t)}(\de x_t \mid x_0,(a)_0^{t-1}, (\mu)_0^{t-1}) \coloneqq p(\de x_t\mid x_{t-1}, a_{t-1}, \mu_{n-1})\ p^{(t-1)}(\{x_{t-1}\} \mid x_0,(a)_0^{t-2}, (\mu)_0^{t-2}).
\end{align}

We shall also frequently make use of the following lemma in subsequent proofs.

\begin{lemma}\label{lem:dominating_measure}
    For any fixed $\mu \in \Pcal(\mathcal{X})$, there exists a measure $q \in \Pcal(\mathcal{X})$ such that $p(\cdot\mid x, a,\mu)$ is absolutely continuous with respect to $q$ for all $x \in \mathcal{X}$ and $a \in A$. 
\end{lemma}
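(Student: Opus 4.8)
The plan is to produce $q$ as a renormalisation of a common dominating reference measure for the family $\mathcal{F}_\mu \coloneqq \{ p(\cdot \mid x, a, \mu) : (x,a) \in \mathcal{X}\times A \}$, with $\mu$ held fixed throughout. The real content of the lemma is not the existence of pointwise densities but the existence of a \emph{single} $q$ working simultaneously for every $(x,a)$; I would extract this from the structural (density) hypothesis on $p$, under which each $p(\cdot\mid x,a,\mu)$ is absolutely continuous with respect to one $\sigma$-finite reference measure $m$ on $(\mathcal{X},\mathcal{B}(\mathcal{X}))$. Granting such an $m$, the remaining task is purely measure-theoretic: replace $m$ by an equivalent probability measure.

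Concretely, $\sigma$-finiteness of $m$ yields a measurable partition $\mathcal{X}=\bigsqcup_{n\ge1}E_n$ with $0<m(E_n)<\infty$ (disjointifying a countable finite-measure cover and discarding the $m$-null pieces), and I set
\[
 q(\cdot) \coloneqq \sum_{n\ge1} 2^{-n}\,\frac{m(\cdot\cap E_n)}{m(E_n)} \in \Pcal(\mathcal{X}).
\]
Then $q$ and $m$ are mutually absolutely continuous: $q(B)=0$ forces $m(B\cap E_n)=0$ for every non-null $n$, hence $m(B)=0$, and the converse is immediate. Consequently $p(\cdot\mid x,a,\mu)\ll m$ together with $m$ equivalent to $q$ gives $p(\cdot\mid x,a,\mu)\ll q$ for all $(x,a)\in\mathcal{X}\times A$, which is the claim. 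When $\mathcal{X}$ is finite this collapses to taking any $q$ of full support, for which domination is automatic.

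The main obstacle is exactly the existence of the common dominating $m$, and this is where a structural assumption on $p$ is indispensable: weak continuity together with the compactness of \Cref{ass_compacct} does \emph{not} by itself furnish a common dominating measure. On $\mathcal{X}=[0,1]$, for example, the kernel $p(\cdot\mid x)=\delta_x$ is weakly continuous, yet the uncountable family $\{\delta_x\}_{x\in[0,1]}$ of mutually singular measures admits no dominating probability measure. Hence the argument cannot proceed from \Cref{ass_compacct} alone; once the assumed density/common-reference structure of $p$ supplies $m$, the normalisation above closes the proof. This common $q$ is precisely what later permits expressing the total variation distance between transition kernels through the density formula recalled in the preliminaries.
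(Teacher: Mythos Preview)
Your argument has a genuine gap: it essentially assumes what is to be proved. You posit a common $\sigma$-finite reference measure $m$ dominating every $p(\cdot\mid x,a,\mu)$, then normalise it; but the existence of such an $m$ \emph{is} the content of the lemma, so the normalisation step is the trivial part. You correctly observe that weak continuity plus compactness (\Cref{ass_compacct}) cannot supply $m$, as your $\delta_x$ example shows, but you then stop and appeal to an unstated ``density/common-reference structure'' rather than locating the hypothesis that actually does the work.

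The missing ingredient is \Cref{assumption_lipschitz_p_r}(a), which gives that $(x,a)\mapsto p(\cdot\mid x,a,\mu)$ is Lipschitz, hence continuous, in the \emph{total variation} metric (not merely weakly). This is precisely what rules out your counterexample, since $\delta_{TV}(\delta_x,\delta_y)=1$ for $x\neq y$. The paper exploits this: the image $\{p(\cdot\mid x,a,\mu):(x,a)\in\mathcal{X}\times A\}$ is then the continuous image of a compact set, hence compact in $(\Pcal(\mathcal{X}),\delta_{TV})$, hence separable. Taking a countable $\delta_{TV}$-dense subset $\{\rho_n\}$ and setting $q=\sum_n 2^{-n}\rho_n$, one checks that $q(C)=0$ forces $\rho_n(C)=0$ for all $n$, and density in total variation (unlike in the weak topology) then propagates this to $p(C\mid x,a,\mu)=0$ for every $(x,a)$. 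That is the substantive step your proposal lacks.
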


\begin{proof}
    Since $\mathcal{X}$ and $A$ is compact, and the map $(x,a) \mapsto p(\cdot \mid x,a,\mu)$ is continuous with respect to $\delta_{TV}$, the set $(p(\cdot \mid x,a,\mu))_{x\in\mathcal{X},a\in A}$ is compact in $\Pcal(\mathcal{X})$ under $\delta_{TV}$, and hence separable. Let $\{\rho_n\}_n$ be a corresponding dense set of measures, and define
    \begin{align*}
        q(C) = \sum^{\infty}_{n=1} \frac{1}{2^n} \rho_n(C),\quad C \in \mathcal{B}(\mathcal{X}).
    \end{align*}
    Clearly $q(C)= 0$ implies $\rho_n(C) = 0$ for all $n$. Since $\{\rho_n\}$ is dense, this in turn implies ${p(C \mid x,a,\mu) = 0}$ for all $x \in \mathcal{X}$ and $a \in A$, as required.
\end{proof}

\subsection{Finite agent game with observation delay}\label{subsec:nplayer}

We consider an $N$-player game with mean-field interaction, where each agent can control their observation delay. Take a tuple $\langle \mathcal{X}, A, \mathcal{D}, \mathcal{C}, p, r \rangle$ as in \Cref{defn:mcdm_mf_ver}. Let $x^n_t$ and $a^n_t$ be the state and action taken by the $n$\textsuperscript{th} player at time $t$ respectively. Then the $n$\textsuperscript{th} player receives a reward of $r(x^n_t, a^n_t, e^N_t)$ at time $t$, where $e^N_{t} = \frac{1}{N} \sum^N_{k=1} \delta_{x^k_{t}}$ is the empirical distribution of the states of the agents. The player's next transition from state $x^n_t$ is then determined by the transition kernel
$p( \de x^n_{t+1}  \mid x^n_{t}, a^n_{t}, e^N_{t})$.

\begin{defn}\label{defn:policy_nplayer}
    A policy $\pi = (\pi^1, \ldots, \pi^N)$ for the $N$-player MCDM is a sequence of maps $\pi^n_t: (H_t)^N \to \Pcal(U)$ for $1 \leq n \leq N$, $t \geq 0$ (cf. \Cref{defn:history sets}). $\pi$ is admissible if at each time $t$, there exists kernels $\phi^{j}_t: \mathcal{X}^{t +1}  \times A^{t+d_0} \times \mathcal{I}^t \times \Pcal(\mathcal{Y})^{t+1}\to \Pcal(U)$, such that for each $h_t \in (H_t)^N$,
    \begin{align*}
         \pi^n_t(\cdot \mid h_t) = \phi^{n}_t(\cdot \mid x^n_{-d_0}, \ldots, x^n_{t-{\tilde{\iota}^n_t}}, a^n_{-d_0}, \ldots, a^n_{t-1}, \tilde \iota^n_0, \ldots, \tilde\iota^n_t,  \tilde{e}^N_0, \ldots, \tilde{e}^N_t),
    \end{align*}
    where $\tilde{\iota}^n_t$ is the delay for the $n$\textsuperscript{th} player at time $t$ (cf. \Cref{defn:mcdm_admissible}), and $\tilde{e}^N_t$ is the empirical distribution of the augmented state, i.e.
    \begin{align*}
        \tilde{e}^N_t = \frac{1}{N}\sum^N_{k=1} \delta_{y^k_t}(\cdot), \quad y^k_t = (\tilde{\iota}^k_t, x^k_{t-\tilde{\iota}^k_t}, a^k_{t-\tilde{\iota}^k_t}, \ldots, a^k_{t-1}).
    \end{align*}
    The set of admissible policies for $N$-players is denoted by $\mathcal{A}$, and the set of admissible policies for player $n$ is denoted by $\mathcal{A}^n$, so that $\mathcal{A} = \prod^N_{n=1} \mathcal{A}^n$.
\end{defn}
Given $q_0 \in \Pcal(\mathcal{Y})^N$, player $n$'s objective is given by
\begin{align*}
    J^N_n(\pi) \coloneqq \E^{\pi}_{q_0}\left[ \sum^{\infty}_{t=0} \gamma^{t} \left(r(x^n_t, a^n_t, e^N_t) - \sum^K_{k=1} c_k \mathbf{1}_{\{i^n_t = k\}} \right) \right],\ \pi = (\pi^1, \ldots, \pi^N) \in \mathcal{A}.
\end{align*}
If the other players' strategies are considered fixed, then we can consider $J^N_n$ as a function of the policy of player $n$ only, and we write $J^N_n(\pi^n, \pi^{-n})$, where $\pi^{-n} = (\pi^1, \ldots, \pi^{n-1}, \pi^{n+1}, \ldots, \pi^N)$. The notion of optimality in the $N$-player game can be captured by the concept of a Nash equilibrium, which intuitively says that, at equilibrium, no player can make gains by deviating from their current strategy, provided that all other players remain at their strategy.

\begin{defn}
$\pi_{*} \in \mathcal{A}$ is a Nash equilibrium for the $N$-player MCDM if for each $n \in \{1, \ldots, N\}$,
\begin{align*}
    J^N_n(\pi_{*}) = \sup_{\pi^n \in \mathcal{A}^n} J^N_n(\pi^n, \pi_{*}^{-n}),
\end{align*}
For $\varepsilon>0$, a policy $\pi \in \mathcal{A}$ is an $\varepsilon$-Nash equilibrium for the $N$-player MCDM if for each $n \in \{1, \ldots, N \}$,
\begin{align*}
    J^N_n(\pi) \geq \sup_{\pi^n \in \mathcal{A}^n} J^N_n(\pi^n, \pi^{-n}) - \varepsilon.
\end{align*}
\end{defn}

In general, the Nash equilibrium is hard to characterise and computationally intractable. It is also impractical to search over policies that depend on the distribution of all players. Therefore, it is more useful to consider a search over Markovian policies for each player, and formulate the equilibrium condition with respect to such policies. As such, the common approach for modelling partially observable games is to consider Markovian policies as above \cite{saldi2019approximate}. This is a reasonable assumption as in practice it will be hard for each agent to keep track of the movement of all other players when the number of players is large. A policy is \textit{Markovian} if $\pi = (\pi_t)_{t \geq 0}$ is such that $\pi_t: \mathcal{Y}^N \to (\Pcal(U))^N$. Let $\mathcal{A}_{\mathrm{mrkv}}$ denote the set of Markovian policies, and $ \mathcal{A}_{\mathrm{mrkv}}^n$ for the set of Markovian policies for player $n$, so that $\mathcal{A}_{\mathrm{mrkv}} = \prod^N_{n=1} \mathcal{A}_{\mathrm{mrkv}}^n$. 

\begin{defn}
$\pi_{*} \in \mathcal{A}_{\mathrm{mrkv}}$ is a Markov--Nash equilibrium for the $N$-player MCDM if for each $n \in \{1, \ldots, N\}$,
\begin{align*}
    J^N_n(\pi_{*}) = \sup_{\pi^n \in \mathcal{A}_{\mathrm{mrkv}}^n} J^N_n(\pi^n, \pi_{*}^{-n}),
\end{align*}
For $\varepsilon>0$, a policy $\pi \in \mathcal{A}_{\mathrm{mrkv}}$ is an $\varepsilon$-Markov--Nash equilibrium for the MCDM if for each $n \in \{1, \ldots, N \}$,
\begin{align*}
    J^N_n(\pi) \geq \sup_{\pi \in \mathcal{A}_{\mathrm{mrkv}}^n} J^N_n(\pi^n, \pi^{-n}) - \varepsilon.
\end{align*}
\end{defn}

\subsection{Mean-field game with observation delay}

The computation and characterisation of (Markov--)Nash equilibria is typically intractable due to the curse of dimensionality and the coupled dynamics across the different agents. Therefore, as an approximation, we consider the infinite population limit by sending the number of players ${N \to \infty}$, and replace the empirical distribution of the agents with a measure flow ${\bm{\mu} = (\mu_t)_t \in \Pcal(\mathcal{X})^{\infty}}$. We consider the viewpoint of one representative agent, and assume that its interactions with members of the population, modelled by the measure flow $\bm{\mu}$, are symmetric.\\

Let $\langle \mathcal{X}, A, \mathcal{D}, \mathcal{C}, p, r \rangle$ be an MCDM with mean-field interaction. Let $\bm{\mu} \in \Pcal(\mathcal{X})^{\infty}$ be a fixed measure flow. At time $t$, a representative agent transitions from the state $x_t$ under the transition kernel $p(\de x_{t+1} \mid x_t, a_t, \mu_t)$, and collects a reward of $r(x_t, a_t, \mu_t)$. We impose the following additional assumptions on the MCDM-MFG for the rest of the paper.
\begin{ass}\label{assumption_lipschitz_p_r}
\begin{itemize}
    \item[(a)]There exists a constant $L_p$ such that for all $x, \hat{x} \in \mathcal{X}$, $a, \hat{a} \in A$, ${\mu, \hat{\mu} \in \Pcal(\mathcal{X}})$,
    \begin{align*}
       \delta_{TV}\left( p(\cdot \mid x, a, \mu),\  p(\cdot \mid \hat{x}, \hat{a},\hat{\mu}) \right) \leq L_p ( \delta_{\mathcal{X}}(x, \hat{x}) + \delta_A(a,\hat{a}) + \delta_{TV}( \mu , \hat{\mu}) ).
    \end{align*}
    \item[(b)] There exists a constant $L_r$ such that for all $x, \hat{x} \in \mathcal{X}$, $a, \hat{a} \in A$, ${\mu, \hat{\mu} \in \Pcal(\mathcal{X}})$,
    \begin{align*}
        \lvert r(x, a, \mu) - r(\hat{x}, \hat{a}, \hat{\mu}) \rvert \leq L_r ( \delta_{\mathcal{X}}(x, \hat{x}) + \delta_A(a,\hat{a}) + \delta_{TV}( \mu , \hat{\mu}) ).
    \end{align*}
    Moreover, $r$ is bounded by a constant $M_r > 0$.
\end{itemize}
\end{ass}

Given a fixed measure flow $\bm{\mu}\in\Pcal(\mathcal{X})^{\infty}$ and initial condition $q_0 \in \Pcal(H_0)$, a representative player faces a similar single agent problem as in \Cref{sec:single_agent}: maximise the objection function $J_{\bm{\mu}} (\pi)$ over the set of admissible policies $\mathcal{A}_{\rm DM}$ (cf. \Cref{defn:mcdm_admissible}), where
\begin{align*}
    J_{\bm{\mu}} (\pi) = \E^{\pi}_{q_0}\left[ \sum^{\infty}_{t=0} \gamma^{t} \left( r(x_t, a_t, \mu_t) -  \sum^K_{k=1} c_k \mathbf{1}_{\{i_t=k\}} \right)\right].
\end{align*}

By \Cref{prop:equivalence_aug}, we can solve the representative player MCDM by solving an equivalent full information MDP on the augmented space $\mathcal{Y} \coloneqq \bigcup_{d=d_K}^{d_0} ( \{ d \} \times \mathcal{X} \times A^d )$. In particular, given $y=(d, x, (a)^{t-1}_{t-d})$, $u = (a_t, i)$, and ${\bm{\mu} = (\mu)^{t-d_K}_{t-d_0}}$, the augmented kernel ${p_y: \mathcal{Y} \times U \times \Pcal(\mathcal{X})^{d_0-d_K+1} \to \Pcal(\mathcal{Y}})$ is defined by
\begin{align}\label{aug_transition}
    &\ p_y(\de \hat{y} \mid y, \tilde{a}, i ) \nonumber\\
     \coloneqq & \begin{cases} p^{(d-d_i+1)}\big(\de \hat{x} \mid x, (a)_{t-d}^{t-d_i}, (\mu)^{t-d_i}_{t-d}\big)\otimes \delta_{\{ (a_{t-d_i+1},\ldots, a_{t},) \}}(\de  \hat{\bm{a}}) \otimes \delta_{d_i}(\hat{d}) , & \ d_i \leq d \leq d_0;\\
     \delta_{x}(\de \hat{x}) \otimes 
    \delta_{(a_{t-d}, \ldots, a_{t})}(\de \hat{\bm{a}}) \otimes \delta_{d+1}(\hat{d}) ,& \ d_K \leq d< d_i, \end{cases}
\end{align}
where $\de \hat{y} = \sqcup_{\hat{d}}\ \de\hat{x} \de \hat{\bm{a}}$, and the augmented reward $r_y: \mathcal{Y} \times U \times \Pcal(\mathcal{X})^{d_0+1} \to [0,\infty)$ is defined by
\begin{align}\label{aug_reward}
    r_y(y, u, \bm{\mu}) \coloneqq \int_{\mathcal{X}} r(\pr{x}, a_t, \mu_t)\ p^{(d)}(\de \pr{x} \mid x, \bm{a}, (\mu)^{t-1}_{t-d}) - \sum^K_{k=1} c_k \mathbf{1}_{\{i=k\}}.
\end{align}
Given \Cref{assumption_lipschitz_p_r}, we have the following bounds for the $t$-step transition kernel $p^{(t)}$ and augmented reward $r_y$.
\begin{prop}\label{prop:lip_cont}
    Under \Cref{assumption_lipschitz_p_r}:
    \begin{itemize}
    \item[(a)] The $t$--step transition kernels are continuous with respect to $\delta_{TV}$. Moreover, it is Lipschitz in the $\mu$ argument: for all $x \in \mathcal{X}$, $(a)^{t-1}_0 \in A^t$ and ${(\mu)^{t-1}_0, (\hat{\mu})^{t-1}_0 \in \Pcal(\mathcal{X}})^t$,
    \begin{align*}
        \delta_{TV}\left( p^{(t)}(\cdot \mid x, (a)^{t-1}_0, (\mu)^{t-1}_0),\ p^{(t)}(\cdot \mid x, (a)^{t-1}_0, (\hat{\mu})^{t-1}_0) \right) \leq  t L_p\ \delta_{\mathrm{max}}((\mu)^{t-1}_0,  (\hat{\mu})^{t-1}_0).
    \end{align*}
    \item[(b)] Let $L_P = d_0 L_p$ and $L_R= L_r +2 M_r L_P$. For all $y \in \mathcal{Y}$, $u \in U$ and ${\bm{\mu}, \hat{\bm{\mu}}\in \Pcal(\mathcal{X}})^{d_0 + 1}$,
    \begin{align*}
        \lvert r_y(y, u,\bm{\mu}) - r_y(y, u,  \hat{\bm{\mu}}) \rvert \leq L_R\ \delta_{\mathrm{max}}(\bm{\mu},  \hat{\bm{\mu}}).
    \end{align*}
    Moreover, $r_y$ is bounded by $M_R \coloneqq M_r + c_K$.
    \end{itemize}
\end{prop}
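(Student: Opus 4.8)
The plan is to establish part (a) by induction on $t$, and then deduce part (b) from a two-term telescoping decomposition that isolates the two channels through which $\bm{\mu}$ enters the augmented reward $r_y$.

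For part (a), I abbreviate $P \coloneqq p^{(t)}(\cdot \mid x, (a)_0^{t-1}, (\mu)_0^{t-1})$ and $\hat{P} \coloneqq p^{(t)}(\cdot \mid x, (a)_0^{t-1}, (\hat{\mu})_0^{t-1})$. The base case $t=1$ is exactly \Cref{assumption_lipschitz_p_r}(a), which gives $L_p\,\delta_{TV}(\mu_0,\hat{\mu}_0)$. For the inductive step I would write the Chapman--Kolmogorov recursion
\begin{align*}
P = \int_{\mathcal{X}} p(\cdot \mid x_{t-1}, a_{t-1}, \mu_{t-1})\, p^{(t-1)}(\de x_{t-1} \mid x, (a)_0^{t-2}, (\mu)_0^{t-2}),
\end{align*}
and insert the intermediate measure $\tilde{P}$ obtained by replacing only $\mu_{t-1}$ with $\hat{\mu}_{t-1}$ in the final kernel while keeping $(\mu)_0^{t-2}$ in $p^{(t-1)}$, so that $\delta_{TV}(P,\hat{P}) \le \delta_{TV}(P,\tilde{P}) + \delta_{TV}(\tilde{P},\hat{P})$. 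The first term is handled by pushing the supremum over sets inside the shared mixing measure and applying \Cref{assumption_lipschitz_p_r}(a) pointwise, giving $\delta_{TV}(P,\tilde{P}) \le L_p\,\delta_{TV}(\mu_{t-1},\hat{\mu}_{t-1})$. For the second term, $\tilde{P}$ and $\hat{P}$ integrate the \emph{same} kernel $p(\cdot \mid x_{t-1},a_{t-1},\hat{\mu}_{t-1})$ against two different mixing measures, so the non-expansiveness of total variation under a common Markov kernel (valid because $B \mapsto p(B \mid \cdot)$ is $[0,1]$-valued) bounds it by $\delta_{TV}(p^{(t-1)}(\cdot \mid \ldots,(\mu)_0^{t-2}), p^{(t-1)}(\cdot \mid \ldots,(\hat{\mu})_0^{t-2}))$, which the induction hypothesis controls by $(t-1)L_p\,\delta_{\mathrm{max}}$. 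Summing yields $t L_p\,\delta_{\mathrm{max}}$, closing the induction; the stated $\delta_{TV}$-continuity in the remaining arguments follows from the same scheme, since \Cref{assumption_lipschitz_p_r}(a) makes the one-step kernel jointly Lipschitz in $(x,a,\mu)$.

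For part (b) the cost $\sum_k c_k \mathbbm{1}_{\{i=k\}}$ is $\bm{\mu}$-independent and cancels in the difference. Writing $I(\bm{\mu}) \coloneqq \int_{\mathcal{X}} r(x',a_t,\mu_t)\, p^{(d)}(\de x' \mid x,\bm{a},(\mu)_{t-d}^{t-1})$, I would insert $\tilde{I} \coloneqq \int_{\mathcal{X}} r(x',a_t,\hat{\mu}_t)\, p^{(d)}(\de x' \mid x,\bm{a},(\mu)_{t-d}^{t-1})$. The difference $I(\bm{\mu})-\tilde{I}$ changes only the measure inside $r$, so \Cref{assumption_lipschitz_p_r}(b) bounds it by $L_r\,\delta_{TV}(\mu_t,\hat{\mu}_t) \le L_r\,\delta_{\mathrm{max}}(\bm{\mu},\hat{\bm{\mu}})$; the difference $\tilde{I}-I(\hat{\bm{\mu}})$ integrates the fixed function $r(\cdot,a_t,\hat{\mu}_t) \in [0,M_r]$ against the difference of two $d$-step kernels, so using $|\nu-\nu'|(\mathcal{X}) = 2\,\delta_{TV}(\nu,\nu')$ it is at most $2M_r\,\delta_{TV}(p^{(d)}(\cdot\mid\ldots,(\mu)),p^{(d)}(\cdot\mid\ldots,(\hat{\mu})))$, which part (a) together with $d \le d_0$ bound by $2M_r\, d_0 L_p\,\delta_{\mathrm{max}} = 2M_r L_P\,\delta_{\mathrm{max}}$. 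Adding the two contributions gives $L_R = L_r + 2M_r L_P$, and boundedness is immediate from $\int r\,p^{(d)} \in [0,M_r]$ and $\sum_k c_k\mathbbm{1}_{\{i=k\}} \in [0,c_K]$, whence $|r_y| \le M_r + c_K = M_R$.

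The main obstacle is the inductive step of part (a): rather than bounding the full difference at once, one must telescope so that the final transition kernel and the previously accumulated $t-1$ steps are perturbed separately, the latter relying on the non-expansiveness of total variation under application of a common Markov kernel. Once this is set up cleanly, part (b) is a routine two-term split, the only subtlety being the factor $2$ from $|\nu-\nu'|(\mathcal{X}) = 2\,\delta_{TV}(\nu,\nu')$ that produces the $2M_r L_P$ term in $L_R$.
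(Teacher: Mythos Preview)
Your proposal is correct and follows essentially the same strategy as the paper: induction with a telescoping split for part~(a), and a two-term decomposition isolating the $r$-dependence and the $p^{(d)}$-dependence for part~(b). The only cosmetic difference is that the paper passes through densities via a dominating measure (\Cref{lem:dominating_measure}) to carry out the telescoping in~(a), whereas you invoke the TV non-expansiveness of Markov kernels directly; in~(b) the paper chooses the opposite intermediate term and appeals to \Cref{prop: maxmin_ineq} (with $\lambda(r)\le 2M_r$) in place of your $|\nu-\nu'|(\mathcal{X})=2\,\delta_{TV}$ observation, but the resulting bounds coincide.
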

\begin{proof}~~
\begin{itemize}
\item[(a)] The continuity of the $t$-step transition kernels follow directly from \eqref{eq:n_step_kernel_w/measure}. The Lipschitz property holds by assumption for the base case. Suppose now the Lipschitz property holds for some $t >1$. By \Cref{lem:dominating_measure}, we can find a dominating measure $P \in \Pcal(\mathcal{X})$, such that for any $s \in \{1, ...,t+1\}$, the $s$--step kernels are absolutely continuous with respect to $P$. Let $x_0 \in \mathcal{X}$, $(a)^t_0 \in A^{t+1}$, and $(\mu)^t_0, (\hat{\mu})^t_0\in \Pcal(\mathcal{X})^{t+1}$. For the rest of the proof, we will use the shorthand $p^{(s)}$ and $\hat{p}^{(s)}$ to represent the dependence of the $s$--step kernels on $(\mu)^{s-1}_0$ and $(\hat{\mu})^{s-1}_0$ respectively. Similarly, we denote by $q^{(s)}$ and $\hat{q}^{(s)}$ respectively, the densities of $p^{(s)}$ and $\hat{p}^{(s)}$ with respect to $P$. Then, the analogous to \eqref{eq:n_step_kernel_w/measure} holds for the densities: for $\tilde{q} \in \{q ,\hat{q}\}$,
\begin{align*}
    \tilde{q}^{(t+1)}(x_{t+1}) = \int_{\mathcal{X}} \tilde{q}(x_{t+1} \mid x_t, a_t) \tilde{q}^{(t)}(x_t \mid x_0, (a)^{t-1}_0) P(\de x_t).
\end{align*}
We then have
    \begin{align*}
        & \delta_{TV}(p^{(t+1)}(\cdot\mid x_0, (a)^t_0),\ \hat{p}^{(t+1)}(\cdot\mid x_0, (a)^t_0 ) )\\
        =\ &\frac{1}{2} \int_{\mathcal{X}} \left\lvert q^{(t+1)}(x_{t+1}\mid x_0, (a)^t_0) -  \hat{q}^{(t+1)}(x_{t+1}\mid x_0, (a)^t_0) \right \rvert P(\de x_{t+1}) \\
        \leq\ & \frac{1}{2} \int_{\mathcal{X}^2} \big\lvert q(x_{t+1}\mid x_t, a_t) (q^{(t)}( x_t\mid x_0, (a)^{t-1}_0) - \hat{q}^{(t)}(x_t\mid x_0, (a)^{t-1}_0))  \\
         & \qquad\quad  - \hat{q}^{(t)}(x_{t}\mid x_0, (a)^{t-1}_0) (q(x_{t+1}\mid x_t, a_t) - \hat{q}(x_{t+1} \mid x_t, a_t)) \big\rvert P(\de x_t) P(\de x_{t+1})\\
        \leq\ & \delta_{TV}(p^{(t)}(\cdot\mid x_0, (a)^{t-1}_0),\ \hat{p}^{(t)}(\cdot\mid x_0, (a)^{t-1}_0))\\
        & \quad + \frac{1}{2} \int_{\mathcal{X}} \sup_{x \in \mathcal{X}} \lvert q(x_{t+1}\mid x, a_t) - \hat{q}(x_{t+1} \mid x,a_t) \rvert P(\de x_{t+1}) \\
        \leq\ &  tL_p\ \delta_{\max}((\mu)^{t-1}_0,(\hat{\mu})^{-1}_0) + L_p\ \delta_{TV}(\mu_{t}, \hat{\mu}_t))  \\
        \leq\ & (t+1)L_p\ \delta_{\max}\left((\mu)^t_0, (\hat{\mu})^t_0\right),
    \end{align*}
    where in the penultimate inequality we use the assumption that $\mathcal{X}$ is compact and the transition kernels (and therefore also the densities) are continuous with respect to $x$. The result then holds by induction.
\item[(b)] Let $y=\left(d, x, \bm{a}\right)$, and assume $\bm{\mu}$ and $\bm{\hat{\mu}}$ are indexed over time $t-d_0$ to $t$, then
\begin{align*}
    \ & \lvert r_y(y, u, \bm{\mu}) - r_y(y, u, \hat{\bm{\mu}}) \rvert \\
 =\ & \left\lvert  \int_{ \mathcal{X}} r(\pr{x}, a_t, \mu_t)\ p^{(d)}(\de \pr{x} \mid x, \bm{a}, (\mu)^{t-1}_{t-d}) -  \int_{\mathcal{X}} r(\pr{x}, a_t, \hat{\mu}_t)\ p^{({d})}(\de \pr{x} \mid {x}, {\bm{a}}, (\hat{\mu})^{t-1}_{t-d})\right\rvert \\
 \leq\ &  \left\lvert  \int_{\mathcal{X}} r(\pr{x}, a_t, \mu_t) \left(p^{(d)}(\de \pr{x} \mid x, \bm{a}, (\mu)^{t-1}_{t-d}) - \ p^{({d})}(\de \pr{x} \mid {x}, {\bm{a}}, (\hat{\mu})^{t-1}_{t-d}\right) \right\rvert\\
 & \quad +  \left\lvert  \int_{\mathcal{X}} \left( r(\pr{x}, a_t, \mu_t)-   r(\pr{x}, a_t, \hat{\mu}_t)\right) p^{({d})}(\de \pr{x} \mid {x}, {\bm{a}}, (\hat{\mu})^{t-1}_{t-d}) \right\rvert \\
 \leq\ & \lambda(r)\ \delta_{TV}(  p^{(d)}\big(\cdot  \mid x ,\bm{a}, (\mu)^{t-1}_{t-d}\big),\ p^{({d})}\big(\cdot \mid {x}, {\bm{a}}, (\hat{\mu})^{t-1}_{t-d}\big) ) \\
 & \quad + \sup_{x \in \mathcal{X}} \lvert r(x,a, \mu_t) - r(x,{a}, \hat{\mu}_t) \rvert \\
 \leq\ & 2M_r  L_P\ \delta_{\mathrm{max}}(\bm{\mu}, \hat{\bm{\mu}}) + L_r \delta_{\mathrm{max}}(\bm{\mu}, \hat{\bm{\mu}}),
\end{align*}
where \Cref{prop: maxmin_ineq} is used for the second inequality. The second part on boundedness is immediate from the definition of $r_y$.
    \end{itemize}
\end{proof}

\subsection{Mean-field Nash equilibrium}\label{sub:mfne} 

We proceed to define the mean-field Nash equilibrium (MFNE) for the mean-field MCDM problem. As the presence of observation delays leads to a non-Markovian problem, it is more convenient to formulate the MFNE fixed point in terms of the augmented space. We shall give the MFNE definition in terms of both the underlying and augmented problem, and show that the two notions are equal. We note that even in the augmented formulation, the objective function $J_{\bm{\mu}}$ depends on the measure flow $\bm{\mu} \in \Pcal(\mathcal{X})^{\infty}$ on the underlying state space. Therefore, in order to define the MFNE in the augmented problem, we construct a map that takes for each time $t$, a measure $\nu_t$ on the augmented space, to a sequence of measures $(\mu_{t,d})^{d_0}_{d=0} \in \Pcal(\mathcal{X})^{d_0+1}$. We will use the following operation which integrates a transition kernel against a measure.

\begin{defn}\label{def:star_operator}
    Let $E,F,G$ be Borel spaces. Let $\nu \in \Pcal(E \times F)$ and $p: E \times \mathcal{B}(G) \to [0,1]$ be a transition kernel. We define the measure $v \star p \in \Pcal(G \times F)$ as follows. For any open set $O = O_1 \times O_2 \in G \times F$,
    \begin{align*}
          (v \star p)(O_1 \times O_2) = \int_E p(O_1 \mid e) \mathbf{1}_{O_2}\ \nu (\de e \times \de f).
    \end{align*}
\end{defn}

We will typically take $E = \mathcal{X} \times A$, $F = A^{d-1}$ and $G = \mathcal{X}$ for some $d \in \{d_K,\ldots, d_0\}$. In this setting, $v \star p \in \Pcal(\mathcal{X} \times A^{d-1})$, and we can define $v \star_k p \coloneqq (v \star_{k-1} p) \star p  \in \Pcal(\mathcal{X} \times A^{d-k})$ for $k \in \{0, \ldots, d\}$. If $\bar{p} = (p_1, \ldots ,p_k)$ is a sequence of transition kernels, write $v \star_k \bar{p} \coloneqq ((v \star p_1) \star \ldots) \star p_k$. We will also write $p^{\mu}$ when we want to indicate the dependence of the transition kernel $p$ on the measure $\mu$.\\

To construct the aforementioned map, we enlarge $\mathcal{Y}$ to 
\begin{align*}
    \tilde{\mathcal{Y}} \coloneqq \bigcup_{d=d_K}^{d_0} \left( \{d\} \times \mathcal{X}^{d_0-d+1} \times A^d \right) = \bigsqcup^{d_0}_{d=d_K} \left( \mathcal{X}^{d_0-d+1} \times A^d \right) \eqqcolon \bigsqcup^{d_0}_{d=d_K}\tilde{\mathcal{Y}}_d.
\end{align*}
In this instance, an element $\tilde{y}_n \in \tilde{\mathcal{Y}}$ can be viewed as
$\tilde{y}_n = (d, x_{n-d_0}, \ldots, x_{n-d}, a_{n-d}, \ldots, a_{n-1})$. This extra enlargement of $\mathcal{Y}$ to $\tilde{\mathcal{Y}}$ 
%
%
enables 
the measure flow construction in \Cref{defn:aug_to_underlying} below, as well as the
formulation
of the MFNE fixed-point condition as 
in \Cref{defn:mfne_mcdm}.
%
%



\begin{defn}\label{defn:aug_to_underlying}
    Let $\bm{\nu} = (\nu_t)_t \in \Pcal(\tilde{\mathcal{Y}})^{\infty}$. For each $t \geq 0$, we write $\nu_t = \sum_d w^d_t \nu^d_t$, where $w^d_t \in [0,1]$ and $\nu^d_t \in \Pcal(\tilde{\mathcal{Y}}_d)$. For $d \leq \pr{d} \in \{0,\ldots, d_0\}$, let $\nu^{x_{t-\pr{d}}}_t$ and $\nu^{x_{t-\pr{d}}\vert d}_t$ be the marginals of $\nu_t$ and $\nu^{d}_t$ in the $x_{t-\pr{d}}$ coordinate respectively. Starting with $\pr{d} = d_0$, we define $\mu_{t,d_0} = \nu^{x_{t-d_0}}_t \in \Pcal(\mathcal{X})$. Then, for each $0 \leq   \pr{d} < d_0$, we define
\begin{align*}
    \mu_{t,\pr{d}}&= \sum^{d_0}_{d=d_K} w^d_t \xi^{d}_{t,\pr{d}},\quad \xi^{d}_{t,\pr{d}}\in \Pcal(\mathcal{X}), \\
    \xi^{d}_{t,\pr{d}}(C) &= \begin{cases}
        \nu^{x_{t-\pr{d}}\mid d}_t(C), & d \leq \pr{d}; \\
       (\nu^{d,x,a}_t \star_{d - \pr{d}} p^{d, \pr{d}}_{\mu}) (C \times A^{\pr{d}}), & d > \pr{d},
    \end{cases}\\
     p_{d, \pr{d}}^{\mu} &= (p^{\mu_{t,d}},\ldots, p^{\mu_{t, \pr{d}-1}}),\ C \in \mathcal{B}(\mathcal{X}),
\end{align*}
where $\nu^{d,x,a}_t$ is the marginal of $\nu^d_t$ on the $(x_{t-d}, a_{t-d},\ldots , a_{t-1})$--coordinates. Finally, we define $\bm{\mu}^{\nu}_t \coloneqq (\mu_{t,d})^{d_0}_{d=0}$ and $\mathcal{M}(\nu_0) \coloneqq \mu_{t,0}$.
\end{defn}

We interpret $\bm{\mu}^{\nu}_t$ in \Cref{defn:aug_to_underlying} as the distribution of the underlying states from time $t-d_0$ to time $t$, computed from $\nu_t$ -- the distribution of the observable past states at time $t$. 
The following lemma shows that the mapping $\nu_t \mapsto \bm{\mu}^{\nu}_t$ is Lipschitz, and will be useful in the later analysis.

\begin{prop}\label{lem:nu_to_mu} 
    The mapping $\nu_t \mapsto \bm{\mu}^{\nu}_t$ is Lipschitz with constant $L_M = \sum^{d_0}_{d=0} (d L_p)^d$.
\end{prop}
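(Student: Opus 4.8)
We need to show that $\nu_t \mapsto \bm{\mu}^{\nu}_t$ is Lipschitz with constant $L_M = \sum_{d=0}^{d_0}(dL_p)^d$. The metric on the target is presumably $\delta_{\max}$ on $\Pcal(\mathcal{X})^{d_0+1}$ (the sup of TV distances over the components), and the source metric is $\delta_{TV}$ on $\Pcal(\tilde{\mathcal{Y}})$.

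**Structure of the map.** Recall $\bm{\mu}^{\nu}_t = (\mu_{t,\pr{d}})_{\pr{d}=0}^{d_0}$ where each $\mu_{t,\pr{d}} = \sum_d w^d_t \xi^d_{t,\pr{d}}$, and $\xi^d_{t,\pr{d}}$ is either a marginal of $\nu^d_t$ (when $d \le \pr{d}$) or obtained by pushing $\nu^{d,x,a}_t$ through $d-\pr{d}$ transition kernels via the $\star$ operator (when $d > \pr{d}$). The key subtlety: the kernels $p^{\mu_{t,d}}, \ldots, p^{\mu_{t,\pr{d}-1}}$ themselves depend on earlier components of the same output $\bm{\mu}^{\nu}_t$. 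So the map is defined recursively going from $\pr{d} = d_0$ down to $\pr{d} = 0$.

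**Proof approach — backward induction on $\pr{d}$.** I'd estimate $\delta_{TV}(\mu_{t,\pr{d}}, \hat{\mu}_{t,\pr{d}})$ for two inputs $\nu_t, \hat\nu_t$ by induction, starting from $\pr{d}=d_0$ (the base, a pure marginal, Lipschitz constant $1$) and decreasing. For each $\pr{d}$, $\mu_{t,\pr{d}} - \hat\mu_{t,\pr{d}}$ decomposes into: (i) differences coming from the weights $w^d_t$ and the measures $\nu^d_t$ themselves — these are controlled directly by $\delta_{TV}(\nu_t,\hat\nu_t)$ since marginalization is a contraction for TV; and (ii) differences coming from the kernels $p^{\mu_{t,d}}$ vs $p^{\hat\mu_{t,d}}$, which by Assumption \ref{assumption_lipschitz_p_r}(a) and Proposition \ref{prop:lip_cont}(a) introduce factors of $L_p$ times the TV-distance of the already-estimated earlier components $\mu_{t,d}$. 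The telescoping/triangle-inequality manipulation for pushing measures through kernels (analogous to the induction in the proof of Proposition \ref{prop:lip_cont}(a)) generates the step count $d-\pr{d}$ as a multiplicative factor, and the dependence on previously computed $\mu_{t,d}$ feeds the induction.

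**The main obstacle and the combinatorics.** The hard part is the bookkeeping of how the kernel-dependence on earlier components compounds. Each application of $\star$ with $p^\mu$ contributes both a direct transport of the input error and an $L_p$-weighted propagation of the error in $\mu$ from the already-solved larger-$\pr{d}$ stages. Unrolling the recursion, the error in $\mu_{t,\pr{d}}$ picks up contributions that are products of up to $d$ factors of the form $(\text{step count})\cdot L_p$, and summing the worst-case contributions across all $d$ is exactly what should yield the stated bound $\sum_{d=0}^{d_0}(dL_p)^d$. Concretely, I expect to prove by backward induction the bound $\delta_{TV}(\mu_{t,\pr{d}},\hat\mu_{t,\pr{d}}) \le C_{\pr{d}}\,\delta_{TV}(\nu_t,\hat\nu_t)$ with $C_{d_0}=1$ and a recursion $C_{\pr{d}} \le 1 + \sum_{d>\pr{d}} w^d_t(d-\pr{d})L_p \max_{\pr{d}\le \ell < d} C_\ell$ or similar, then bound the resulting $C_0$ (or $\max_{\pr{d}} C_{\pr{d}}$) crudely by $\sum_{d=0}^{d_0}(dL_p)^d$. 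The delicate points will be (a) correctly tracking that the kernels in $p_{d,\pr{d}}^\mu$ depend on components $\mu_{t,\ell}$ with $\ell \ge \pr{d}$ that are already controlled, so the induction is well-founded, and (b) verifying that marginalizations and the $\star$-operation are each TV-nonexpansive in the input measure while contributing the correct Lipschitz factor in the kernel argument. I would use Proposition \ref{prop: maxmin_ineq} and Proposition \ref{prop:lip_cont}(a) as the two workhorses, and the triangle-inequality splitting exactly as in the proof of Proposition \ref{prop:lip_cont}(a).
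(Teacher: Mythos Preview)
Your approach is correct and essentially identical to the paper's: backward induction on $\pr{d}$ starting from $\pr{d}=d_0$, splitting each $\mu_{t,\pr{d}}-\hat\mu_{t,\pr{d}}$ into a marginal-difference part (controlled directly by $\delta_{TV}(\nu_t,\hat\nu_t)$) and a kernel-difference part (controlled via \Cref{prop:lip_cont}(a) and the already-bounded components $\mu_{t,\ell}$ for $\ell>\pr{d}$), then summing the compounded contributions to obtain $L_M$. The paper carries out exactly this triangle-inequality decomposition and recursion, using \Cref{prop: maxmin_ineq} and a dominating-measure argument to pass to densities; your sketch anticipates all of these ingredients.
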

\begin{proof}
Let $\nu_t, \hat{\nu_t} \in \Pcal(\tilde{\mathcal{Y}})$, with respective images $\bm{\mu}^{\nu}_t = (\mu_{t,d})^{d_0}_{d=0}$ and $\bm{\mu}^{\hat{\nu}}_t = (\hat{\mu}_{t,d})^{d_0}_{d=0}$. As in \Cref{defn:aug_to_underlying}, we can write $\nu_t = \sum_d w^d_t \nu^d_t$ and $\hat{\nu}_t = \sum_d \hat{w}^d_t \hat{\nu}^d_t$. We use the shorthand
\begin{align*}
    \delta(d) = \delta_{TV}(\mu_{t,d}, \hat{\mu}_{t,d}),\quad \delta(\Bar{d}, \pr{d}) = \delta_{\max}((\mu_{t,d})^{\Bar{d}}_{\pr{d}}, (\hat{\mu}_{t,d})^{\Bar{d}}_{\pr{d}}).
\end{align*}
By a similar argument to \Cref{lem:dominating_measure}, we can find a measure $q \in \Pcal(\mathcal{X})$, against which $\xi^{\Bar{d}}_{t,\pr{d}}$ is absolutely continuous for all $0 \leq \Bar{d}, \pr{d} < d_0$. Therefore, with slight abuse of notation, we shall use $\xi^{\Bar{d}}_{t,\pr{d}}$ to denote both the measure and its corresponding density. By definition we have
\begin{align*}
    \delta(d_0) = \delta_{TV}(\nu^{x_{t-d_0}}_t,\hat{\nu}^{x_{t-d_0}}_t)\leq \delta_{TV}(\nu_t, \hat{\nu}_t).
\end{align*}
Suppose now that $\pr{d} < d_0$. Then
\begin{align}\label{eq:aug_to_underlying_map_proof}
    \delta(\pr{d}) &= \frac{1}{2} \int_{\mathcal{X}} \Bigg\lvert \underbrace{\sum^{d_0}_{d=d_K} w^d_t \xi^{d}_{t, \pr{d}}(\pr{x})}_{I_1(\pr{x})} - \underbrace{\sum^{d_0}_{d=d_K} \hat{w}^d_t \hat{\xi}^{d}_{t, \pr{d}}(\pr{x})}_{I_2(\pr{x})} \Bigg\rvert q(\de \pr{x})
\end{align}
We can write $I_1$ and $I_2$ as
\begin{align*}
    I_1 &= \sum^{\pr{d}}_{d =d_K} w^d_t \nu^{x_{t-\pr{d}}\vert d}_t + \underbrace{\sum^{d_0}_{d = \pr{d}+1} w^d_t\cdot [\nu^{d,x,a}_t \star_{d - \pr{d}} p_{d, \pr{d}}^{\mu} ]_\mathcal{X}}_{J_1}\ ,\\
    I_2 &=\sum^{\pr{d}}_{d =d_K } \hat{w}^d_t \hat{\nu}^{x_{t-\pr{d}}\vert d}_t + \underbrace{\sum^{d_0}_{d = \pr{d}+1} \hat{w}^d_t\cdot [\hat{\nu}^{d,x,a}_t \star_{d - \pr{d}} p_{d, \pr{d}}^{\hat{\mu}} ]_\mathcal{X}}_{J_2}\ ,
\end{align*}
where $[\cdot]_{\mathcal{X}}$ denotes the marginal of the measure on $\mathcal{X}$ (cf. \Cref{defn:aug_to_underlying}). Then
\begin{align*}
    &\int_{\mathcal{X}} \lvert J_1 (\pr{x})- J_2 (\pr{x}) \rvert q(\de \pr{x}) \\
    \leq &\ \sum^{d_0}_{d = \pr{d}+1} \int_{\mathcal{X}} \Bigg\lvert  w^d_t\cdot [v^{d,x,a}_t \star_{d - \pr{d}} p_{d,\pr{d}}^{\mu} ]_\mathcal{X}\ (\pr{x}) - \hat{w}^d_t\cdot  [\hat{v}^{d,x,a}_t \star_{d - \pr{d}}  p_{d,\pr{d}}^{\hat{\mu}} ]_\mathcal{X}\ (\pr{x}) \Bigg\rvert q_x(\de \pr{x})\\
    \leq &\ \sum^{d_0}_{d = \pr{d}+1} \bigg(\int_{\mathcal{X}} \Bigg\lvert  w^d_t\cdot \Big([v^{d,x,a}_t \star_{d - \pr{d}}p_{d,\pr{d}}^{\mu} ]_\mathcal{X} -[\hat{v}^{d,x,a}_t \star_{d - \pr{d}} p_{d,\pr{d}}^{\hat{\mu}}]_\mathcal{X} \Big)\ (\pr{x}) \Bigg\rvert q(\de \pr{x})\\
   &\ \qquad \qquad + \int_{\mathcal{X}} \Bigg\lvert  w^d_t\cdot [v^{d,x,a}_t \star_{d - \pr{d}} p_{d,\pr{d}}^{\hat{\mu}}]_\mathcal{X}\ (\pr{x}) - \hat{w}^d_t\cdot  [\hat{v}^{d,x,a}_t \star_{d - \pr{d}} p_{d,\pr{d}}^{\hat{\mu}}]_\mathcal{X}\ (\pr{x}) \Bigg\rvert q(\de \pr{x}) \bigg)\\
    \leq&\ 2 \sum^{d_0}_{d = \pr{d}+1} \left((d-\pr{d})(L_p)^{d-\pr{d}} \cdot \delta(d, \pr{d}+1) + \lvert w^d_t\nu^{d,x,a}_t - \hat{w}^d_t\hat{\nu}^{d,x,a}_t \rvert \right).
\end{align*}
Returning to \eqref{eq:aug_to_underlying_map_proof}, we have
\begin{align*}
    \delta(\pr{d}) & \leq \frac{1}{2}   \int_{\mathcal{X}} \bigg(\lvert J_1(\pr{x}) - J_2(\pr{x}) \rvert + \sum^{\pr{d}}_{d = d_K}  \left\lvert w^d_t \nu^{x_{t-\pr{d}}\vert d}_t(\pr{x}) - \hat{w}^d_t\hat{\nu}^{x_{t-\pr{d}}\vert d}_t(\pr{x}) \right\rvert \bigg) q(\de \pr{x}) \\
    &\leq\ \sum^{d_0}_{d = \pr{d}+1}(d-\pr{d})(L_p)^{d-\pr{d}}\cdot \delta(d, \pr{d}+1) + \delta_{TV}\left(\nu^{(x)^{t-\pr{d}}_{t-d_0}, (a)^{t-\pr{d}-1}_{t-d_0}}_t,\ \hat{\nu}^{(x)^{t-\pr{d}}_{t-d_0}, (a)^{t-\pr{d}-1}_{t-d_0}}_t \right)\\
    &\leq\ \sum^{d_0}_{d = \pr{d}+1}(d-\pr{d})(L_p)^{d-\pr{d}}\cdot \delta(d_0) + \delta_{TV}\left(\nu^{(x)^{t-\pr{d}}_{t-d_0}, (a)^{t-\pr{d}-1}_{t-d_0}}_t,\ \hat{\nu}^{(x)^{t-\pr{d}}_{t-d_0}, (a)^{t-\pr{d}-1}_{t-d_0}}_t \right),
\end{align*}
where the superscripts above $\nu_t$ and $\hat{\nu}_t$ denote their marginals on the corresponding coordinates. Therefore, we have
\begin{align*}
    \delta_{\mathrm{max}}(\bm{\mu}^{\nu}_t, \bm{\mu}^{\hat{\nu}}_t) = \max_{0\leq d \leq d_0} \delta_{TV}(\mu_{t,d}, \hat{\mu}_{t,d}) \leq \sum^{d_0}_{d=0} (d L_p)^d\ \delta_{TV}(\nu_t, \hat{\nu}_t),
\end{align*}
as required.
\end{proof}

\begin{rem}
    A superficially similar task is solved using an elegant approach in \cite{saldi2019approximate,saldi2022partially} for a partial observation problem of a different kind, where the respective $\bm{\mu}^{\nu} = (\bm{\mu}^{\nu}_t)_t$ can be obtained as barycenters of measures ${\nu}_t$ describing the fully observable belief state (posterior state distribution). The barycenter approach makes use of the different structure of their partial observation setting, where observations of states are perturbed by unbiased noise. In our case, the belief state is parameterised by a finite set given by past observations, so the notion of taking the barycenter does not apply here. Instead, exploiting the fact that our state and action spaces are assumed to be finite, we will construct the required $\bm{\mu}^{\nu}_t$ explicitly by repeatedly applying the transition kernel \eqref{aug_transition}.
\end{rem}

We are now ready to define the mean-field Nash equilibrium (MFNE) through a fixed-point characterisation. Note that since policies can only rely on past information, we require an additional measure $\nu_0$ to act as an initial condition.

\begin{defn}\label{defn:mfne}
Let $\bm{\mu}=(\mu_t)_t \in \Pcal(\mathcal{X})^{\infty}$ and $\nu_0 \in \Pcal(\mathcal{Y})$ such that $\mu_ 0 = \mathcal{M}(\nu_0)$. Define:
\begin{enumerate}
    \item [(i)] The best-response map $\Phi_{\nu_0}: \Pcal(\mathcal{X})^{\infty} \to \mathcal{A}_{\rm DM}$, given by
        \begin{align*}
            \Phi_{\nu_0}(\bm{\mu}) 
            \coloneqq \left\{ \hat{\pi} \in \mathcal{A}_{\rm DM}:  J_{\bm{\mu}}( \hat{\pi}) = \sup_{\pi \in \mathcal{A}_{\rm DM}} J_{\bm{\mu}}(\pi) \right\}.
        \end{align*}
    \item[(ii)] The measure flow map $\Psi: \mathcal{A}_{\rm DM} \to \Pcal(\mathcal{X})^{\infty}$, defined recursively by $\Psi(\pi)_0 \coloneqq \mu_0$ and
    \begin{align*}
        \Psi(\pi)_{t+1}(\cdot) \coloneqq \E^{\pi}_{\mu_0}\left[ \int_{\mathcal{X}} p(\cdot \mid x ,\pi^a_t, \mu_t)\Psi(\pi)_t(\de x) \right]
    \end{align*}
    where $\pi^a_t$ is the marginal of $\pi_t$ on $A$.
    \item[(iii)] ${(\pi^{*}, \bm{\mu}^{*}) \in \mathcal{A}_{\rm DM} \times \Pcal(\mathcal{X})^{\infty}}$ is a mean-field Nash equilibrium (MFNE), if it is a fixed point of the map, i.e. $\pi^{*} \in \Phi_{\nu_0}(\bm{\mu}^{*})$ and $\bm{\mu}^{*} = \Psi(\pi^{*})$.
\end{enumerate}
\end{defn}

To define an MFNE in the augmented space, for any measure flow $\bm{\nu} \in \Pcal(\tilde{\mathcal{Y}})^{\infty}$, consider the augmented objective
\begin{align*}
    J_{\bm{\nu}}(\pi) \coloneqq \E^{\pi}_{\nu_0}\left[ \sum^{\infty}_{t=0} \gamma^{t} r_y(y_t, u_t, \bm{\mu}^{\nu}_t) \right],
\end{align*}
where $\E^{\pi}_{\nu_0}$ is the expectation induced by initial measure $\nu_0$, policy $\pi$, and transition kernel $p_y$, noting that the induced measures $\bm{\mu}^{\nu}$ are used when evaluating $p_y$ in \eqref{aug_transition}.
\begin{defn}\label{defn:mfne_mcdm}
Let $\bm{\nu}=(\nu_t)_t \in \Pcal(\tilde{\mathcal{Y}})^{\infty}$. Define:
\begin{enumerate}
    \item [(i)] The best-response map $\Phi^{\mathrm{aug}}: \Pcal(\tilde{\mathcal{Y}})^{\infty} \to \mathcal{A}_{\rm DM}$, given by
        \begin{align*}
            \Phi^{\mathrm{aug}}(\bm{\nu}) \coloneqq \left\{ \hat{\pi} \in \mathcal{A}_{\rm DM}:  J_{\bm{\nu}}( \hat{\pi}) = \sup_{\pi \in \mathcal{A}_{\rm DM}} J_{\bm{\nu}}(\pi) \right\}.
        \end{align*}
    \item[(ii)] The measure flow map $\Psi^{\mathrm{aug}}: \mathcal{A}_{\rm DM} \to \Pcal(\tilde{\mathcal{Y}})^{\infty}$, defined recursively by $\Psi^{\mathrm{aug}}(\pi)_0 \coloneqq \nu_0$ and
    \begin{align*}
        \Psi^{\mathrm{aug}}(\pi)_{t+1}(\cdot) \coloneqq\int_{\tilde{\mathcal{Y}}\times U} p_y\Big(\cdot \mid y, u, \bm{\mu}_t^{\Psi^{\mathrm{aug}}(\pi)} \Big) \pi_t(\de u \mid y) \Psi^{\mathrm{aug}}(\pi)_t(\de y).
    \end{align*}
    \item[(iii)] ${(\pi^{*}, \bm{\nu}^{*}) \in \mathcal{A}_{\rm DM} \times \Pcal(\tilde{\mathcal{Y}})^{\infty}}$ is a mean-field Nash equilibrium (MFNE) for the augmented MCDM problem, if it is a fixed point of the map $\Psi^{\mathrm{aug}} \circ \Phi^{\mathrm{aug}}$, i.e., $\pi^{*} \in \Phi^{\mathrm{aug}}(\bm{\nu}^{*})$ and $\bm{\nu}^{*} = \Psi^{\mathrm{aug}}(\pi^{*})$.
\end{enumerate}
\end{defn}

Note that thanks to the definition of $\Psi^{\rm aug}$ in (ii), the support of any measure $\bm{\nu}^{*}$ that satisfies the MFNE condition is consistent (cf. \Cref{defn:mcdm_admissible}). The following proposition demonstrates the equivalence of the two formulations of the MFNE.

\begin{prop}
    Let ${(\pi^{*}, \bm{\nu}^{*}) \in \mathcal{A}_{\rm DM} \times \Pcal(\tilde{\mathcal{Y}})^{\infty}}$ be an MFNE for the augmented problem. Then the induced pair  ${(\pi^{*}, \bm{\mu}^{\nu,*}) \in \mathcal{A}_{\rm DM} \times \Pcal(\mathcal{X})^{\infty}}$ is an MFNE for the underlying problem. Conversely, given ${(\pi^{*}, \bm{\mu}^{*}) \in \mathcal{A}_{\rm DM} \times \Pcal(\mathcal{X})^{\infty}}$, the induced pair ${(\pi^{*}, \bm{\nu}^{\mu,*}) \in \mathcal{A}_{\rm DM} \times \Pcal(\tilde{\mathcal{Y}})^{\infty}}$ is an MFNE for the augmented problem.
\end{prop}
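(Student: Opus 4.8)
The plan is to reduce the stated equivalence to two facts about the correspondence $\bm{\nu}\leftrightarrow\bm{\mu}^{\nu}$ built in \Cref{defn:aug_to_underlying}, writing throughout $\bm{\mu}^{\nu}=(\mathcal{M}(\nu_t))_t=(\mu_{t,0})_t\in\Pcal(\mathcal{X})^{\infty}$ for the current-state marginal flow extracted from $\bm{\nu}$. The two facts are: \textbf{(A)} the best-response maps agree, in the sense that $J_{\bm{\nu}}(\pi)=J_{\bm{\mu}^{\nu}}(\pi)$ for every $\pi\in\mathcal{A}_{\rm DM}$ and every $\bm{\nu}$ generated by $\Psi^{\mathrm{aug}}$, whence $\Phi^{\mathrm{aug}}(\bm{\nu})=\Phi_{\nu_0}(\bm{\mu}^{\nu})$; and \textbf{(B)} the measure-flow maps agree, in the sense that $\bm{\nu}=\Psi^{\mathrm{aug}}(\pi)$ implies $\bm{\mu}^{\nu}=\Psi(\pi)$.

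For Fact (A) I would start from the single-agent equivalence \Cref{prop:equivalence_aug}, which already provides the bijection of histories and policies between the MCDM and its augmented MDP preserving the objective value, and carry it over verbatim with the measure argument attached to $p$ and $r$. The point is that the augmented reward $r_y(y_t,u_t,\bm{\mu}^{\nu}_t)$ in \eqref{aug_reward} is precisely the conditional expectation of $r(x_t,a_t,\mu_{t,0})-\sum_k c_k\mathbbm{1}_{\{i_t=k\}}$ given the delayed observation, with the multi-step kernel fed exactly the intermediate marginals $(\mu_{t,d})_d$ supplied by \Cref{defn:aug_to_underlying}; taking $\E^{\pi}_{\nu_0}$ and summing over $t$ then collapses $J_{\bm{\nu}}$ onto $J_{\bm{\mu}^{\nu}}$. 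Since the right-hand side sees $\bm{\nu}$ only through $\bm{\mu}^{\nu}$, the two $\argmax$ sets coincide. One essential ingredient used here is the cross-time consistency $\mu_{t,d}=\mu_{t-d,0}$ of these intermediate marginals, which I would prove alongside Fact (B).

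Fact (B) I would prove by induction on $t$. The base case is the standing compatibility $\mu_0=\mathcal{M}(\nu_0)$ built into the initialisation of both $\Psi$ and $\Psi^{\mathrm{aug}}$. For the step, I would marginalise the augmented recursion $\Psi^{\mathrm{aug}}(\pi)_{t+1}$ onto the present ($x_{t+1}$) coordinate and check that it reproduces the underlying recursion $\int_{\mathcal{X}}p(\cdot\mid x,\pi^a_t,\mu_t)\,\Psi(\pi)_t(\de x)$ of \Cref{defn:mfne}. This is where the two-case structure of $p_y$ in \eqref{aug_transition} must be resolved: in the delay-shortening branch the multi-step kernel $p^{(d-d_i+1)}$ advances the belief, while in the delay-extending branch the observed state is frozen, and after projecting onto the present coordinate and invoking the belief-recovery identity of \Cref{defn:aug_to_underlying} (the repeated $\star$-composition of the one-step kernels $p^{\mu_{t,d}}$) both branches collapse to a single application of $p(\cdot\mid x,\pi^a_t,\mu_t)$ against $\mu_t=\mu_{t,0}$. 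The same computation simultaneously yields the cross-time consistency $\mu_{t,d}=\mu_{t-d,0}$ needed above, again inductively.

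With (A) and (B) in place both directions are immediate. Forward: if $(\pi^*,\bm{\nu}^*)$ is an augmented MFNE, then $\pi^*\in\Phi^{\mathrm{aug}}(\bm{\nu}^*)=\Phi_{\nu_0}(\bm{\mu}^{\nu,*})$ by (A), and $\bm{\mu}^{\nu,*}=\Psi(\pi^*)$ by (B) applied to $\bm{\nu}^*=\Psi^{\mathrm{aug}}(\pi^*)$, so $(\pi^*,\bm{\mu}^{\nu,*})$ satisfies \Cref{defn:mfne}. Converse: setting $\bm{\nu}^{\mu,*}:=\Psi^{\mathrm{aug}}(\pi^*)$, Fact (B) gives that the current-state marginal flow of $\bm{\nu}^{\mu,*}$ equals $\Psi(\pi^*)=\bm{\mu}^*$, whence (A) yields $\Phi^{\mathrm{aug}}(\bm{\nu}^{\mu,*})=\Phi_{\nu_0}(\bm{\mu}^*)\ni\pi^*$, while $\bm{\nu}^{\mu,*}=\Psi^{\mathrm{aug}}(\pi^*)$ holds by construction. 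I expect Fact (B) — concretely, the marginalisation of the two-branch kernel \eqref{aug_transition} together with the cross-time consistency of the intermediate marginals — to be the main obstacle, since the bookkeeping of the $\star$-operation and the $d$-step kernels carries the real content, whereas Fact (A) is essentially \Cref{prop:equivalence_aug} with a measure argument appended.
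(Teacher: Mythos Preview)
Your proposal is correct and follows essentially the same approach as the paper: the paper likewise shows $\pi^{*}\in\Phi_{\nu_0}(\bm{\mu}^{\nu,*})$ by appeal to the single-agent equivalence, and then establishes $\bm{\mu}^{\nu,*}=\Psi(\pi^{*})$ by induction on $t$ using the cross-time consistency identity (in the paper's formulation, $\mu^{\nu,*}_{t,0}=\mu^{\nu,*}_{t+1,1}$), with the converse handled by setting $\bm{\nu}^{\mu,*}=\Psi^{\mathrm{aug}}(\pi^{*})$ exactly as you do. Your decomposition into Facts (A) and (B) and your explicit identification of the two-branch marginalisation as the main bookkeeping step are in fact more carefully articulated than the paper's own treatment, which is quite terse.
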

\begin{proof}
    Assume that $(\pi^{*}, \bm{\nu}^{*})$ is an MFNE in the augmented space. By definition we have $\pi^{*} \in \Phi(\bm{\mu}^\nu)$ and also $\mu^{\nu,*}_0 = \mathcal{M}(\nu_0) = \Psi(\pi^{*})_0$. Since $\bm{\nu}^{*}$ is the image of $\Psi^{\rm aug}$, it holds that $\mu^{\nu,*}_{t, 0} = \mu^{\nu,*}_{t+1, 1}$ for any $t \geq 0$. Next, assume $\mu^{\nu,*}_t = \Psi(\pi^{*})_t$ holds true, then
    \begin{align*}
     \Psi(\pi)_{t+1}(\cdot) & = \E^{\pi}_{\mu_0}\left[ \int_{\mathcal{X}} p(\cdot \mid x ,\pi^{a,*}_t, \mu^{\nu,*}_t)\mu^{\nu,*}_t(\de x) \right] \\
     &= \int_A \int_{\mathcal{X}} p(\cdot \mid x ,a, \mu^{\nu}_{t+1, 1})\mu^{\nu,*}_{t+1, 1}(\de x) \pi^{a,*}_t(\de a)\\
     & = \mu^{\nu,*}_{t+1, 0}(\cdot) = \mu^{\nu,*}_{t+1} (\cdot),
    \end{align*}
    noting that the distribution of  $\pi^{a,*}_t$ is the same as the marginal of $\nu^{*}_{t+1}$ on the variable $a_t$. Therefore the claim that $(\pi^{*}, \bm{\mu}^{\nu,*})$ is an MFNE for the underlying problem holds by induction. 
    
    Conversely, let $(\pi^{*}, \bm{\mu}^{*})$ be an MFNE for the underlying problem. With the joint distribution of all states and actions given, we can directly construct a measure flow $\bm{\nu}^{\mu,*} \in \Pcal(\tilde{\mathcal{Y}})^{\infty}$ such that $\bm{\mu}^{\bm{\nu}^{\mu,*}} = \bm{\mu}^{*}$, so that $\pi^{*} \in \Phi^{\rm aug}(\bm{\nu}^{\mu,*})$. The condition that $\bm{\nu}^{\mu,*} = \Psi^{\rm aug}(\pi^{*})$ is trivially satisfied.
\end{proof}

The existence of MFNE for discrete MFGs in the fully observable case is demonstrated in \cite{saldi_mfg_discrete}, and further extended to the partial information case in \cite{saldi2019approximate}. For completeness sake, we include the result here, citing the relevant result.

\begin{thm} \label{thm:existence}
    Under the assumptions of the MCDM in \Cref{defn:mcdm}, \Cref{ass_compacct}, and \Cref{assumption_lipschitz_p_r}(a), there exists a mean-field Nash equilibrium for the mean-field game under the MCDM, in the sense of \Cref{defn:mfne} (or \Cref{defn:mfne_mcdm}).
\end{thm}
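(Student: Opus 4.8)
The plan is to exploit the equivalence in \Cref{prop:equivalence_aug}, which recasts the mean-field MCDM as a standard fully observable mean-field game on the compact augmented space $\tilde{\mathcal{Y}}$ with action space $U = A \times \mathcal{I}$, and then to verify that this augmented game meets the hypotheses of the existence theorems for compact-space MFGs in \cite{saldi_mfg_discrete, saldi2019approximate}. Concretely, I would realise the MFNE of \Cref{defn:mfne_mcdm} as a fixed point of a set-valued self-map $\Gamma$ on the space of admissible measure flows and invoke the Kakutani--Fan--Glicksberg fixed point theorem. A key preliminary remark is that, \emph{unlike} the regularised contraction result that follows, the existence argument must be carried out in the \emph{weak} topology: the total-variation-based metric $\delta_{\infty}$ is too strong to give compactness of $\Pcal(\tilde{\mathcal{Y}})$, since point masses at distinct states are uniformly $\delta_{TV}$-separated.

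First I would fix the domain. As $\mathcal{X}$ and $A$ are compact (\Cref{ass_compacct}), $\tilde{\mathcal{Y}}$ is compact, so by Prokhorov's theorem $\Pcal(\tilde{\mathcal{Y}})$ is weakly compact; hence $\Pcal(\tilde{\mathcal{Y}})^{\infty}$, equipped with the product of weak topologies, is compact and convex by Tychonoff's theorem. I would then restrict $\Gamma$ to the closed, convex, nonempty subset of flows that are consistent (cf.\ \Cref{defn:mfne_mcdm}) and share a fixed initial law $\nu_0$, which is invariant under the forward dynamics.

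Next I would establish the regularity of the two constituent maps in the weak topology. For the measure-flow map, weak continuity follows from the weak continuity of the augmented kernel $p_y$ (inherited from \Cref{ass_compacct} via \eqref{aug_transition}) together with the weak continuity of the induced-measure map $\nu_t \mapsto \bm{\mu}^{\nu}_t$: the decomposition weights $w^d_t$ are weakly continuous because the components $\tilde{\mathcal{Y}}_d$ are clopen, and the $\star$-operation of \Cref{def:star_operator} preserves weak continuity since $e \mapsto \int g\,\de p(\cdot\mid e)$ is bounded continuous for $g \in C_b$; this is consistent with (and strengthened by) the $\delta_{TV}$-Lipschitz bound of \Cref{lem:nu_to_mu}, and it is precisely the step that replaces the barycenter construction of \cite{saldi2019approximate}. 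For the best-response correspondence $\Phi^{\mathrm{aug}}$, nonemptiness follows from standard existence of optimal (Markov) policies for a fully observable MDP on compact spaces with bounded continuous reward $r_y$ and weakly continuous kernel; joint continuity of $\bm{\nu} \mapsto J_{\bm{\nu}}(\pi)$ uses that $r_y$ is bounded and continuous (\Cref{prop:lip_cont}), that $\gamma \in (0,1)$ makes the discounted sum converge uniformly, and that $\bm{\nu}$ enters only through the continuous map $\bm{\nu} \mapsto \bm{\mu}^{\nu}$. Berge's maximum theorem then gives that $\Phi^{\mathrm{aug}}$ is upper hemicontinuous with nonempty compact values, and admitting randomised policies renders these values convex.

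The main obstacle is to assemble the composed correspondence $\Gamma(\bm{\nu}) \coloneqq \{\,\text{flow induced by }\pi : \pi \in \Phi^{\mathrm{aug}}(\bm{\nu})\,\}$ so that Kakutani--Fan--Glicksberg applies, i.e.\ with values that are nonempty, convex and closed and with a closed graph. The subtle point is convexity: because $\Psi^{\mathrm{aug}}$ in \Cref{defn:mfne_mcdm} feeds its own induced mean field into the kernel, it is not affine in $\pi$. I would circumvent this by using the equivalent ``frozen-flow'' forward map, in which the external flow $\bm{\mu}^{\nu}$ is held fixed inside $p_y$ while the optimal (randomised) policy is pushed forward; this map is affine in the policy, so $\Gamma(\bm{\nu})$ is the affine image of the convex set $\Phi^{\mathrm{aug}}(\bm{\nu})$, hence convex and weakly closed, and its fixed points coincide with those of $\Psi^{\mathrm{aug}} \circ \Phi^{\mathrm{aug}}$. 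Verifying that upper hemicontinuity survives this composition is where the continuity of $\bm{\nu} \mapsto \bm{\mu}^{\nu}$ is indispensable, as it controls how perturbations of the flow propagate through $p_y$ and $r_y$. Once these properties are in place, Kakutani--Fan--Glicksberg yields $\bm{\nu}^{*} \in \Gamma(\bm{\nu}^{*})$, which by construction and \Cref{prop:equivalence_aug} is an MFNE in the sense of \Cref{defn:mfne_mcdm}, equivalently \Cref{defn:mfne}.
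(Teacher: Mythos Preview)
Your proposal is correct and follows essentially the same approach as the paper: reduce to the fully observable augmented game on the compact space $\tilde{\mathcal{Y}}$, verify that $p_y$ is weakly continuous (via \Cref{prop:lip_cont} and the assumptions), and obtain existence through a Kakutani-type fixed point argument in the style of \cite{saldi_mfg_discrete, saldi2019approximate}. The paper's proof is far more terse---it simply notes weak continuity of $p_y$ and invokes \cite[Theorem 3.1]{saldi2019approximate} directly---whereas you unpack the mechanics (weak topology for compactness, Berge's theorem for the best-response correspondence, the frozen-flow device to recover convexity, and the explicit weak continuity of $\nu_t \mapsto \bm{\mu}^{\nu}_t$ replacing the barycenter map); these are precisely the ingredients underlying the cited result, so your sketch and the paper's citation amount to the same argument at different levels of detail.
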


\begin{proof}
    By \Cref{prop:lip_cont}, the augmented transition kernel $p_y$ is weakly continuous. Together with the aforementioned assumptions, we can apply \cite[Theorem 3.1]{saldi2019approximate}, which derives the existence of MFNE via an application of the Kakutani fixed point theorem. 
\end{proof}

\begin{rem}
When computing the best-response for a representative player, the measure flow $\bm{\nu}$ is considered as fixed. Any best-response policy is therefore Markovian in $\mathcal{Y}$: the extended augmentation to  $\tilde{\mathcal{Y}}$ is only required for the measure flow map.
\end{rem}

\section{Regularised MFG-MCDM} \label{sec:regularise}

It is well-known that in MFGs, the MFNE need not be unique, and that in general the fixed point operator given by ${\Psi^{\mathrm{aug}} \circ \Phi^{\mathrm{aug}}}$ is not contractive \cite{cui2021approximately}. In order to compute an approximate MFNE, we mirror the approaches of \cite{cui2021approximately, saldi_regularisation} and consider an entropy regularised game. For $P, Q$ in $\Pcal(U)$, the Kullback–Leibler (KL) divergence of $P$ from $Q$ is defined to be
\begin{align*}
    D_{KL}(P \Vert Q) \coloneqq \begin{cases}\int_U \log \left(\frac{\de P}{\de Q}(u) \right) Q(\de u), & P \ll Q, \\
    \infty, & \mbox{otherwise},
    \end{cases}
\end{align*}
where $\frac{\de P}{\de Q}$ is the Radon--Nikodym derivative of $P$.
\begin{ass}
    Throughout the rest of the paper, we assume that any reference measure $q \in \Pcal(U)$ has full support on $U$.
\end{ass} 
With $\eta > 0$ and $\bm{\nu} \in \Pcal(\mathcal{\tilde{Y}})^{\infty}$, and $q \in \Pcal(U)$ acting as a reference measure, we consider the regularised objective function $J_{\eta, \bm{\nu}}: \mathcal{A}_{\rm DM} \to \R$, defined by
\begin{align*}
    J_{\eta, \bm{\nu}}(\pi) \coloneqq \E^{\pi}_{q_0}\left[\sum^{\infty}_{t=0} \gamma^{t}\left( r_y \big(y_t, u_t, \bm{\mu}^{\nu}_t\big) - \eta D_{KL} (\pi_t \Vert q) \right) \right],
\end{align*}
and the corresponding regularised optimisation problem with the value function $J^{*}_{\eta,\bm{\nu}}: \N \times \mathcal{Y} \to \R$, defined by
\begin{align*}
    J^{*}_{\eta,\bm{\nu}}(t,y) \coloneqq  \sup_{\pi \in \mathcal{A}_{\rm DM}} \E^{\pi}_{t,y}\left[ \sum^{\infty}_{s=t} \gamma^{s-t} \left( r_y \big(y_s, u_s, \bm{\mu}^{\nu}_s\big) - \eta D_{KL} (\pi_s \Vert q) \right)\right].
\end{align*}

In this regularised game, optimal policies are of softmax form as described next.

\begin{thm}\label{thm_dpp_regularise}
    The value function $J^{*}_{\eta,\bm{\nu}}$ is the unique solution to the Bellman equation
    \begin{align}\label{eq:regularised_dpp}
        J^{*}_{\eta,\bm{\nu}}(t, y) = \sup_{\hat{q} \in \Pcal(U)} \int_{U} \bigg( r_y \big(y_t, u, \bm{\mu}^{\nu}_t\big) &- \eta \log \frac{\de \hat{q}}{\de q}(u)  \nonumber \\
         &  + \gamma \int_{\tilde{\mathcal{Y}}} J^{*}_{\eta,\bm{\nu}}(t+1, y^{\prime}) p_y\big(\de y^{\prime}\mid y, u, \bm{\mu}^{\nu}_t\big) \bigg) \hat{q}(\de u),
    \end{align}
    and satisfies
    \begin{align*}
        J^{*}_{\eta,\bm{\nu}}(t,y) = \eta \log \left(\int_U\exp\left( \frac{ Q^{*}_{\eta,\bm{\nu}}(t,y,u)}{\eta} \right) q(\de u) \right),
    \end{align*}
    where the function $Q^{*}_{\eta,\bm{\nu}}: \N \times \mathcal{Y} \times U$, defined by
     \begin{align}\label{regularised_Q_func}
     Q^{*}_{\eta,\bm{\nu}}(t,y,u) \coloneqq r_y\big(y, u,  \bm{\mu}^{\nu}_t\big) + \gamma \int_{\tilde{\mathcal{Y}}} J^{*}_{\eta,\bm{\nu}}(t+1, y^{\prime}) p_y\big(\de y^{\prime}\mid y, u, \bm{\mu}^{\nu}_t\big),
    \end{align}
    is bounded and measurable. Moreover, the softmax policy $\pi^{\rm soft}$, defined by
    \begin{align*}
        \pi^{\rm soft}_t(\de u \mid y) \coloneqq \frac{\exp\left(  Q^{*}_{\eta,\bm{\nu}}(t,y,u)/\eta\right)}{\int_U \exp\left( Q^{*}_{\eta,\bm{\nu}}(t,y,\pr{u})/\eta\right) q(\de \pr{u})} q(\de u)
    \end{align*}
    is the unique optimal policy that attains the supremum in \eqref{eq:regularised_dpp}.
\end{thm}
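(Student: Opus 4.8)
The plan is to reduce the infinite-horizon regularised control problem to a fixed-point problem for a Bellman operator on bounded functions, treating the entropic penalty by a pointwise variational principle and then running a verification argument.

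First I would isolate the per-step optimisation. For a fixed bounded measurable $g: U \to \R$, the Gibbs variational principle (equivalently the Donsker--Varadhan formula) gives
\begin{align*}
\sup_{\hat q \in \Pcal(U)} \int_U \Big( g(u) - \eta \log\tfrac{\de \hat q}{\de q}(u)\Big)\hat q(\de u) = \eta \log \int_U \exp\big(g(u)/\eta\big)\, q(\de u),
\end{align*}
with the supremum attained \emph{uniquely} by the Gibbs measure $\hat q^{*}(\de u) \propto \exp(g(u)/\eta)\, q(\de u)$. I would prove this lemma directly: rewriting the objective shows the gap to the right-hand side equals $-\eta D_{KL}(\hat q \Vert \hat q^{*})$, which is $\le 0$ with equality iff $\hat q = \hat q^{*}$, yielding both attainment and uniqueness from the strict convexity of $D_{KL}(\cdot \Vert q)$.

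Next I would define the Bellman operator $T$ on the Banach space of bounded measurable functions on $\N \times \mathcal{Y}$ (with the sup norm) by taking $(TJ)(t,y)$ to be the right-hand side of \eqref{eq:regularised_dpp} with $J$ in place of $J^{*}_{\eta,\bm{\nu}}$. Applying the variational lemma with $g(u) = r_y(y,u,\bm{\mu}^{\nu}_t) + \gamma \int_{\tilde{\mathcal{Y}}} J(t+1,y') p_y(\de y' \mid y, u, \bm{\mu}^{\nu}_t)$ identifies $(TJ)(t,y)$ with the log-sum-exp expression and its maximiser with the softmax kernel built from $g$. Since $s \mapsto \eta \log \int e^{s/\eta}\,\de q$ is a sup-norm non-expansion and $g$ depends on $J$ only through $\gamma \int J(t+1,\cdot)\,p_y$, the operator $T$ is a $\gamma$-contraction. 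As $r_y$ is bounded by $M_R$ (\Cref{prop:lip_cont}), $T$ preserves boundedness, so Banach's fixed point theorem yields the unique bounded solution of \eqref{eq:regularised_dpp}, which a posteriori satisfies $\lvert J \rvert \le M_R/(1-\gamma)$; hence $Q^{*}_{\eta,\bm{\nu}}$ from \eqref{regularised_Q_func} is bounded. Measurability of the value function, of $Q^{*}_{\eta,\bm{\nu}}$, and of $\pi^{\rm soft}$ follows from the weak continuity of $p_y$ and continuity of $r_y$ under \Cref{ass_compacct}, via a measurable-selection argument as used for the unregularised dynamic programming principle.

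Finally I would verify that this fixed point is the value function and that $\pi^{\rm soft}$ is the unique optimiser. Because $D_{KL}\ge 0$ gives the pointwise bound $r_y - \eta D_{KL}(\pi_t \Vert q) \le r_y \le M_R$, telescoping the Bellman inequality along any $\pi \in \mathcal{A}_{\rm DM}$ and passing to the limit (justified by $\gamma$-discounting and uniform boundedness) shows $J_{\eta,\bm{\nu}}(\pi) \le J^{*}_{\eta,\bm{\nu}}(0,\cdot)$, while substituting $\pi^{\rm soft}$ turns each inequality into an equality by the attainment clause of the variational lemma; uniqueness of the optimiser is inherited from the pointwise uniqueness of the Gibbs maximiser. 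Admissibility is automatic, since $\pi^{\rm soft}$ is feedback in $y \in \mathcal{Y}$ and therefore lies in $\mathcal{A}_{\rm DM}$ by \Cref{prop:equivalence_aug}. The main obstacle I anticipate is precisely this verification step in infinite horizon — ensuring convergence of the telescoped regularised rewards and that the supremum over the constrained class $\mathcal{A}_{\rm DM}$ is genuinely attained by the feedback softmax policy — together with the measurability of the value function, which I would handle with the standard measurable-selection machinery for MDPs with weakly continuous kernels.
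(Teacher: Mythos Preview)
Your proposal is correct and follows the standard route for entropy-regularised DPP: the Gibbs/Donsker--Varadhan variational identity, a $\gamma$-contraction for the Bellman operator on bounded functions of $(t,y)$, and a verification/telescoping argument. The paper itself does not give a self-contained proof but simply cites \cite[Theorem B.1]{kerimkulov2023fisher} for the stationary case and remarks that the non-stationary extension is straightforward; your sketch is exactly the kind of argument that reference would unpack, so the approaches coincide in substance.
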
 

\begin{proof}
This is a specific case of the dynamic programming principle for a full information problem. We refer to \cite[Theorem B.1]{kerimkulov2023fisher} for a self-contained exposition of the idea of the proof for the stationary problem, the extension of which to the non-stationary case is straightforward.
\end{proof}

We next define the regularised MCDM-MFNE by an analogous fixed point criterion.
\begin{defn}\label{defn:regularised MFNE-MCDM}
Let $\bm{\nu}=(\nu_t)_t \in \Pcal(\tilde{\mathcal{Y}})^{\infty}$ and $\eta > 0$.
\begin{enumerate}
    \item[(i)] We define the map $\Phi_{\eta}: \Pcal(\tilde{\mathcal{Y}})^{\infty} \to \mathcal{A}_{\rm DM}$ to be the best-response map from \Cref{thm_dpp_regularise}, given by
        \begin{align*}
            \Phi_{\eta}(\bm{\nu})_t(\de u\mid y) \coloneqq \frac{\exp\left(  Q^{*}_{\eta,\bm{\nu}}(t,y,u)/\eta\right)}{\int_U \exp\left( Q^{*}_{\eta,\bm{\nu}}(t,y,u)/\eta\right) q(\de u)} q(\de u),\quad t \geq 0.
        \end{align*}
    \item[(ii)] We define the map $\Psi^{\mathrm{aug}}: \mathcal{A}_{\rm DM} \to \Pcal(\tilde{\mathcal{Y}})^{\infty}$ to be the measure flow map, where $\Psi^{\mathrm{aug}}(\pi)_0 = \nu_0$ and
        \begin{align*}
            \Psi^{\mathrm{aug}}(\pi)_{t+1}(\cdot) =\int_{\tilde{\mathcal{Y}}} \int_U p_y\Big(\cdot \mid y, u, \bm{\mu}_t^{\Psi^{\mathrm{aug}}(\pi)} \Big) \pi_t(\de u \mid y) \Psi^{\mathrm{aug}}(\pi)_t(\de y), \quad t \geq 0.
        \end{align*}
    \item[(iii)] A regularised MFNE for the MCDM problem $(\pi^{*}, \bm{\nu}^{*})\in \mathcal{A}_{\rm DM} \times \Pcal(\tilde{\mathcal{Y}})^{\infty}$ is defined by a fixed point $\bm{\nu}^{*}$ of $\Psi^{\mathrm{aug}} \circ \Phi_{\eta}$, for which $\pi^{*}  = \Phi_{\eta}(\bm{\nu}^{*})$ (best policy response to a given measure flow) and $\bm{\nu}^{*} = \Psi^{\mathrm{aug}}(\pi^{*})$ (measure flow induced by policy) holds.
\end{enumerate}
\end{defn}

Our next step is to show that the fixed point operator $\Psi^{\mathrm{aug}} \circ \Phi_{\eta}$ forms a contraction mapping under a suitable choice of metric and regularisation parameter $\eta$, such that the iteration of these maps converges towards the unique fixed point, which is the regularised MFNE. We combine the approaches of \cite{cui2021approximately, saldi_regularisation}, extending their proof to the case of a non-stationary infinite horizon problem on general Polish spaces with time-varying measure flows, as well as the inclusion of the map $\nu_t \mapsto \bm{\mu}^{\nu}_t$ in the definitions of $\Phi_{\eta}$ and $\Psi^{\mathrm{aug}}$.

In order to demonstrate contraction of the regularised iterations, we show next a series of propositions regarding the Lipschitz continuity of the individual mappings. We consider a truncation at some finite time horizon $T$, defining the functions $\JTstar_{\eta, \bm{\nu}}$ by
\begin{align*}
    \JTstar_{\eta, \bm{\nu}}(t,y) \coloneqq  \begin{cases}\sup_{\pi \in \mathcal{A}_{\rm DM}} \E^{\pi}_{t,y}\left[ \sum^{T}_{s=t} \gamma^{s-t} \left( r_y \big(y_s, u_s, \bm{\mu}^{\nu}_s\big) - \eta D_{KL} (\pi_s \Vert q) \right)\right], &  t \leq T, \\
    0, & t > T.
    \end{cases}
\end{align*}
Similarly, let us define the truncated $Q$-functions by
\begin{align*}
    \QTstar_{\eta,\bm{\nu}}(t,y,u) \coloneqq \begin{cases} r_y\big(y, u,  \bm{\mu}^{\nu}_t\big) + \gamma \int_{\tilde{\mathcal{Y}}} \JTstar_{\eta,\bm{\nu}}(t+1, y^{\prime}) p_y\big(\de y^{\prime}\mid y, u, \bm{\mu}^{\nu}_t\big), & t \leq T, \\
    0, & t >T.
    \end{cases}
\end{align*}
\begin{prop}\label{prop:dpp_truncated_q}
    For $0 \leq t <T$, the truncated $Q$-functions satisfy
\begin{align*}
     \QTstar_{\eta,\bm{\nu}}(t,y,u) &= r_y\big(y, u,  \bm{\mu}^{\nu}_t\big) \nonumber \\
     &\quad + \gamma \eta \int_{\tilde{\mathcal{Y}}} \log \left( \int_U \exp \left(\frac{\QTstar_{\eta,\bm{\nu}}(t+1, \pr{y}, \pr{u})}{\eta} \right) q(\de \pr{u})\right) p_y(\de \pr{y} \mid y, u,  \bm{\mu}^{\nu}_t), \\
     \QTstar_{\eta,\bm{\nu}}(t,y,u) &= r_y\big(y, u,  \bm{\mu}^{\nu}_t\big) \nonumber \\
     &\quad+ \gamma \eta  \int_{\tilde{\mathcal{Y}}}   \log \left(\int_U\exp \left(\frac{Q^{T-1,*}_{\eta,\bm{\nu}}(t, \pr{y}, \pr{u})}{\eta} \right) q(\de \pr{u})\right) p_y(\de \pr{y} \mid y, u,  \bm{\mu}^{\nu}_t).
\end{align*}
\end{prop}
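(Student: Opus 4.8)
The plan is to deduce both identities from one auxiliary relation, namely the truncated log-sum-exp representation
\begin{align*}
    \JTstar_{\eta,\bm{\nu}}(t,y) = \eta \log\left( \int_U \exp\left( \frac{\QTstar_{\eta,\bm{\nu}}(t,y,u)}{\eta} \right) q(\de u) \right), \qquad t \leq T,
\end{align*}
which is the finite-horizon analogue of the identity in \Cref{thm_dpp_regularise}. Since $r_y$ is bounded (\Cref{prop:lip_cont}), $\JTstar_{\eta,\bm{\nu}}$ and $\QTstar_{\eta,\bm{\nu}}$ are bounded and measurable, so every integral below is well defined. I would prove this relation by backward induction on $t$. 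At $t=T$ the convention $\JTstar_{\eta,\bm{\nu}}(T+1,\cdot)=0$ gives $\QTstar_{\eta,\bm{\nu}}(T,y,u)=r_y(y,u,\bm{\mu}^{\nu}_T)$, and the claim becomes the Gibbs (Donsker--Varadhan) variational identity
\begin{align*}
    \sup_{\hat{q}\in\Pcal(U)} \int_U \left( g(u) - \eta \log\frac{\de\hat{q}}{\de q}(u) \right)\hat{q}(\de u) = \eta\log\int_U \exp\left(\frac{g(u)}{\eta}\right) q(\de u)
\end{align*}
applied with $g=r_y(y,\cdot,\bm{\mu}^{\nu}_T)$, whose unique maximiser is the softmax measure. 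For the inductive step I would invoke the truncated version of the Bellman equation \eqref{eq:regularised_dpp}, proved by the same backward argument underlying \Cref{thm_dpp_regularise} (and even more elementary, as the terminal condition is $0$ and no fixed-point step is needed); its bracketed integrand, stripped of the entropy term, is by definition exactly $\QTstar_{\eta,\bm{\nu}}(t,y,u)$, so applying the variational identity once more yields the relation at time $t$.

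Granting the above, the first identity is immediate. For $0\leq t<T$ we have $t+1\leq T$, so I substitute the log-sum-exp expression for $\JTstar_{\eta,\bm{\nu}}(t+1,\pr{y})$ into the defining formula $\QTstar_{\eta,\bm{\nu}}(t,y,u)=r_y(y,u,\bm{\mu}^{\nu}_t)+\gamma\int_{\tilde{\mathcal{Y}}}\JTstar_{\eta,\bm{\nu}}(t+1,\pr{y})\,p_y(\de\pr{y}\mid y,u,\bm{\mu}^{\nu}_t)$ and read off the claim.

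For the second identity I would first record that, by the first identity, the truncated $Q$-functions satisfy a common one-step recursion $W(t,\cdot)=\mathcal{T}_t[W(t+1,\cdot)]$, where $\mathcal{T}_t$ sends a bounded measurable $W$ to $r_y(\cdot,\bm{\mu}^{\nu}_t)+\gamma\eta\int_{\tilde{\mathcal{Y}}}\log(\int_U\exp(W(\pr{y},\pr{u})/\eta)q(\de\pr{u}))\,p_y(\de\pr{y}\mid\cdot,\bm{\mu}^{\nu}_t)$. Both $\QTstar_{\eta,\bm{\nu}}(t+1,\cdot)$ and $Q^{T-1,*}_{\eta,\bm{\nu}}(t,\cdot)$ are obtained by iterating operators of this type against terminal reward data, so I would prove $\QTstar_{\eta,\bm{\nu}}(t+1,\pr{y},\pr{u})=Q^{T-1,*}_{\eta,\bm{\nu}}(t,\pr{y},\pr{u})$ by backward induction and then insert it into the first identity. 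The main obstacle is not analytic but the careful bookkeeping of the time-inhomogeneous flow: the operators and terminal data entering $\QTstar_{\eta,\bm{\nu}}(t+1,\cdot)$ carry the flow indices $\{t+1,\dots,T\}$, whereas those entering $Q^{T-1,*}_{\eta,\bm{\nu}}(t,\cdot)$ carry $\{t,\dots,T-1\}$; matching the two windows is precisely a one-step shift of the measure flow, and it is this alignment---rather than any estimate---that must be verified with care (and, strictly, made explicit by shifting $\bm{\nu}$ by one time step). Everything else then follows directly from the variational identity and the definitions.
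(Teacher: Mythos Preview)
Your approach is exactly the paper's, just unpacked: the paper's entire proof reads ``This follows directly from \Cref{thm_dpp_regularise} and by the definition of $\QTstar_{\eta,\bm{\nu}}$,'' and your plan---establish the truncated log-sum-exp identity $\JTstar_{\eta,\bm{\nu}}(t,y)=\eta\log\int_U\exp(\QTstar_{\eta,\bm{\nu}}(t,y,u)/\eta)\,q(\de u)$ via the Gibbs variational formula, then substitute into the defining relation for $\QTstar_{\eta,\bm{\nu}}$---is precisely what that one-liner means.

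Your caution about the second identity is warranted and worth flagging. The equality $\QTstar_{\eta,\bm{\nu}}(t+1,\cdot)=Q^{T-1,*}_{\eta,\bm{\nu}}(t,\cdot)$ is automatic for time-homogeneous data, but here the reward and kernel at the two sides carry $\bm{\mu}^{\nu}_{t+1}$ versus $\bm{\mu}^{\nu}_t$, so the displayed identity as written is literally correct only after a one-step shift of the flow, exactly as you note. The paper's terse proof does not address this bookkeeping, and the downstream use in the induction of the next lemma relies on it implicitly; your proposal is more careful on this point than the paper itself.
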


\begin{proof}
    This follows directly from \Cref{thm_dpp_regularise} and bythe definition of $\QTstar_{\eta, \bm{\nu}}$.
\end{proof}

It is a standard result that the value function and $Q$-functions in finite horizon converges pointwise to their infinite horizon counterparts \cite[Section 4.2]{hernandezlerma_1}. In particular, we will use the fact that $\JTstar_{\eta,\bm{\nu}}$ and 
$\QTstar_{\eta,\bm{\nu}}$ converges pointwise towards $J^{*}_{\eta,\bm{\nu}}$ and $Q^{*}_{\eta,\bm{\nu}}$ respectively in the analysis of the remainder of the section.
\begin{lemma}\label{lemma:q_bound}
For any $\bm{\nu}, \hat{\bm{\nu}} \in \Pcal(\tilde{\mathcal{Y}})^{\infty}$, and for all $(t,y,u) \in \N \times \mathcal{Y} \times U$,  $\lvert \QTstar_{\eta,\nu}(t,y,u) \rvert \leq q^{*}$ and $\lvert Q^{*}_{\eta,\bm{\nu}}(t,y,u) \rvert \leq q^{*}$, where $q^{*} \coloneqq M_R / ( 1- \gamma)$.
\end{lemma}
\begin{proof}
First, we have $\QTstar_{\eta,\bm{\nu}}(t,\cdot,\cdot) = 0$ for $t >T$, and 
\begin{align*}
    \lvert \QTstar_{\eta,\bm{\nu}}(T,y,u) \rvert = \lvert r_y\big(y, u,  \bm{\mu}^{\nu}_T\big) \rvert \leq M_R \eqqcolon q_{T,T}.
\end{align*}
Then, for each $t < T$,
\begin{align*}
    \lvert \QTstar_{\eta,\bm{\nu}}(t,y,u) \rvert & \leq M_R + \gamma \eta \sup_{\pr{y} \in \tilde{\mathcal{Y}}} \left\lvert  \log \left( \int_U \exp \left(\frac{\QTstar_{\eta,\bm{\nu}}(t+1, \pr{y}, \pr{u})}{\eta}\right) q(\de \pr{u})\right) \right\rvert \nonumber \\
    &\leq M_R + \gamma \eta \left(\frac{q_{T,t+1}}{\eta} \right) \nonumber \\
    & = M_R + \gamma q_{T,t+1} \eqqcolon q_{T,t}
\end{align*}
where $q_{T,t+1}$ is the bound for $\QTstar_{\eta,\bm{\nu}}(t+1,y,u)$. As $T \to \infty$, $q_{T,0}$ converges to the fixed point $q^{*}$ of the map $x \mapsto \gamma x + M_R$. Moreover, for all $T > 0$ and $ 0 \leq t \leq T$, $q_{T,t} \leq q^{*}$. By pointwise convergence, the other bound for $Q^{*}_{\eta, \bm{\nu}}$ follows by sending $T \to \infty$.
\end{proof}

Next, by induction we conclude the following statement:

\begin{lemma}
     Let $\bm{\nu}, \hat{\bm{\nu}} \in \Pcal(\tilde{\mathcal{Y}})^{\infty}$. Then 
     for each $T \geq 0$, the truncated $Q$-functions satisfies
     \begin{align}\label{eq_qn_lipschitz}
     \left\lvert \QTstar_{\eta, \bm{\nu}}(t,y,u) - \QTstar_{\eta, \hat{\bm{\nu}}}(t,y,u) \right \rvert \leq l_{T,t}\ \delta_{TV}(\nu_t, \hat{\nu}_t),\quad 0 \leq t \leq T,\ y \in \tilde{\mathcal{Y}},\ u \in U,\ \eta \geq 0,
    \end{align}
    where the doubly-indexed sequence $(l_{T,t})_{T,t}$ , with $0 \leq t \leq T$, is given by
    \begin{align*}
        l_{T,T} \coloneqq L_R L_M\quad \mathrm{and} \quad l_{T,t} \coloneqq \left(L_RL_M + \gamma \exp\left(\frac{2q^{*}}{\eta}\right) l_{T-1,t} + 2\gamma q^{*} L_PL_M \right).
    \end{align*}
\end{lemma}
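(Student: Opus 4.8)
The plan is to establish \eqref{eq_qn_lipschitz} by induction on the horizon $T$, using the \emph{second} (horizon-decrement) identity of \Cref{prop:dpp_truncated_q}, which expresses $\QTstar_{\eta, \bm{\nu}}(t,\cdot,\cdot)$ through $Q^{T-1,*}_{\eta,\bm{\nu}}(t,\cdot,\cdot)$ at the \emph{same} time index $t$. This choice is the crux of the argument: the flow $\bm{\nu}$ enters $\QTstar_{\eta,\bm{\nu}}(t,\cdot,\cdot)$ only through the reward $r_y(\cdot,\bm{\mu}^{\nu}_t)$, the kernel $p_y(\cdot\mid\cdot,\cdot,\bm{\mu}^{\nu}_t)$, and $Q^{T-1,*}_{\eta,\bm{\nu}}(t,\cdot,\cdot)$, all of which depend on $\bm{\nu}$ only via $\nu_t$ (by \Cref{defn:aug_to_underlying}); this is exactly why the right-hand side of \eqref{eq_qn_lipschitz} carries $\delta_{TV}(\nu_t,\hat{\nu}_t)$ alone. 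Had one instead used the first (time-shift) identity, the integrand would involve $Q$ at time $t+1$ and the bound would pick up $\delta_{TV}(\nu_{t+1},\hat{\nu}_{t+1})$, spoiling the statement. The stated recursion $l_{T,t}=L_RL_M+\gamma\exp(2q^{*}/\eta)\,l_{T-1,t}+2\gamma q^{*}L_PL_M$ mirrors this induction precisely, decrementing $T$ while holding $t$ fixed.

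For the base case I take $t=T$, where $\QTstar_{\eta,\bm{\nu}}(T,y,u)=r_y(y,u,\bm{\mu}^{\nu}_T)$. Composing the Lipschitz continuity of $r_y$ in the measure argument (\Cref{prop:lip_cont}(b), constant $L_R$) with that of the map $\nu_t\mapsto\bm{\mu}^{\nu}_t$ (\Cref{lem:nu_to_mu}, constant $L_M$) gives $|\QTstar_{\eta,\bm{\nu}}(T,y,u)-\QTstar_{\eta,\hat{\bm{\nu}}}(T,y,u)|\le L_RL_M\,\delta_{TV}(\nu_T,\hat{\nu}_T)=l_{T,T}\,\delta_{TV}(\nu_T,\hat{\nu}_T)$, which also handles $T=0$.

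For the inductive step I assume \eqref{eq_qn_lipschitz} at horizon $T-1$ for all $0\le t\le T-1$ and fix $0\le t<T$. Writing the difference $\QTstar_{\eta,\bm{\nu}}(t,y,u)-\QTstar_{\eta,\hat{\bm{\nu}}}(t,y,u)$ via the second identity of \Cref{prop:dpp_truncated_q} and inserting a telescoping term, I split it into three pieces with the abbreviation $G_{\bm{\nu}}(\pr{y}):=\log\int_U\exp(Q^{T-1,*}_{\eta,\bm{\nu}}(t,\pr{y},\pr{u})/\eta)\,q(\de\pr{u})$: (i) the reward difference $r_y(y,u,\bm{\mu}^{\nu}_t)-r_y(y,u,\bm{\mu}^{\hat{\nu}}_t)$, bounded by $L_RL_M\,\delta_{TV}(\nu_t,\hat{\nu}_t)$ exactly as in the base case; (ii) the kernel difference $\gamma\eta\big(\int G_{\bm{\nu}}\,\de p_y(\cdot\mid\bm{\mu}^{\nu}_t)-\int G_{\bm{\nu}}\,\de p_y(\cdot\mid\bm{\mu}^{\hat{\nu}}_t)\big)$; and (iii) the integrand difference $\gamma\eta\int (G_{\bm{\nu}}-G_{\hat{\bm{\nu}}})\,\de p_y(\cdot\mid\bm{\mu}^{\hat{\nu}}_t)$. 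For (ii), \Cref{lemma:q_bound} gives $|Q^{T-1,*}_{\eta,\bm{\nu}}|\le q^{*}$, so $G_{\bm{\nu}}$ takes values in $[-q^{*}/\eta,q^{*}/\eta]$ and its oscillation $\lambda(G_{\bm{\nu}})\le 2q^{*}/\eta$; applying \Cref{prop: maxmin_ineq} together with the kernel's Lipschitz dependence on the flow (\Cref{prop:lip_cont}(a), yielding $\delta_{TV}(p_y(\cdot\mid\bm{\mu}^{\nu}_t),p_y(\cdot\mid\bm{\mu}^{\hat{\nu}}_t))\le L_P\,\delta_{\mathrm{max}}(\bm{\mu}^{\nu}_t,\bm{\mu}^{\hat{\nu}}_t)\le L_PL_M\,\delta_{TV}(\nu_t,\hat{\nu}_t)$) bounds (ii) by $2\gamma q^{*}L_PL_M\,\delta_{TV}(\nu_t,\hat{\nu}_t)$, matching the last summand of $l_{T,t}$.

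The main obstacle is term (iii), the difference of the two log-sum-exp integrands, where the stated constant $\exp(2q^{*}/\eta)$ must emerge. I would bound $|G_{\bm{\nu}}(\pr{y})-G_{\hat{\bm{\nu}}}(\pr{y})|$ by combining two mean-value estimates (\Cref{mean_value_theorem}): since both inner integrals lie in $[\exp(-q^{*}/\eta),\exp(q^{*}/\eta)]$ by \Cref{lemma:q_bound}, the derivative of $\log$ contributes a factor $\exp(q^{*}/\eta)$, while the elementary inequality $|e^{a}-e^{b}|\le\exp(q^{*}/\eta)\,|a-b|$ for $a,b\le q^{*}/\eta$ contributes a second factor $\exp(q^{*}/\eta)$; with the inductive hypothesis $|Q^{T-1,*}_{\eta,\bm{\nu}}(t,\pr{y},\pr{u})-Q^{T-1,*}_{\eta,\hat{\bm{\nu}}}(t,\pr{y},\pr{u})|\le l_{T-1,t}\,\delta_{TV}(\nu_t,\hat{\nu}_t)$ this yields $|G_{\bm{\nu}}(\pr{y})-G_{\hat{\bm{\nu}}}(\pr{y})|\le \exp(2q^{*}/\eta)\,(l_{T-1,t}/\eta)\,\delta_{TV}(\nu_t,\hat{\nu}_t)$ uniformly in $\pr{y}$. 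Integrating against $p_y$ and multiplying by $\gamma\eta$ (the $\eta$'s cancel) gives the middle summand $\gamma\exp(2q^{*}/\eta)\,l_{T-1,t}$, and summing (i)--(iii) produces the claimed recursion for $l_{T,t}$, closing the induction. The delicate points are to keep every estimate uniform in $(\pr{y},\pr{u})$ so that $\delta_{TV}(\nu_t,\hat{\nu}_t)$ factors out cleanly, and to use precisely the crude $\exp(2q^{*}/\eta)$ Lipschitz constant for the log-sum-exp operator (rather than its sharp non-expansive constant), as that is what the stated form of $l_{T,t}$ requires.
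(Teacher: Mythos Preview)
Your proposal is correct and follows essentially the same route as the paper: induction on the horizon $T$ via the second (horizon-decrement) identity of \Cref{prop:dpp_truncated_q}, with the same three-term telescoping into a reward piece, a kernel-variation piece bounded through \Cref{prop: maxmin_ineq} and \Cref{prop:lip_cont}, and a log-sum-exp integrand piece. The only minor methodological difference is in the latter: the paper applies the Banach-space mean value theorem (\Cref{mean_value_theorem}) once to the functional $L(f)=\log\int_U e^f\,q$, computing its Fr\'echet derivative explicitly and then crudely bounding the resulting softmax expectation by $\exp(2q^{*}/\eta)\lVert h\rVert_\infty$, whereas you compose two scalar mean-value estimates (one for $\log$, one for $\exp$) to reach the same factor; both arguments are valid and, as you observe, deliberately forgo the sharp non-expansiveness of log-sum-exp in order to match the stated recursion for $l_{T,t}$.
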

   
\begin{proof}
    For the base case $T=0$, we have
    \begin{align*}
    \lvert Q^{0,*}_{\eta, \bm{\nu}}(0, y, u) - Q^{0,*}_{\eta, \hat{\bm{\nu}}}(0, y, u) \rvert = \lvert r_y\big(y, u,  \bm{\mu}^{\nu}_0\big) - r_y\big(y, u,  \bm{\mu}^{\hat{\nu}}_0\big) \rvert \leq L_R L_M \delta_{TV}(\nu_0, \hat{\nu}_0)
\end{align*}
Now assume the hypothesis holds for some $T\geq0$, and consider the truncated $Q$-functions with terminal time $T+1$. For the case $ t = T+1$, we have the desired inequality by the same argument as above, with $l_{T+1, T+1} = L_R L_M$. Next, for $ 0 \leq t \leq T$, we get
\begin{align*}
    & \left\lvert Q^{T+1,*}_{\eta, \bm{\nu}}(t,y,u) - Q^{T+1,*}_{\eta, \hat{\bm{\nu}}}(t,y,u)  \right\rvert\\
    \leq\ & \left\lvert r_y(y,u,  \bm{\mu}^{\nu}_t) -  r_y(y,u,  \bm{\mu}^{\hat{\nu}}_t)\right\rvert \\
    & + \gamma\eta \Bigg\lvert \int_{\tilde{\mathcal{Y}}} \log \left( \int_U \exp \left(\frac{Q^{T,*}_{\eta,\bm{\nu}}(t, \pr{y}, \pr{u})}{\eta} \right) q(\de \pr{u})\right) p_y(\de \pr{y} \mid y, u,  \bm{\mu}^{\nu}_t) \\
    & \qquad \quad - \int_{\tilde{\mathcal{Y}}}    \log \left(\int_U \exp \left(\frac{Q^{T,*}_{\eta,\hat{\bm{\nu}}}(t, \pr{y}, \pr{u})}{\eta} \right) q(\de \pr{u})\right) p_y(\de \pr{y} \mid y, u,  \bm{\mu}^{\hat{\nu}}_t) \Bigg\rvert  \\
    \leq\ & L_R\cdot \delta_{\mathrm{max}}( \bm{\mu}^{\nu}_t,\bm{\mu}^{\hat{\nu}}_t)\\
    & + \gamma\eta \sup_{\pr{y} \in \tilde{\mathcal{Y}}}\left\lvert \log \left( \int_U \exp \left(\frac{Q^{T,*}_{\eta,\bm{\nu}}(t, \pr{y}, \pr{u})}{\eta}\right) q(\de \pr{u})\right) -  \log \left(\int_U \exp \left(\frac{Q^{T,*}_{\eta,\hat{\bm{\nu}}}(t, \pr{y}, \pr{u})}{\eta} \right) q(\de \pr{u})\right)  \right\rvert \\
    & + \gamma \eta\ \delta_{TV}\left( p_y( \cdot \mid y, u, \bm{\mu}^{\nu}_t),\ p_y( \cdot \mid y, u,  \bm{\mu}^{\hat{\nu}}_t) \right) \cdot \\
    & \qquad  \left\lvert \log \left( \int_U\exp\left(\frac{Q^{T,*}_{\eta,\bm{\hat{\nu}}}(t, y_{\mathrm{max}}, \pr{u})}{\eta}\right) q(\de\pr{u})\right) - \log \left(\int_U\exp\left(\frac{Q^{T,*}_{\eta,\bm{\hat{\nu}}}(t, y_{\mathrm{min}}, \pr{u})}{\eta}\right) q(\de \pr{u})\right) \right\rvert\\
    \eqqcolon\ & L_R\cdot \delta_{\mathrm{max}}( \bm{\mu}^{\nu}_t,\bm{\mu}^{\hat{\nu}}_t) + \gamma \eta ( I_1 + I_2)
\end{align*}
where we used the fact that $\tilde{\mathcal{Y}}$ is compact and \Cref{prop: maxmin_ineq} to obtain the the last term in the last inequality for some $y_{\mathrm{max}}, y_{\mathrm{min}} \in \tilde{\mathcal{Y}}$. Then, by \Cref{lemma:q_bound} that $\lVert Q^{T,*}_{\eta,\bm{\nu}} \rVert_{\infty} \leq q^{*}$, we have
\begin{align*}
    I_2 \leq \frac{2}{\eta} q^{*} L_P\ \delta_{\mathrm{max}}( \bm{\mu}^{\nu}_t,\bm{\mu}^{\hat{\nu}}_t) \leq  \frac{2}{\eta} q^{*} L_P L_M \delta_{TV}(\nu_t, \hat{\nu}_t).
\end{align*}

To bound $I_1$, we use \Cref{mean_value_theorem} as follows. Let $C(U)$ be the Banach space of continuous functions from the compact set $U$ to $\R$, equipped with the sup norm. Let $L: C(U) \to \R$ be the functional defined by
$L(f) = \log \left( \int_U \exp(f(u)) q(\de u) \right)$. It is easy to verify that $L$ is Fr\'echet differentiable, and for $f \in C(U)$ the Fr\'echet derivative $dL_f$ is given by
\begin{align*}
    dL_f(h) = \frac{\int_U h(u) \exp (f(u)) q(\de u)}{\int_U \exp (f(u)) q(\de u)},\quad h \in C(U).
\end{align*}
By \Cref{mean_value_theorem}, there exists $t^{*} \in (0,1)$ such that for $f^{*}(\cdot) = (t^{*}Q^{T,*}_{\eta,\bm{\nu}}(t, \pr{y}, \cdot) - (1-t^{*})Q^{T,*}_{\eta,\hat{\bm{\nu}}}(t, \pr{y}, \cdot))/\eta$, we have
\begin{align*}
    I_1 &\coloneqq  \sup_{\pr{y} \in \tilde{\mathcal{Y}}} \lvert dL_{f^{*}} \{(Q^{T,*}_{\eta,\bm{\nu}}(t, \pr{y}, \cdot) - Q^{T,*}_{\eta,\hat{\bm{\nu}}}(t, \pr{y}, \cdot))/\eta \}\rvert\\
    &= \sup_{\pr{y} \in \tilde{\mathcal{Y}}} \left \lvert\frac{1}{\eta}\frac{\int_U (Q^{T,*}_{\eta,\bm{\nu}}(t, \pr{y}, u)- Q^{T,*}_{\eta,\hat{\bm{\nu}}}(t, \pr{y}, u)) \exp (f^{*}(u)) q(\de u)}{\int_U \exp (f^{*}(u)) q(\de u)}\right\rvert \\
    & \leq \frac{1}{\eta} \exp \left(\frac{2q^{*}}{\eta}\right) l_{T,t}\ \delta_{TV}(\nu_t, \hat{\nu}_t),
\end{align*}
where the last line follows from the induction assumption and the fact that $\lVert f^{*} \rVert_{\infty} \leq q^{*}/\eta$. Combining all the above, we complete the induction step by observing that
\begin{align*}
     \left\lvert Q^{T+1,*}_{\eta, \bm{\nu}}(t,y,u) - Q^{T+1,*}_{\eta, \hat{\bm{\nu}}}(t,y,u) \right \rvert \leq \left(L_RL_M + \gamma \exp\left(\frac{2q^{*}}{\eta}\right) l_{T,t} + 2\gamma q^{*} L_PL_M \right) \delta_{TV}(\nu_t, \hat{\nu}_t).
\end{align*}
\end{proof}

\begin{prop}\label{lem:qregstar_lip}
    Consider the metric $\delta_Q$ given by
    \begin{align*}
        \delta_Q(Q^{*}_{\eta, \bm{\nu}}, Q^{*}_{\eta, \hat{\bm{\nu}}}) &\coloneqq \sum^{\infty}_{t=0} \zeta^{-t} \sup_{\substack{y \in \tilde{\mathcal{Y}}\\ u \in U}} \left\lvert Q^{*}_{\eta, \bm{\nu}}(t,y,u) - Q^{*}_{\eta, \hat{\bm{\nu}}}(t,y,u) \right \rvert.
    \end{align*}
    Let $l_{\eta} \coloneqq (L_M(L_R + 2 \gamma q^{*}L_P))/(1- \gamma \exp(\frac{2q^{*}}{\eta}))$. Then for any $\eta > \frac{2M_R}{-(1-\gamma)\log \gamma}$, we have that
    \begin{align*}
     \delta_Q(Q^{*}_{\eta, \bm{\nu}}, Q^{*}_{\eta, \hat{\bm{\nu}}})  \leq l_{\eta} \delta_{\infty}(\bm{\nu}, \hat{\bm{\nu}}).
    \end{align*}
\end{prop}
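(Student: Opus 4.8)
The plan is to derive the stated bound from the per-horizon estimate \eqref{eq_qn_lipschitz} of the preceding lemma by passing to the limit $T \to \infty$, and then to sum the resulting time-wise bounds against the weights $\zeta^{-t}$. The only genuine work lies in understanding the asymptotic behaviour of the doubly-indexed constants $l_{T,t}$ as $T \to \infty$, and in verifying that the threshold imposed on $\eta$ guarantees this limit is finite and equal to $l_\eta$.

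First I would fix the evaluation time $t$ and analyse the sequence $(l_{T,t})_{T \geq t}$. Writing $\alpha := \gamma \exp(2q^{*}/\eta)$ and $\beta := L_M(L_R + 2\gamma q^{*} L_P)$, the recursion from the preceding lemma reads $l_{T+1,t} = \alpha\, l_{T,t} + \beta$ with initial value $l_{t,t} = L_R L_M$. Since the base value and the recursion are both independent of $t$, the sequence depends only on $T - t$; explicitly, $l_{T,t} = \alpha^{\,T-t} L_R L_M + \beta\,(1-\alpha^{\,T-t})/(1-\alpha)$. The assumption $\eta > 2M_R/(-(1-\gamma)\log\gamma)$ is, recalling $q^{*} = M_R/(1-\gamma)$, precisely equivalent to $2q^{*}/\eta < -\log\gamma$, i.e.\ to $\alpha < 1$. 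Hence the affine map $x \mapsto \alpha x + \beta$ is a contraction with fixed point $\beta/(1-\alpha) = l_\eta$; since $l_{t,t} = L_R L_M < l_\eta$, the sequence increases monotonically and $l_{T,t} \uparrow l_\eta$ as $T \to \infty$. In particular $\lim_{T\to\infty} l_{T,t} = l_\eta$ for every $t$.

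Next I would pass to the limit in \eqref{eq_qn_lipschitz}. Fixing $(t,y,u)$ and using the pointwise convergence $\QTstar_{\eta,\bm{\nu}}(t,y,u) \to Q^{*}_{\eta,\bm{\nu}}(t,y,u)$ (and likewise for $\hat{\bm{\nu}}$) recalled above, the left-hand side of \eqref{eq_qn_lipschitz} converges to $\lvert Q^{*}_{\eta,\bm{\nu}}(t,y,u) - Q^{*}_{\eta,\hat{\bm{\nu}}}(t,y,u)\rvert$, while the right-hand side converges to $l_\eta\,\delta_{TV}(\nu_t, \hat{\nu}_t)$. As this holds for every $y \in \tilde{\mathcal{Y}}$ and $u \in U$, and the bound on the right is uniform in $(y,u)$, taking the supremum gives
\begin{align*}
    \sup_{y \in \tilde{\mathcal{Y}},\, u \in U} \left\lvert Q^{*}_{\eta,\bm{\nu}}(t,y,u) - Q^{*}_{\eta,\hat{\bm{\nu}}}(t,y,u) \right\rvert \leq l_\eta\, \delta_{TV}(\nu_t, \hat{\nu}_t), \quad t \geq 0.
\end{align*}
Multiplying by $\zeta^{-t}$ and summing over $t \geq 0$, the left-hand side is exactly $\delta_Q(Q^{*}_{\eta,\bm{\nu}}, Q^{*}_{\eta,\hat{\bm{\nu}}})$, while the right-hand side equals $l_\eta \sum_{t\geq 0}\zeta^{-t}\delta_{TV}(\nu_t,\hat{\nu}_t) = l_\eta\,\delta_{\infty}(\bm{\nu},\hat{\bm{\nu}})$ by the definition \eqref{eq_flow_metric}, which is the claim.

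The main obstacle — indeed essentially the only subtle point — is the passage $l_{T,t} \to l_\eta$: one must check that the $\eta$-threshold forces the per-step factor $\alpha = \gamma\exp(2q^{*}/\eta)$ strictly below $1$, for otherwise the recursion would diverge and the limiting Lipschitz constant would be infinite, breaking the later contraction argument. Everything else is a limit interchange justified by pointwise convergence together with a uniform (in $y,u$) right-hand bound, followed by a weighted summation matching the definitions of $\delta_Q$ and $\delta_\infty$.
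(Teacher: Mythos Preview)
Your proposal is correct and follows essentially the same route as the paper: verify that the threshold on $\eta$ forces $\gamma\exp(2q^{*}/\eta)<1$, so that the recursion $l_{T+1,t}=\gamma\exp(2q^{*}/\eta)\,l_{T,t}+L_M(L_R+2\gamma q^{*}L_P)$ converges to its fixed point $l_\eta$ as $T\to\infty$; then pass to the limit in \eqref{eq_qn_lipschitz} via the pointwise convergence $\QTstar_{\eta,\bm{\nu}}\to Q^{*}_{\eta,\bm{\nu}}$, take the supremum over $(y,u)$, and sum against $\zeta^{-t}$. Your write-up is in fact slightly more explicit than the paper's (you give the closed form for $l_{T,t}$ and note the monotone increase), but the argument is the same.
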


\begin{proof}
Since $\frac{2M_R}{-(1-\gamma)\log \gamma} = -\frac{2q^{*}}{\log \gamma}$, $\eta > \frac{2M_R}{-(1-\gamma)\log \gamma}$ implies that $\gamma \exp\left(\frac{2q^{*}}{\eta}\right) < 1$. Now fix $t>0$ in \eqref{eq_qn_lipschitz} and send $T \to \infty$. The sequence $l_{T,t}$ converges towards to the unique fixed point of the map
\begin{align*}
    x \mapsto L_R L_M + \gamma \exp\left(\frac{2q^{*}}{\eta}\right) x +  2\gamma q^{*} L_P L_M,
\end{align*}
which is given by $l_{\eta}$, so that for each $t$,
\begin{align*}
     \left\lvert Q^{*}_{\eta, \bm{\nu}}(t,y,u) - Q^{*}_{\eta, \hat{\bm{\nu}}}(t,y,u) \right \rvert \leq l_{\eta}\ \delta_{TV}(\nu_t, \hat{\nu}_t).
\end{align*}
Since $l_{\eta}$ does not depend on $t$, we obtain the claimed upper bound
\begin{align*}
    \delta_Q(Q^{*}_{\eta, \bm{\nu}}, Q^{*}_{\eta, \hat{\bm{\nu}}}) &= \sum^{\infty}_{t=0} \zeta^{-t} \sup_{\substack{y \in \tilde{\mathcal{Y}}\\ u \in U}} \left\lvert Q^{*}_{\eta, \bm{\nu}}(t,y,u) - Q^{*}_{\eta, \hat{\bm{\nu}}}(t,y,u) \right \rvert \\
    & \leq \sum^{\infty}_{t=0} \zeta^{-t} l_{\eta}\ \delta_{TV}(\nu_t, \hat{\nu}_t)\\
    & = l_{\eta}\ \delta_{\infty}({\bm{\nu}, \hat{\bm{\nu}}}).
\end{align*}
\end{proof}

\begin{lemma}\label{prop_softmax_derivative}
    On the Banach space $C(U)$ (equipped with the sup norm $\lVert \cdot \rVert_{\infty}$), the map $I:C(U) \to C(U)$, given by
    \begin{align*}
        I(f)(\cdot) \coloneqq \frac{\exp(f(\cdot))}{\int_U \exp(f(u)) q(\de u)},
    \end{align*}
    is Fr\'echet differentiable. At each $f \in C(U)$, its Fr\`echet derivative $dI_f$ is given by
    \begin{align*}
        dI_f(h)(\cdot) = I(f)(\cdot)\left[h(\cdot) - \int_U h(u) I(f)(u) q(\de u) \right],\quad h \in C(U).
    \end{align*}
    The Fr\`echet derivative $dI_f \in L(C(U), C(U))$ is a bounded linear operator with operator norm $\lVert dI_f \rVert \leq 2 \lVert I(f)\rVert_{\infty}$.
\end{lemma}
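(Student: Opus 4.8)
The plan is to establish Fréchet differentiability directly from the definition, by expanding $I(f+h)$ to first order in $h$ and controlling the remainder uniformly on the compact set $U$. Writing $Z(f) \coloneqq \int_U \exp(f(u)) q(\de u)$ for the normalising constant, I would first record the elementary scalar bound $\lvert \exp(s) - 1 - s \rvert \leq \tfrac{1}{2}s^2 \exp(\lvert s \rvert)$, valid for all $s \in \R$; applied pointwise with $s = h(u)$ it gives $\exp(h) = 1 + h + R_1$ with $\lVert R_1 \rVert_{\infty} \leq \tfrac{1}{2}\lVert h \rVert_{\infty}^2 \exp(\lVert h \rVert_{\infty})$. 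This is essentially the only analytic input, and it holds uniformly in $u$ precisely because $U$ is compact and $h \in C(U)$ is bounded.

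Next I would substitute this expansion into both the numerator $\exp(f)\exp(h)$ and the denominator $\int_U \exp(f)\exp(h)\, q(\de \cdot)$ of $I(f+h) = \exp(f+h)/Z(f+h)$. The numerator becomes $\exp(f)(1 + h + R_1)$, while dividing the denominator by $Z(f)$ produces $1 + \langle I(f), h\rangle + R_2$, where I abbreviate $\langle I(f), h\rangle \coloneqq \int_U I(f)(u) h(u)\, q(\de u)$ and $\lvert R_2 \rvert = O(\lVert h \rVert_{\infty}^2)$ by the bound on $R_1$. Factoring out $I(f)$ and using the scalar expansion $(1+x)^{-1} = 1 - x + O(x^2)$ with $x = \langle I(f), h\rangle + R_2 = O(\lVert h \rVert_{\infty})$ (note $\lvert \langle I(f), h\rangle\rvert \leq \lVert h\rVert_{\infty}$ since $I(f) \geq 0$ integrates to one), I would collect the first-order term
\begin{align*}
    I(f+h) - I(f) = I(f)\left[ h - \langle I(f), h\rangle \right] + \Theta(h),
\end{align*}
and check that every cross term in the resulting product is $O(\lVert h \rVert_{\infty}^2)$ in sup norm. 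Since $\lVert I(f)\rVert_{\infty} < \infty$, the remainder $\Theta(h)$ is $o(\lVert h \rVert_{\infty})$ in $C(U)$, which identifies $dI_f$ with the claimed formula.

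Finally, linearity of $h \mapsto I(f)[h - \langle I(f), h\rangle]$ is immediate from the formula, and boundedness follows from the triangle inequality together with $\lvert \langle I(f), h\rangle \rvert \leq \lVert h \rVert_{\infty}$, yielding $\lVert dI_f(h) \rVert_{\infty} \leq \lVert I(f)\rVert_{\infty}(\lVert h\rVert_{\infty} + \lVert h\rVert_{\infty}) = 2\lVert I(f)\rVert_{\infty} \lVert h\rVert_{\infty}$, whence $\lVert dI_f\rVert \leq 2\lVert I(f)\rVert_{\infty}$. The main obstacle — indeed the only step requiring real care — is upgrading the formal first-order expansion to genuine Fréchet differentiability, that is, verifying that $\Theta(h)$ is $o(\lVert h\rVert_{\infty})$ in the sup norm rather than merely pointwise or in a Gâteaux sense. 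This hinges on obtaining all the $O(\lVert h\rVert_{\infty}^2)$ estimates uniformly in $u \in U$; compactness of $U$ and continuity of $f$ (so that $Z(f) > 0$ and $\exp(f)$ is bounded away from $0$ and $\infty$) are exactly what render this uniform control available.
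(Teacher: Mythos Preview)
Your proposal is correct and follows essentially the same approach as the paper: both expand $\exp(f+h)=\exp(f)(1+h+O(\lVert h\rVert_\infty^2))$, invert the denominator via $(1+x)^{-1}=1-x+O(x^2)$, multiply out to isolate the linear part $I(f)[h-\langle I(f),h\rangle]$, and then bound the operator norm using $\int_U I(f)\,q(\de u)=1$. If anything, your treatment is slightly more explicit about the uniform remainder bound $\lvert e^s-1-s\rvert\leq \tfrac12 s^2 e^{|s|}$ and about why uniformity in $u$ is the point that upgrades G\^ateaux to Fr\'echet differentiability, whereas the paper works directly with Landau notation.
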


\begin{proof}
    We use the capital $O$ for the Landau notation in the proof as follows. For functions  $g: \R \to \R$ and $G: \R_+ \to \R$, we say that $g(x) \coloneqq O(G(x))$ if there exists $M>0$ such that $\lvert g(x) \rvert \leq M \lvert G(x) \rvert$ as $x \to 0$. Now let $f \in C(U)$ and let $Z \coloneqq \int_U \exp(f(u)) q(\de u)$. Let $h \in C(U)$ be such that $\frac{1}{Z}\int_U h(u) \exp(f(u)) q(\de u) < 1$. By Taylor's approximation for $x \mapsto \exp(x)$ at the point $h(u)$, we get
    \begin{align*}
        \left[\int_U\exp(f(u) + h(u)) q(\de u)\right]^{-1} &= \left[\int_U\exp(f(u)) (1 + h(u) + O(h(u)^2)) q(\de u)\right]^{-1} \\
        & = \frac{1}{Z} \left[ 1 + \frac{\int_U h(u) \exp(f(u)) q(\de u)}{Z} + O(\lVert h \rVert_{\infty} ^2)\right]^{-1} \\
        & = \frac{1}{Z} \left(1 -  \frac{\int_U h(u) \exp(f(u)) q(\de u)}{Z} + O(\lVert h \rVert_{\infty} ^2)\right),
    \end{align*}
    using $\frac{1}{1+x} =  1 -x + O(x^2)$ for the last equality. Then we have 
    \begin{align*}
        I(f+h)(x) & = \frac{\exp(f(x))( 1 + h(x) + O(h(x)^2))}{Z}\left(1 -  \frac{\int_U h(u) \exp(f(u)) q(\de u)}{Z} + O(\lVert h \rVert_{\infty} ^2)\right) \\
        & = I(f)(x) + I(f)(x) h(x) - I(f)(x) \int_U h(u) I(f)(u) q(\de u) + O(\lVert h \rVert_{\infty}^2).
    \end{align*}
    Therefore, we obtain
    \begin{align*}
        \lim_{\lVert h \rVert_{\infty} \to 0} \frac{\lVert I(f+h) - I(f) - I(f)[h - \int_Uh(u)I(f)(u)q(\de u)] \rVert }{\lVert h \rVert_{\infty}} = 0,
    \end{align*}
    to conclude $dI_f(h)(\cdot) = I(f)(\cdot)\left[h(\cdot) - \int_U h(u) I(f)(u) q(\de u) \right]$, as claimed. Now for any $g \in C(U)$ and $ u \in U$, we have
    \begin{align*}
        \lvert dI_f(g)(u) \rvert & = \left \lvert I(f)(u)\left[g(u) - \int_U g(\pr{u})I(f)(\pr{u}) q(\de \pr{u}) \right] \right\rvert \\
        & \leq \lvert I(f)(u) g(u) \rvert + \lvert I(f)(u) \rvert \int_U\lvert g(\pr{u})I(f)(\pr{u})\rvert q(\de \pr{u}) \\
        & \leq 2 \lVert I(f) \rVert_{\infty} \lVert g \rVert_{\infty},
    \end{align*}
    where the last inequality uses the fact that $\int_U I(f)(u) q(\de u) = 1$, as the integrand is continuous on the compact set $U$, hence integrable. Therefore,
    \begin{align*}
        \lVert dI_f \rVert = \inf\big\{ c \geq 0 : \lVert dI_f(g) \rVert_{\infty} \leq c \lVert g \rVert_{\infty},\ g \in C(U) \big\} \leq 2 \lVert I(f) \rVert_{\infty}.
    \end{align*}
\end{proof}

\begin{prop}\label{cor:lip_best_response}
Let $\eta^{*}$ be a constant with $\eta^{*} > \frac{2M_R}{-(1-\gamma)\log \gamma}$. Then for any $\eta \geq \eta^{*}$, the map $\Phi_{\eta}$ is $K_{\eta}$-Lipschitz continuous with $K_{\eta} = \frac{2q^{*} l_{\eta^{*}}}{\eta^2} $.
\end{prop}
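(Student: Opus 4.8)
The plan is to write $\Phi_{\eta}$ as the composition of the measure-flow-to-$Q$-function map $\bm{\nu} \mapsto Q^{*}_{\eta,\bm{\nu}}$, whose Lipschitz continuity is already quantified in \Cref{lem:qregstar_lip}, with the pointwise softmax map, and to control the Lipschitz constant of the latter through the Fr\'echet-derivative estimate of \Cref{prop_softmax_derivative}. Recalling the softmax form of the best-response map, the density of $\Phi_{\eta}(\bm{\nu})_t(\cdot\mid y)$ with respect to the reference measure $q$ is exactly $I\big(Q^{*}_{\eta,\bm{\nu}}(t,y,\cdot)/\eta\big)$, where $I$ is the softmax functional of \Cref{prop_softmax_derivative}. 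Since both policies are absolutely continuous with respect to $q$, the density representation of total variation distance gives, for each fixed $t \geq 0$ and $y \in \tilde{\mathcal{Y}}$,
\begin{align*}
  \delta_{TV}\big(\Phi_{\eta}(\bm{\nu})_t(\cdot\mid y),\, \Phi_{\eta}(\hat{\bm{\nu}})_t(\cdot\mid y)\big) = \tfrac{1}{2}\int_U \big| I(f_y)(u) - I(\hat f_y)(u)\big|\, q(\de u) \leq \tfrac{1}{2}\big\| I(f_y) - I(\hat f_y)\big\|_{\infty},
\end{align*}
where $f_y \coloneqq Q^{*}_{\eta,\bm{\nu}}(t,y,\cdot)/\eta$ and $\hat f_y \coloneqq Q^{*}_{\eta,\hat{\bm{\nu}}}(t,y,\cdot)/\eta$, both viewed as elements of $C(U)$.

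Next I would bound $\| I(f_y) - I(\hat f_y)\|_{\infty}$ by the Banach-space mean value theorem (\Cref{mean_value_theorem}) applied to $I: C(U) \to C(U)$: there is a point $g^{*}_y$ on the segment joining $f_y$ and $\hat f_y$ with $\| I(f_y) - I(\hat f_y)\|_{\infty} \leq \| dI_{g^{*}_y}\|\, \| f_y - \hat f_y\|_{\infty}$, and \Cref{prop_softmax_derivative} supplies $\| dI_{g^{*}_y}\| \leq 2\| I(g^{*}_y)\|_{\infty}$. Since $\| f_y - \hat f_y\|_{\infty} = \eta^{-1}\sup_{u \in U}| Q^{*}_{\eta,\bm{\nu}}(t,y,u) - Q^{*}_{\eta,\hat{\bm{\nu}}}(t,y,u)|$, this produces the per-$(t,y)$ estimate
\begin{align*}
  \delta_{TV}\big(\Phi_{\eta}(\bm{\nu})_t(\cdot\mid y),\, \Phi_{\eta}(\hat{\bm{\nu}})_t(\cdot\mid y)\big) \leq \frac{\| I(g^{*}_y)\|_{\infty}}{\eta}\, \sup_{u \in U}\big| Q^{*}_{\eta,\bm{\nu}}(t,y,u) - Q^{*}_{\eta,\hat{\bm{\nu}}}(t,y,u)\big|,
\end{align*}
uniformly over $y \in \tilde{\mathcal{Y}}$. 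The a priori bound $\| Q^{*}_{\eta,\bm{\nu}}\|_{\infty},\, \| Q^{*}_{\eta,\hat{\bm{\nu}}}\|_{\infty} \leq q^{*}$ from \Cref{lemma:q_bound} forces $\| g^{*}_y\|_{\infty} \leq q^{*}/\eta$ along the \emph{entire} segment, which is what lets me control $\| I(g^{*}_y)\|_{\infty}$ uniformly and, together with the chain-rule factor $\eta^{-1}$ already present, account for the $\eta^{-2}$ and $q^{*}$ dependence advertised in $K_{\eta}$.

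Finally I would pass to the flow metrics by summation. Taking the supremum over $y$, multiplying by $\zeta^{-t}$ and summing over $t$ converts the left-hand side into $\delta_{\mathcal{A}}(\Phi_{\eta}(\bm{\nu}), \Phi_{\eta}(\hat{\bm{\nu}}))$ via \eqref{metric:policy}, and the right-hand side into a multiple of $\delta_Q(Q^{*}_{\eta,\bm{\nu}}, Q^{*}_{\eta,\hat{\bm{\nu}}})$; \Cref{lem:qregstar_lip} then dominates this by a multiple of $l_{\eta}\,\delta_{\infty}(\bm{\nu}, \hat{\bm{\nu}})$. The hypothesis $\eta^{*} > 2M_R/(-(1-\gamma)\log\gamma)$ guarantees $\gamma\exp(2q^{*}/\eta^{*}) < 1$, so that $l_{\eta^{*}}$ is well defined, and since the denominator $1 - \gamma\exp(2q^{*}/\eta)$ of $l_{\eta}$ is increasing in $\eta$, one has $l_{\eta} \leq l_{\eta^{*}}$ for every $\eta \geq \eta^{*}$. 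Collecting all constants yields the stated Lipschitz bound with $K_{\eta} = 2q^{*}l_{\eta^{*}}/\eta^{2}$.

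The delicate step is the second paragraph. The mean value theorem must be used in its vector-valued form on $C(U)$, and the softmax derivative bound of \Cref{prop_softmax_derivative} must be valid uniformly along the whole segment joining $f_y$ and $\hat f_y$, not merely at its endpoints; it is precisely the uniform a priori estimate $q^{*}$ of \Cref{lemma:q_bound} that makes this possible and that ties the regularisation strength $\eta$ to the size of the constant. One must also check that each constant produced is genuinely independent of $(t,y)$, as only then does the $\zeta^{-t}$-weighted series telescope cleanly into $\delta_Q$ and, through \Cref{lem:qregstar_lip}, into $\delta_{\infty}$, without loss incurred in passing to suprema over $y$ and $u$.
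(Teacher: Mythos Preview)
Your proposal is correct and follows essentially the same route as the paper's own proof: both express the softmax policy density via the functional $I$ of \Cref{prop_softmax_derivative}, bound the total-variation distance by half the sup-norm difference of densities, apply the Banach-space mean value theorem together with the operator-norm estimate $\|dI_{g^{*}}\| \leq 2\|I(g^{*})\|_{\infty}$, control the intermediate point uniformly via the a~priori bound $q^{*}$ of \Cref{lemma:q_bound}, and then sum in $\zeta^{-t}$ to pass through \Cref{lem:qregstar_lip}. Your explicit justification of the monotonicity $l_{\eta} \leq l_{\eta^{*}}$ for $\eta \geq \eta^{*}$ is a useful detail that the paper leaves implicit.
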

\begin{proof}
    Given any $\bm{\nu} \in \Pcal(\tilde{\mathcal{Y}})^{\infty}$, $\Phi_{\eta}$ maps $\bm{\nu}$ to the softmax policy $ \pi^{\rm soft}$, given by
\begin{align*}
    \pi^{\rm soft}_{\bm{\nu},t}(\de u \mid y) \coloneqq \frac{\exp\left(  Q^{*}_{\eta,\bm{\nu}}(t,y,u)/\eta\right)}{\int_U \exp\left( Q^{*}_{\eta,\bm{\nu}}(t,y,\pr{u})/\eta\right) q(\de \pr{u})} q(\de u), \quad t \geq 0,
\end{align*}
where the denominator of $\pi^{\rm soft}_t(\de u \mid y)$ is finite, since $Q^{*}_{\eta,\bm{\nu}}$ is continuous and $U$ is compact. Recall from \eqref{metric:policy} that 
\begin{align*}
    \delta_{\mathcal{A}}(\Phi_{\eta}(\bm{\nu}), \Phi_{\eta}(\hat{\bm{\nu}}) ) = \sum^{\infty}_{t=0} \zeta^{-t} \sup_{y \in \tilde{\mathcal{Y}}} \delta_{TV}(\pi^{\rm soft}_{\bm{\nu},t}(\cdot \mid y), \pi^{\rm soft}_{\bm{\nu},t}(\cdot \mid y)), \quad \zeta >1.
\end{align*}
As $\tilde{\mathcal{Y}}$ is compact and $\pi^{\rm soft}_{\bm{\nu},t}$ is continuous in $y$, for any $\bm{\nu}, \hat{\bm{\nu}} \in \Pcal(\tilde{\mathcal{Y}})^{\infty}$ there exists some $\hat{y}$ such that
\begin{align*}
    & \delta_{\mathcal{A}}(\Phi_{\eta}(\bm{\nu}), \Phi_{\eta}(\hat{\bm{\nu}}) ) \\
    =\ & \sum^{\infty}_{t=0} \zeta^{-t}\ \delta_{TV}\left(  \pi^{\mathrm{soft}}_{\bm{\nu},t}(\cdot \mid \hat{y}),\ \pi^{\mathrm{soft}}_{\hat{\bm{\nu}},t}(\cdot\mid \hat{y}) \right) \\
    =\ & \sum^{\infty}_{t=0} \zeta^{-t} \frac{1}{2} \int_U \left\lvert \frac{\exp\left(  Q^{*}_{\eta,\bm{\nu}}(t,\hat{y},u)/\eta\right)}{\int_U \exp\left( Q^{*}_{\eta,\bm{\nu}}(t,\hat{y},\pr{u})/\eta\right) q(\de \pr{u})} - \frac{\exp\left(  Q^{*}_{\eta,\hat{\bm{\nu}}}(t,\hat{y},u)/\eta\right)}{\int_U \exp\left( Q^{*}_{\eta,\hat{\bm{\nu}}}(t,\hat{y},\pr{u})/\eta\right) q(\de \pr{u})}\right\rvert q(\de u) .
\end{align*}

We apply \Cref{prop_softmax_derivative} to the integrand, with the functions $f(u) \coloneqq Q^{*}_{\eta,\bm{\nu}}(t,y,u)/\eta$ and $\hat{f}(u) \coloneqq Q^{*}_{\eta,\hat{\bm{\nu}}}(t,y,u)/\eta$, to obtain
\begin{align*}
    & \delta_{\mathcal{A}}(\Phi_{\eta}(\bm{\nu}), \Phi_{\eta}(\hat{\bm{\nu}}) ) \\
    \leq\ & \sum^{\infty}_{t=0} \zeta^{-t}  \frac{\lVert I(f) - I(\hat{f}) \rVert_{\infty}}{2} \\
    \leq\ & \sum^{\infty}_{t=0} \zeta^{-t} \frac{1}{\eta} \sup_{u\in U} \lvert tQ^{*}_{\eta,\bm{\nu}}(t,\hat{y},u) + (1-t) Q^{*}_{\eta,\hat{\bm{\nu}}}(t,\hat{y},u)\rvert \lVert f - \hat{f}\rVert_{\infty} \\
    \leq\ & \sum^{\infty}_{t=0} \zeta^{-t} \frac{2q^{*} l_{\eta^{*}}}{\eta^2} \delta_{TV}(\nu_t, \hat{\nu}_t)\\
    =\ &\frac{2q^{*} l_{\eta^{*}}}{\eta^2} \delta_{\infty}(\bm{\nu}, \hat{\bm{\nu}}).
\end{align*}
\end{proof}

Next, we show that the measure flow map $\Psi^{\mathrm{aug}}$ from \Cref{defn:regularised MFNE-MCDM} is Lipschitz, under a suitable choice of the constant $\zeta$ in the metrics $\delta_{\infty}$ and $\delta_{\mathcal{A}}$ (cf. \eqref{eq_flow_metric} and \eqref{metric:policy}). Intuitively, given two policies, which are close in the sense of the metric $\delta_{\mathcal{A}}$, the corresponding measure flows will gradually drift apart at most by a bounded rate. The choice of $\zeta$ amounts to the weighting one gives to the current time over the distant future.

\begin{prop}\label{prop:lip_measure_flow}
    For $\zeta \in \N$ such that $\zeta > 2L_P L_M +2$ in the metric from $\delta_{\infty}$ \eqref{eq_flow_metric}, the map $\Psi^{\mathrm{aug}}: \mathcal{A}_{\rm DM} \to \Pcal(\tilde{\mathcal{Y}})^{\infty}$ (cf. \Cref{defn:regularised MFNE-MCDM}) is Lipschitz: for $\pi, \hat{\pi} \in \mathcal{A}_{\rm DM}$,
    \begin{align*}
        \delta_{\infty}(\Psi^{\mathrm{aug}}(\pi), \Psi^{\mathrm{aug}}(\hat{\pi})) \leq L_{\Psi} \delta_{\mathcal{A}}(\pi, \hat{\pi}),
    \end{align*}
    where
    \begin{align*}
        L_{\Psi} \coloneqq \frac{2L_P}{2L_P L_M +1} \left( \frac{\zeta}{\zeta - 2L_P L_M -2} + \frac{1}{\zeta-1} \right).
    \end{align*}
\end{prop}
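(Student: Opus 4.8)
The plan is to reduce the claim to a one-step contraction-type recursion for the total-variation distance between the two measure flows, and then to sum this recursion against the geometric weights $\zeta^{-t}$ defining $\delta_{\infty}$ in \eqref{eq_flow_metric} and $\delta_{\mathcal{A}}$ in \eqref{metric:policy}. Fix $\pi, \hat{\pi} \in \mathcal{A}_{\rm DM}$ and abbreviate $\nu_t := \Psi^{\mathrm{aug}}(\pi)_t$, $\hat{\nu}_t := \Psi^{\mathrm{aug}}(\hat{\pi})_t$, $a_t := \delta_{TV}(\nu_t, \hat{\nu}_t)$ and $s_t := \sup_{y \in \tilde{\mathcal{Y}}} \delta_{TV}(\pi_t(\cdot \mid y), \hat{\pi}_t(\cdot \mid y))$, so that $\delta_{\infty}(\Psi^{\mathrm{aug}}(\pi), \Psi^{\mathrm{aug}}(\hat{\pi})) = \sum_{t \ge 0} \zeta^{-t} a_t$ and $\delta_{\mathcal{A}}(\pi, \hat{\pi}) = \sum_{t \ge 0} \zeta^{-t} s_t$. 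Since both flows share the initial datum $\nu_0$ (cf.\ \Cref{defn:regularised MFNE-MCDM}(ii)), we have $a_0 = 0$, which is what will make the ensuing geometric bound summable.

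First I would establish the one-step estimate. Writing the update rule from \Cref{defn:regularised MFNE-MCDM}(ii) as $\nu_{t+1} = \nu_t K^{\nu}_t$ with the composed Markov kernel $K^{\nu}_t(\cdot \mid y) := \int_U p_y(\cdot \mid y, u, \bm{\mu}^{\nu}_t)\, \pi_t(\de u \mid y)$ and its analogue $\hat{K}_t(\cdot \mid y) := \int_U p_y(\cdot \mid y, u, \bm{\mu}^{\hat{\nu}}_t)\, \hat{\pi}_t(\de u \mid y)$, I would split
\begin{align*}
    a_{t+1} \le \delta_{TV}\big(\nu_t K^{\nu}_t,\, \hat{\nu}_t K^{\nu}_t\big) + \sup_{y \in \tilde{\mathcal{Y}}} \delta_{TV}\big(K^{\nu}_t(\cdot \mid y),\, \hat{K}_t(\cdot \mid y)\big).
\end{align*}
The first term is bounded by $a_t$, since integrating a signed measure of total mass zero against a Markov kernel is a contraction in total variation (equivalently, apply \Cref{prop: maxmin_ineq} to the integrand $K^{\nu}_t(A \mid \cdot) \in [0,1]$). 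For the second term I would decompose the difference of composed kernels into a change of the measure argument of $p_y$ and a change of policy: the former is controlled by the Lipschitz continuity of $p_y$ in its measure argument (which follows from \Cref{prop:lip_cont}(a) with constant $L_P = d_0 L_p$) applied to $\delta_{\mathrm{max}}(\bm{\mu}^{\nu}_t, \bm{\mu}^{\hat{\nu}}_t)$, which by the Lipschitz map $\nu_t \mapsto \bm{\mu}^{\nu}_t$ of \Cref{lem:nu_to_mu} is at most $L_P L_M\, a_t$; the latter is bounded by $s_t$ via \Cref{prop: maxmin_ineq}, again using $p_y(A \mid y, \cdot, \bm{\mu}^{\hat{\nu}}_t) \in [0,1]$. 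Collecting the pieces yields a recursion $a_{t+1} \le \beta a_t + c\, s_t$ with $\beta$ of order $L_P L_M$ and $c$ of order $L_P$; the threshold $\zeta > 2 L_P L_M + 2$ in the statement is chosen to guarantee $\zeta > \beta$ with room to spare.

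With the recursion in hand and $a_0 = 0$, I would unroll it to $a_t \le c \sum_{k=0}^{t-1} \beta^{\,t-1-k} s_k$, multiply by $\zeta^{-t}$, sum over $t$, and interchange the order of summation by Tonelli (all terms being nonnegative). The inner geometric series evaluates as $\sum_{t > k} \zeta^{-t} \beta^{\,t-1-k} = \zeta^{-k}/(\zeta - \beta)$, which converges precisely because $\zeta > \beta$; factoring out $\sum_k \zeta^{-k} s_k = \delta_{\mathcal{A}}(\pi, \hat{\pi})$ then gives $\delta_{\infty}(\Psi^{\mathrm{aug}}(\pi), \Psi^{\mathrm{aug}}(\hat{\pi})) \le L_{\Psi}\, \delta_{\mathcal{A}}(\pi, \hat{\pi})$. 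Tracking the constants through this double sum, separating the amplified contribution (geometric ratio $(2L_PL_M+2)/\zeta$) from the direct contribution (ratio $1/\zeta$), produces the explicit value of $L_{\Psi}$ displayed in the statement.

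The main obstacle is the measure-argument term in the one-step estimate: unlike in a standard fully observed MFG, the kernel $p_y$ in \eqref{aug_transition} depends on the underlying measure flow $\bm{\mu}^{\nu}_t$ rather than on the augmented-space measure $\nu_t$ directly, so the estimate only closes after chaining the Lipschitz continuity of $p_y$ (\Cref{prop:lip_cont}) with the Lipschitz continuity of the reconstruction map $\nu_t \mapsto \bm{\mu}^{\nu}_t$ (\Cref{lem:nu_to_mu}); this is exactly the step with no counterpart in \cite{cui2021approximately, saldi_regularisation}. A secondary technical point is the careful bookkeeping of the geometric double sum needed to extract the precise constant $L_{\Psi}$ and to identify the sharp threshold on $\zeta$.
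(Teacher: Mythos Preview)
Your strategy is the same as the paper's: derive a one-step recursion for $a_t := \delta_{TV}(\Psi^{\mathrm{aug}}(\pi)_t, \Psi^{\mathrm{aug}}(\hat{\pi})_t)$ by splitting the update into a change-of-input-measure piece (TV contraction), a change-of-measure-argument piece (controlled via \Cref{prop:lip_cont} and \Cref{lem:nu_to_mu}), and a change-of-policy piece (controlled via \Cref{prop: maxmin_ineq}), and then sum against the weights $\zeta^{-t}$. You also correctly identify that the extra chaining through $\nu_t \mapsto \bm{\mu}^{\nu}_t$ is the step specific to the MCDM setting.

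The one genuine difference is in how the inhomogeneous term is handled. The paper does not keep the time-local quantity $s_t$ in the recursion; instead it bounds the policy contribution directly by the global $\delta_{\mathcal{A}}(\pi,\hat{\pi})$, obtaining $a_{t+1} \le 2L_P\,\delta_{\mathcal{A}}(\pi,\hat{\pi}) + (2L_PL_M+2)\,a_t$, solves this constant-inhomogeneity linear recurrence in closed form as $S_t = \frac{2L_P}{2L_PL_M+1}\big((2L_PL_M+2)^t - 1\big)$, and then evaluates $\sum_{t\ge 1}\zeta^{-t}S_t$. The two-term expression for $L_\Psi$ in the statement is precisely the sum of the two geometric series arising from $(2L_PL_M+2)^t$ and the constant $-1$ in $S_t$. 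Your convolution-sum with $s_t$ is perfectly valid (and in fact sharper), but it collapses to a single geometric factor $c/(\zeta-\beta)$ rather than the displayed two-term form; so your final sentence about ``separating the amplified contribution from the direct contribution'' to recover the stated $L_\Psi$ does not match your own route. Relatedly, your remark that ``$c$ is of order $L_P$'' is off: with the bound you describe (using $p_y(A\mid y,\cdot,\bm{\mu})\in[0,1]$ in \Cref{prop: maxmin_ineq}), the coefficient of $s_t$ is $1$, not $L_P$. None of this affects correctness of the Lipschitz claim itself, only the bookkeeping needed to land on the specific constant in the proposition.
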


\begin{proof}
    We show inductively that
\begin{align}\label{eq:psi_aug_induction}
    \delta_{TV}( \Psi^{\mathrm{aug}}(\pi)_{t}, \Psi^{\mathrm{aug}}(\hat{\pi})_{t}) \leq S_t\ \delta_{\mathcal{A}}(\pi, \hat{\pi}),\quad t\geq 0,
\end{align}
for constants $S_t \in \R^+$ with $S_{t+1} =  2L_P + 2S_t (L_P L_M +1)$, $S_0 = 0$. For $t=0$, we have from \Cref{defn:regularised MFNE-MCDM},
    \begin{align*}
        \delta_{TV}( \Psi^{\mathrm{aug}}(\pi)_0, \Psi^{\mathrm{aug}}(\hat{\pi})_0) = \delta_{TV}( \nu_0, \nu_0) = 0.
    \end{align*}
    Assume the hypothesis \eqref{eq:psi_aug_induction} holds true for some $t > 0$. By \Cref{lem:dominating_measure} one can write the total variation distance between $\Psi^{\mathrm{aug}}(\pi)_{t}$ and $\Psi^{\mathrm{aug}}(\hat{\pi})_{t} $ with respect to some dominating measure $q$. We write $\psi(\pi)_t$ and $\psi(\hat{\pi})_t$ for their respective densities. This leads to
    \begin{align*}
     \delta_{TV}\left( \Psi^{\mathrm{aug}}(\pi)_{t+1}, \Psi^{\mathrm{aug}}(\hat{\pi})_{t+1} \right)
        =\ & \frac{1}{2}\int_{\tilde{\mathcal{Y}}} \Bigg\lvert \int_{\tilde{\mathcal{Y}}} \int_U \bigg( p_y\Big(\pr{y} \mid y, u, \bm{\mu}_t^{\Psi^{\mathrm{aug}}(\pi)} \Big) \pi_t(\de u \mid y) \psi(\pi)_t( y) \\
        & \qquad  -  p_y\Big(\pr{y} \mid y, u, \bm{\mu}_t^{\Psi^{\mathrm{aug}}(\hat{\pi})} \Big) \hat{\pi}_t(\de u \mid y) \psi(\hat{\pi})_t(y) \bigg) \Bigg\rvert q(\de \pr{y})\\
        \leq\ & \frac{1}{2}( J_1 + J_2),
    \end{align*}
    where
    \begin{align*}
        J_1 & \coloneqq \int_{\tilde{\mathcal{Y}}}\bigg\vert \int_{\tilde{\mathcal{Y}}}\int_U \Big(p_y\Big(\pr{y} \mid y, u, \bm{\mu}_t^{\Psi^{\mathrm{aug}}(\pi)} \Big) \pi_t(\de u \mid y) \\
        & \quad \quad - p_y\Big( \pr{y} \mid y, u, \bm{\mu}_t^{\Psi^{\mathrm{aug}}(\hat{\pi})} \Big) \hat{\pi}_t(\de u \mid y) \Big)  \psi(\pi)_t(y)\ \bigg\vert q(\de \pr{y})  \\
        & \leq \int_{\tilde{\mathcal{Y}}\times \tilde{\mathcal{Y}}} \int_U \bigg\vert p_y\Big(\pr{y} \mid y, u, \bm{\mu}_t^{\Psi^{\mathrm{aug}}(\pi)} \Big) \pi_t(\de u \mid y) \\
        & \quad \quad - p_y\Big( \pr{y} \mid y, u, \bm{\mu}_t^{\Psi^{\mathrm{aug}}(\hat{\pi})} \Big) \hat{\pi}_t(\de u \mid y) \bigg\vert  \psi(\pi)_t(y) q(\de \pr{y}),
    \end{align*}
    and
    \begin{align*}
    J_2 &  \coloneqq  \int_{\tilde{\mathcal{Y}} \times \tilde{\mathcal{Y}}} \int_U p_y\left(\pr{y} \mid y, u, \bm{\mu}_t^{\Psi^{\mathrm{aug}}(\hat{\pi})} \right) \hat{\pi}_t(\de u \mid y) \left \lvert \psi_t(\pi)(y) - \psi(\hat{\pi})_t(y) \right\rvert q(\de\pr{y}) \\
        & \leq 2\ \delta_{TV}(\Psi^{\mathrm{aug}}(\pi)_t, \Psi^{\mathrm{aug}}(\hat{\pi})_t).
    \end{align*}
    The integral over $u \in U$ in $J_1$ can be simplified to
    \begin{align*}
        &\int_U \left\lvert p_y\Big(\pr{y} \mid y, u, \bm{\mu}_t^{\Psi^{\mathrm{aug}}(\pi)} \Big) \pi_t(\de u \mid y) - p_y\Big( \pr{y} \mid y, u, \bm{\mu}_t^{\Psi^{\mathrm{aug}}(\hat{\pi})} \Big) \hat{\pi}_t(\de u \mid y) \right\rvert \\
        \leq\ & \int_U p_y\Big(\pr{y} \mid y, u, \bm{\mu}_t^{\Psi^{\mathrm{aug}}(\pi)} \Big) \lvert\pi_t(\de u \mid y) - \hat{\pi}_t(\de u \mid y) \rvert\\
        & \quad + \int_U  \left\lvert p_y\Big(\pr{y} \mid y, u, \bm{\mu}_t^{\Psi^{\mathrm{aug}}(\pi)} \Big) - p_y\Big( \pr{y} \mid y, u, \bm{\mu}_t^{\Psi^{\mathrm{aug}}(\hat{\pi})} \Big) \right \rvert \hat{\pi}_t(\de u \mid y)\\
        \leq\ & \bigg( p_y\Big(\pr{y} \mid y, u_{\mathrm{max}}, \bm{\mu}_t^{\Psi^{\mathrm{aug}}(\pi)} \Big) - p_y\Big(\pr{y} \mid y, u_{\mathrm{min}}, \bm{\mu}_t^{\Psi^{\mathrm{aug}}(\pi)} \Big) \bigg) \delta_{TV}(\pi_t(\cdot \mid y), \hat{\pi}_t(\cdot \mid y))  \\
        & \quad + \sup_{u \in U} \left\lvert p_y\Big(\pr{y} \mid y, u, \bm{\mu}_t^{\Psi^{\mathrm{aug}}(\pi)} \Big) - p_y\Big( \pr{y} \mid y, u, \bm{\mu}_t^{\Psi^{\mathrm{aug}}(\hat{\pi})} \Big) \right \rvert,
    \end{align*}
    for some $u_{\mathrm{max}}, u_{\mathrm{min}} \in U$, so that
    \begin{align*}
        J_1 & \leq 2L_P\ \delta_{\mathcal{A}}( \pi, \hat{\pi}) + 2 \sup_{y \in \tilde{\mathcal{Y}}}  \sup_{u \in U} \delta_{TV}\left( p_y\Big(\cdot \mid y, u, \bm{\mu}_t^{\Psi^{\mathrm{aug}}(\pi)} \Big), p_y\Big( \cdot \mid y, u, \bm{\mu}_t^{\Psi^{\mathrm{aug}}(\hat{\pi})} \Big) \right)\\
        & \leq 2L_P\ \delta_{\mathcal{A}}( \pi, \hat{\pi})  + 2 L_P L_M\ \delta_{TV}(\Psi^{\mathrm{aug}}(\pi)_t, \Psi^{\mathrm{aug}}(\hat{\pi})_t ).
    \end{align*}
    Then, we can conclude the inductive step by applying the inductive hypothesis, noting
    \begin{align*}
        \delta_{TV}( \Psi^{\mathrm{aug}}(\pi)_{t+1}, \Psi^{\mathrm{aug}}(\hat{\pi})_{t+1} )
        \leq \ &  2L_P\ \delta_{\mathcal{A}}( \pi, \hat{\pi} ) + (2 L_P L_M +2)\ \delta_{TV}(\Psi^{\mathrm{aug}}(\pi)_t, \Psi^{\mathrm{aug}}(\hat{\pi})_t ) \\
        \leq \ & \big(2L_P + 2S_t (L_P L_M +1)\big) \delta_{\mathcal{A}}( \pi, \hat{\pi}).
    \end{align*}
    Next, we observe that $S_t$ satisfies a first-order linear recurrence relation, and more generally admits the explicit formula
    \begin{align*}
        S_t = \frac{2L_P}{2L_P L_M +1}(2L_P L_M +2)^t - \frac{2L_P}{2L_P L_M +1}.
    \end{align*}
    Therefore, fixing some $\zeta \in \N$ such that $\zeta > 2L_P L_M +2$, we obtain the upper bound
    \begin{align*}
        \delta_{\infty}( \Psi^{\mathrm{aug}}(\pi), \Psi^{\mathrm{aug}}(\hat{\pi})) &= \sum^{\infty}_{t=0} \zeta^{-t} \delta_{TV}( \Psi^{\mathrm{aug}}(\pi)_{t}, \Psi^{\mathrm{aug}}(\hat{\pi})_{t})\\
        & \leq \sum^{\infty}_{t=1} \frac{S_t}{\zeta^t}\ \delta_{\mathcal{A}}(\pi, \hat{\pi})\\
        &= \frac{2L_P}{2L_P L_M +1} \left( \frac{\zeta}{\zeta - 2L_P L_M -2} + \frac{1}{\zeta-1} \right) \delta_{\mathcal{A}}(\pi, \hat{\pi}).
    \end{align*}
\end{proof}

The next statement, showing the regularised fixed point operator of the MFG-MCDM to be a contraction map, now is essentially a corollary of the previous propositions.

\begin{thm}\label{thm:contraction_reg_mcdm}
    Let $\zeta$ and $\eta^{*}$ be constants such that $\zeta > 2L_P L_M +2$, and $\eta^{*} >  \frac{2M_R}{-(1-\gamma)\log \gamma}$, and recall the constants
    \begin{align*}
            q^{*} = \frac{M_R}{1-\gamma},\quad l_{\eta^{*}} = \frac{L_M(L_R + 2 \gamma q^{*}L_P)}{1- \gamma \exp\left(\frac{2q^{*}}{\eta^{*}}\right)}, \quad L_{\Psi} = \frac{2L_P}{2L_P L_M +1} \left( \frac{\zeta}{\zeta - 2L_P L_M -2} + \frac{1}{\zeta-1} \right).
    \end{align*}
    Then, for any $\eta> \sqrt{2q^{*} l_{\eta^{*}} L_{\Psi}}$, the fixed point operator $\Psi^{\mathrm{aug}} \circ \Phi_{\eta}$ is a contraction mapping on the space $(\Pcal(\tilde{\mathcal{Y}})^{\infty}, \delta_{\infty})$, where the constant $\zeta$ in the metric $\delta_{\infty}$ in \eqref{eq_flow_metric} is as chosen above. In particular, there exists a unique fixed point for $\Psi^{\mathrm{aug}} \circ \Phi_{\eta}$, which is a regularised MFNE for the MFG-MCDM problem.
\end{thm}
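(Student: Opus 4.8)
The plan is to read this theorem as a direct corollary of the two Lipschitz estimates already established for the constituent maps, combined with Banach's fixed point theorem. First I would fix once and for all an integer $\zeta > 2L_P L_M + 2$ together with a constant $\eta^* > \frac{2M_R}{-(1-\gamma)\log\gamma}$. The former choice is precisely what \Cref{prop:lip_measure_flow} demands, and since it forces $\zeta > 1$ it is simultaneously compatible with the metrics $\delta_\infty$ and $\delta_{\mathcal{A}}$ and with the bound in \Cref{cor:lip_best_response}; the latter is the regime in which the $Q$-function Lipschitz constant $l_{\eta^*}$ of \Cref{lem:qregstar_lip} is finite. With $\zeta$ and $\eta^*$ frozen, $l_{\eta^*}$, $L_\Psi$ and $q^*$ become genuine constants depending only on the problem data and on $\zeta$.

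The core of the argument is a one-line composition estimate. Note first that, unlike the unregularised argmax correspondence $\Phi^{\mathrm{aug}}$, the regularised best response $\Phi_\eta$ is single-valued, returning the unique softmax policy of \Cref{thm_dpp_regularise}, so that $\Psi^{\mathrm{aug}} \circ \Phi_\eta$ is a genuine self-map of $\Pcal(\tilde{\mathcal{Y}})^\infty$. For any $\bm{\nu}, \hat{\bm{\nu}}$ and any $\eta \geq \eta^*$, \Cref{cor:lip_best_response} gives $\delta_{\mathcal{A}}(\Phi_\eta(\bm{\nu}), \Phi_\eta(\hat{\bm{\nu}})) \leq K_\eta\, \delta_\infty(\bm{\nu}, \hat{\bm{\nu}})$ with $K_\eta = 2q^* l_{\eta^*}/\eta^2$, while \Cref{prop:lip_measure_flow} gives $\delta_\infty(\Psi^{\mathrm{aug}}(\pi), \Psi^{\mathrm{aug}}(\hat{\pi})) \leq L_\Psi\, \delta_{\mathcal{A}}(\pi, \hat{\pi})$. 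Chaining the two yields $\delta_\infty\big((\Psi^{\mathrm{aug}} \circ \Phi_\eta)(\bm{\nu}), (\Psi^{\mathrm{aug}} \circ \Phi_\eta)(\hat{\bm{\nu}})\big) \leq L_\Psi K_\eta\, \delta_\infty(\bm{\nu}, \hat{\bm{\nu}})$, so the self-map has Lipschitz constant $L_\Psi K_\eta = 2 q^* l_{\eta^*} L_\Psi / \eta^2$. Requiring this to be strictly below $1$ is exactly the stated condition $\eta > \sqrt{2 q^* l_{\eta^*} L_\Psi}$; since $K_\eta$ is decreasing in $\eta$, any $\eta$ exceeding both $\eta^*$ and this threshold is admissible.

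To conclude I would invoke Banach's fixed point theorem, for which I must verify that $(\Pcal(\tilde{\mathcal{Y}})^\infty, \delta_\infty)$ is a complete metric space (the informal phrase ``Banach space'' is a slight abuse, as measure flows do not form a vector space). This follows since $(\Pcal(\tilde{\mathcal{Y}}), \delta_{TV})$ is complete by the notation section, and $\delta_\infty$ is the $\zeta$-weighted aggregation of the coordinatewise $\delta_{TV}$ distances, under which a Cauchy sequence of flows is coordinatewise Cauchy and hence convergent. Banach's theorem then produces a unique $\bm{\nu}^* = (\Psi^{\mathrm{aug}} \circ \Phi_\eta)(\bm{\nu}^*)$. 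Setting $\pi^* := \Phi_\eta(\bm{\nu}^*)$ gives both $\pi^* = \Phi_\eta(\bm{\nu}^*)$ and $\bm{\nu}^* = \Psi^{\mathrm{aug}}(\pi^*)$, which is precisely the fixed-point characterisation of a regularised MFNE in \Cref{defn:regularised MFNE-MCDM}; since $\pi^*$ is determined by $\bm{\nu}^*$, uniqueness of the flow transfers to uniqueness of the equilibrium pair.

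I expect no serious analytic obstacle here, as the heavy lifting was carried out in the preceding propositions; the theorem is essentially a bookkeeping corollary. The only genuine points of care are ensuring a single $\zeta$ meets the hypotheses of all three Lipschitz lemmas at once, confirming that the admissible range $\eta \geq \eta^*$ is compatible with the contraction threshold, and checking completeness of the flow space rather than taking the ``Banach space'' wording literally.
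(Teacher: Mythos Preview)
Your proposal is correct and follows essentially the same approach as the paper: chain the Lipschitz bounds from \Cref{cor:lip_best_response} and \Cref{prop:lip_measure_flow} to obtain the contraction constant $L_\Psi K_\eta = 2q^* l_{\eta^*} L_\Psi / \eta^2$, then invoke Banach's fixed point theorem on the complete metric space $(\Pcal(\tilde{\mathcal{Y}})^\infty, \delta_\infty)$. You are in fact slightly more careful than the paper in explicitly noting the single-valuedness of $\Phi_\eta$, the compatibility of the choice of $\zeta$ across all three lemmas, and the completeness argument for the flow space.
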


\begin{proof}
Let $\bm{\nu}, \hat{\bm{\nu}} \in \Pcal(\tilde{\mathcal{Y}})^{\infty}$, and let $\pi = \Phi_{\eta}(\bm{\nu})$ and $\hat{\pi}= \Phi^{\mathrm{reg}}_{\eta}(\hat{\bm{\nu}})$. Then by \Cref{cor:lip_best_response} and \Cref{prop:lip_measure_flow},
\begin{align*}
    \delta_{\infty}(\Psi^{\mathrm{aug}}(\pi), \Psi^{\mathrm{aug}}(\hat{\pi})) & \leq L_{\Psi} \delta_{\mathcal{A}}(\Phi_{\eta}(\bm{\nu}), \Phi_{\eta}(\hat{\bm{\nu}})) \\
    & \leq L_{\Psi}  K_{\eta} \delta_{\infty}(\bm{\nu}, \hat{\bm{\nu}}),
\end{align*}
where we recall from \Cref{cor:lip_best_response} that $K_{\eta} = \frac{2q^{*} l_{\eta^{*}}}{\eta^2}$. Since $\eta > \sqrt{2q^{*} l_{\eta^{*}} L_{\Psi
}}$, we have that $ L_{\Psi}  K_{\eta} < 1$, and hence the map $\Psi^{\mathrm{aug}} \circ \Phi_{\eta}$ is a contraction. As $(\Pcal(\tilde{\mathcal{Y}})^{\infty}, \delta_{\infty})$ is a complete metric space, by Banach's fixed-point theorem, there exists a unique fixed point of $\Psi^{\mathrm{aug}} \circ \Phi_{\eta}$, which by definition is also an MFNE for the regularised MFG-MCDM.  
\end{proof}

\begin{rem}

Mean field games with (relative entropy) regularisation are relevant both as approximations to non-regularised MFGs (see \cite{cui2021approximately}) and also as MFGs of research interest in their own right (e.g., \cite{saldi_regularisation}).
Introducing regularisation clearly alters the MFNE: following \cite{cui2021approximately}, we show subsequently in \Cref{thm:epsilon_nash} that the regularisation parameter $\eta$ is required to be sufficiently small to approximate the Nash equilibrium for a non-regularised game with finitely many, but large number of players. In contrast, \Cref{thm:contraction_reg_mcdm} requires $\eta$ to be sufficiently large to guarantee the uniqueness of a regularised MFNE. Despite a gap left between such assumptions in theory, it was possible in our computational experiments to find $\eta>0$ small such as to observe both convergence of iterates as well as a good approximation to the MFNE of the non-regularised MFG.
\end{rem}

\begin{rem}
Although MFNE exist for any $\eta>0$, as shown in \Cref{thm:existence}, they do not have to be unique for weakly (or even un-)regularized MFGs. In such circumstances, a Banach fixed point contraction is clearly impossible. Thus, to calculate non-unique and (all) multiple MFNE becomes a question of interest, which appears to be less studied in the literature; but note recent progress by \cite{guo2023general}.
\end{rem}

\section{Approximate Nash equilibria to the $N$-player game}\label{sec:approximate}

In this section, we show that the regularised MFNE as defined in \Cref{defn:regularised MFNE-MCDM} forms an approximate Nash equilibrium of the corresponding finite player game for a sufficiently large number of players and a sufficiently low amount of regularisation.\footnote{One cannot infer from this the computability of the equilibria. Indeed, for small $\eta$, there is no guarantee of a contractive fixed point operator, and the MFNE need not be unique.} Recall from \eqref{regularised_Q_func} that, for $\eta >0$ and $\bm{\nu} \in \Pcal(\tilde{\mathcal{Y}})^{\infty}$, $Q^{*}_{\eta, \bm{\nu}}(t,y,u)$ represents the optimal value conditioned on $y_t = y$ and $u_t =u$. For the unregularised case, i.e. for $\eta =0$, we write $Q^{*}_{\bm{\nu}} \coloneqq Q^{*}_{0, \bm{\nu}}$. For a policy $\pi\in \mathcal{A}_{\rm DM}$, we will also consider its associated $Q$-function, defined by
\begin{align*}
        Q^{\pi}_{\bm{\nu}}(s,y,u) \coloneqq  \E^{\pi}_{s,y,u}\left[ \sum^{\infty}_{t=s} \gamma^{t-s} r_y\big(y_t, u_t,  \bm{\mu}^{\nu}_t\big) \right], 
\end{align*}
where the expectation $\E^{\pi}_{s,y,u}$ is under which $\pi$ is the policy, with the initial conditions $y_s = y$ and $u_s = u$ at time $s$. For $s = 0$, we write $Q^{\pi}_{\bm{\nu}}(y,u) \coloneqq Q^{\pi}_{\bm{\nu}}(0,y,u)$. We first show that the $Q$-function associated with the softmax policy converges uniformly towards $Q^{*}_{\bm{\nu}}$ as the regularisation parameter $\eta$ decreases towards 0. In order to do so, we will use the lemma below, which shows that the following integral functional of a continuous function $f$ with respect to a measure $q \in \Pcal(U)$ with full support, converges to the supremum of $f$ over $U$ as $\eta$ tends to 0. This is due to Laplace's method \cite[p. 56]{wong2001asymptotic}, and can be seen as a special case of Varadhan's lemma \cite[Theorem 4.3.1]{dembo2009large}.

\begin{lemma}\label{varadhan_lemma}
Let $U$ be a compact Polish space, let $f:U \to \R$ be a continuous function and let $q \in \Pcal(U)$ have full support on $U$. Then
    \begin{align*}
        \lim_{\eta \to 0^{+}} \eta \log \left( \int_U \exp\left(\frac{f(u)}{\eta}\right) q(\de u) \right) = \sup_{u \in U} f(u) = \max_{u \in U} f(u).
    \end{align*}
\end{lemma}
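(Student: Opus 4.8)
The plan is to prove the two equalities separately and in the natural order. The identity $\sup_{u\in U} f(u) = \max_{u \in U} f(u)$ is immediate: since $U$ is compact and $f$ is continuous, $f$ attains its supremum, which we denote by $M \coloneqq \max_{u\in U} f(u)$. It then remains to show that $I(\eta) \coloneqq \eta \log\left(\int_U \exp(f(u)/\eta)\, q(\de u)\right)$ converges to $M$ as $\eta \to 0^+$, which I would do by establishing a matching $\limsup$ upper bound and $\liminf$ lower bound.

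For the upper bound, I would use the pointwise estimate $f(u) \leq M$ together with the fact that $q$ is a probability measure. This gives
\begin{align*}
    \int_U \exp\!\left(\frac{f(u)}{\eta}\right) q(\de u) \leq \exp\!\left(\frac{M}{\eta}\right) \int_U q(\de u) = \exp\!\left(\frac{M}{\eta}\right),
\end{align*}
and since $\eta \log(\cdot)$ is increasing for $\eta > 0$ we obtain $I(\eta) \leq M$ for every $\eta > 0$, hence $\limsup_{\eta\to 0^+} I(\eta) \leq M$.

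For the lower bound, which is where the full-support hypothesis on $q$ is essential, I would fix $\varepsilon > 0$ and exploit the continuity of $f$ near a maximiser. Choosing $u^* \in U$ with $f(u^*) = M$, continuity provides an open neighbourhood $O \ni u^*$ on which $f(u) > M - \varepsilon$. Because $q$ has full support, $q(O) > 0$, and restricting the integral to $O$ yields
\begin{align*}
    \int_U \exp\!\left(\frac{f(u)}{\eta}\right) q(\de u) \geq \int_O \exp\!\left(\frac{f(u)}{\eta}\right) q(\de u) \geq \exp\!\left(\frac{M-\varepsilon}{\eta}\right) q(O),
\end{align*}
so that $I(\eta) \geq (M-\varepsilon) + \eta \log q(O)$. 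As $\eta \to 0^+$ the term $\eta \log q(O)$ vanishes since $q(O)$ is a fixed positive constant, giving $\liminf_{\eta\to 0^+} I(\eta) \geq M - \varepsilon$. Letting $\varepsilon \downarrow 0$ yields $\liminf_{\eta\to 0^+} I(\eta) \geq M$, and combining with the upper bound completes the proof.

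The argument is elementary and the only genuine subtlety is the lower bound: without the full-support assumption the neighbourhood $O$ of the maximiser could carry zero $q$-mass, in which case the integral would fail to "see" the maximum and the limit could drop below $M$. Thus the main conceptual point to emphasise is precisely that full support guarantees $q(O) > 0$, which is what makes the concentration of mass near the maximiser detectable in the $\eta \to 0^+$ limit.
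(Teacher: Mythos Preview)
Your proof is correct and complete. The paper does not actually prove this lemma; it simply cites Laplace's method and Varadhan's lemma as references, so your elementary $\limsup$/$\liminf$ sandwich argument---which is the standard proof behind those citations---is in fact more self-contained than what the paper provides.
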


Recall from \Cref{defn:regularised MFNE-MCDM} that $\Phi_{\eta}(\bm{\nu})$ is the softmax policy with respect to $Q^{*}_{\eta,\bm{\nu}}$. Next we show that the function $\bm{\nu} \mapsto Q^{\Phi_{\eta}(\bm{\nu})}_{\bm{\nu}}$ converges uniformly towards $\bm{\nu} \mapsto Q^{*}_{\bm{\nu}}$. The main idea follows the proof of \cite[Theorem 4]{cui2021approximately}, which is in the setting for the case $\mathcal{X}$ and $A$ being finite. We prove that the result also holds here where $\mathcal{X}$ and $A$ are compact Polish spaces, by showing that the measure induced by the softmax policy concentrates around optimal points as $\eta$ decreases towards 0.

\begin{prop}\label{lemma_uniform_convergence}
       The function $\bm{\nu} \mapsto Q^{\Phi_{\eta}(\bm{\nu})}_{\bm{\nu}}$ converges to $\bm{\nu} \mapsto Q^{*}_{\bm{\nu}}$ as $\eta \downarrow 0$, uniformly over all $y \in \tilde{\mathcal{Y}}$ and $u \in U$.
    \end{prop}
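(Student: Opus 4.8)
The plan is to show that the softmax policy $\pi^{\rm soft} = \Phi_{\eta}(\bm{\nu})$ becomes asymptotically greedy for the \emph{unregularised} problem as $\eta\downarrow0$, and then to propagate this near-greediness through the Bellman recursion to control the value gap. Writing $D(t) := \sup_{y\in\tilde{\mathcal{Y}},\,u\in U}\big(Q^{*}_{\bm{\nu}}(t,y,u) - Q^{\pi^{\rm soft}}_{\bm{\nu}}(t,y,u)\big)\ge 0$ and subtracting the Bellman identity for $Q^{*}_{\bm{\nu}}$ (with the hard maximum) from the policy-evaluation identity for $Q^{\pi^{\rm soft}}_{\bm{\nu}}$ (with the average against $\pi^{\rm soft}$), inserting and subtracting $\int_U Q^{*}_{\bm{\nu}}(t+1,\cdot,u)\,\pi^{\rm soft}_{t+1}(\de u\mid\cdot)$ yields $D(t)\le\gamma\,(\bar\varepsilon_{\eta}+D(t+1))$, where
\[
    \bar\varepsilon_{\eta} := \sup_{t\ge 0}\ \sup_{y\in\tilde{\mathcal{Y}}}\ \Big(\sup_{u\in U}Q^{*}_{\bm{\nu}}(t,y,u) - \int_U Q^{*}_{\bm{\nu}}(t,y,u)\,\pi^{\rm soft}_t(\de u\mid y)\Big)
\]
is the (uniform) greedy gap. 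Since $|Q^{*}_{\bm{\nu}}|,|Q^{\pi}_{\bm{\nu}}|\le q^{*}$ by \Cref{lemma:q_bound}, we have $\gamma^{n}D(t+n)\to0$, so iterating gives $D(t)\le \tfrac{\gamma}{1-\gamma}\bar\varepsilon_{\eta}$ for every $t$; specialising to $t=0$ bounds $\sup_{y,u}|Q^{\Phi_{\eta}(\bm{\nu})}_{\bm{\nu}}-Q^{*}_{\bm{\nu}}|(0,y,u)$, with all constants independent of $\bm{\nu}$. It thus remains to prove $\bar\varepsilon_{\eta}\to0$ as $\eta\downarrow0$.

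To estimate $\bar\varepsilon_{\eta}$, fix $(t,y)$, abbreviate $Q^{\eta}:=Q^{*}_{\eta,\bm{\nu}}(t,y,\cdot)$ and $Q:=Q^{*}_{\bm{\nu}}(t,y,\cdot)$, and recall $\pi^{\rm soft}_t(\de u\mid y)\propto\exp(Q^{\eta}(u)/\eta)\,q(\de u)$. Replacing $Q$ by $Q^{\eta}$ in both the supremum and the integral costs $2\Delta_{\eta}$, where $\Delta_{\eta}:=\sup_{t,y,u}|Q^{*}_{\eta,\bm{\nu}}-Q^{*}_{\bm{\nu}}|$, and then splitting the resulting gap over $\{u:\sup Q^{\eta}-Q^{\eta}(u)\le\delta\}$ and its complement and using $|Q^{\eta}|\le q^{*}$ gives
\[
    \sup_{u}Q(u)-\int_U Q\,\pi^{\rm soft}_t(\de u\mid y)\ \le\ \delta\ +\ 2q^{*}\,\frac{\exp(-\delta/(2\eta))}{q\big(\{u:\sup_{u'}Q^{\eta}(u')-Q^{\eta}(u)\le\delta/2\}\big)}\ +\ 2\Delta_{\eta}.
\]
Everything therefore reduces to two uniform ingredients: (i) $\Delta_{\eta}\to0$, and (ii) a uniform lower bound $\inf_{t,y}\,q\big(\{u:\sup_{u'}Q^{*}_{\bm{\nu}}(t,y,u')-Q^{*}_{\bm{\nu}}(t,y,u)\le\delta\}\big)\ge c_{\delta}>0$ on the $q$-mass of near-optimal actions (the near-optimal set for $Q^{\eta}$ then contains that for $Q$ once $\Delta_{\eta}$ is small).

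Both ingredients follow from a single fact: the families $\{Q^{*}_{\eta,\bm{\nu}}(t,\cdot,\cdot)\}_{t,\eta}$ and $\{Q^{*}_{\bm{\nu}}(t,\cdot,\cdot)\}_{t}$ admit a \emph{common} modulus of continuity on the compact $\tilde{\mathcal{Y}}\times U$, uniform in $t$, $\eta$ and $\bm{\nu}$. Indeed $Q^{*}_{\eta,\bm{\nu}}$ is the fixed point of the Bellman operator of \Cref{thm_dpp_regularise}, whose free-energy step $g\mapsto\eta\log\int_U\exp(g(\cdot,u)/\eta)\,q(\de u)$ is $1$-Lipschitz in the sup norm and does not enlarge the modulus in the $\tilde{\mathcal{Y}}$-argument, uniformly in $\eta$; together with the continuity of $r_y$, the Lipschitz dependence of $p_y$ on the state coordinate (\Cref{prop:lip_cont}), and the $\gamma$-contraction, the fixed points inherit a modulus independent of $t,\eta,\bm{\nu}$, which passes to $Q^{*}_{\bm{\nu}}$ in the $1$-Lipschitz limit $\eta\log\int\exp(\cdot/\eta)q\to\sup$. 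Given such a modulus $\omega$, each near-optimal set contains a ball of fixed radius $\omega^{-1}(\delta)$ about a maximiser, and since $U$ is compact with $q$ of full support one has $\inf_{u_0\in U}q(B_{\rho}(u_0))>0$ for every fixed $\rho$ (a covering argument), giving (ii). Ingredient (i) then comes from the same Laplace estimate: the contraction $(1-\gamma)\Delta_{\eta}\le\gamma\sup_{t,y}\big(\sup_{u}Q^{*}_{\bm{\nu}}-\eta\log\int_U\exp(Q^{*}_{\bm{\nu}}/\eta)q\big)$ combined with $\sup-\eta\log\int\exp(\cdot/\eta)q\le\delta-\eta\log c_{\delta}$ sends $\Delta_{\eta}\to0$. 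Feeding (i)--(ii) into the displayed bound shows $\bar\varepsilon_{\eta}\le\delta+o_{\eta}(1)$ for every $\delta>0$, so $\bar\varepsilon_{\eta}\to0$ and the claim follows; \Cref{varadhan_lemma} is the pointwise prototype of these estimates.

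The main obstacle is exactly the upgrade from the pointwise Laplace asymptotics of \Cref{varadhan_lemma} to bounds uniform over $t$, $y$ and $\bm{\nu}$: the softmax concentrates on optimal actions at a rate governed by $q(\{Q^{\eta}\text{ near }\sup Q^{\eta}\})$, which could a priori degenerate as $t\to\infty$ or as $y$ varies, and the regularised value $Q^{*}_{\eta,\bm{\nu}}$ enters the policy through the scale $1/\eta$, so closeness of $Q^{*}_{\eta,\bm{\nu}}$ to $Q^{*}_{\bm{\nu}}$ in sup norm does not transfer to the softmax weights for free. Establishing the $t$- and $\eta$-independent modulus of continuity of the optimal and regularised-optimal $Q$-functions—so that these near-optimal sets contain balls of radius bounded away from $0$ and hence carry uniformly positive $q$-mass—is the crux, and is where the compactness of $\tilde{\mathcal{Y}}\times U$, the full support of $q$, and the non-expansiveness of the soft-max operator are all used.
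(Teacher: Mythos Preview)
Your approach is genuinely different from the paper's and, if it worked, would be more direct. The paper proceeds through five steps: pointwise convergence of the regularised optimal $Q$-function on finite horizons, passage to the infinite horizon, Dini's theorem to upgrade to uniform convergence in $\bm{\nu}$, a softmax-concentration estimate (similar in spirit to yours) for the policy-evaluation $Q$-function pointwise, and finally an equicontinuity argument in $\bm{\nu}$ plus Arzel\`a--Ascoli. Your route instead derives the one-step recursion $D(t)\le\gamma(\bar\varepsilon_\eta+D(t+1))$ and reduces everything to a single uniform Laplace estimate for $\bar\varepsilon_\eta$; this recursion and the displayed bound for the greedy gap are correct.

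The gap is in the sentence invoking ``the Lipschitz dependence of $p_y$ on the state coordinate (\Cref{prop:lip_cont})'' to obtain a common modulus of continuity for $\{Q^{*}_{\eta,\bm{\nu}}(t,\cdot,\cdot)\}_{t,\eta,\bm{\nu}}$. First, \Cref{prop:lip_cont} only establishes Lipschitz dependence of $p^{(t)}$ and $r_y$ in the \emph{measure} argument, not in $(y,u)$. Second, and more seriously, the augmented kernel $p_y$ in \eqref{aug_transition} is \emph{not} Lipschitz in total variation in $(y,u)$: it contains the Dirac factor $\delta_{(a_{-d_i+1},\ldots,a_{-1},\tilde a)}(\de\hat{\bm a})$, so perturbing any action coordinate of $y$ or $u$ moves this Dirac and forces $\delta_{TV}=1$. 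Hence the usual argument ``$\int J\,p_y$ is Lipschitz with constant $2\|J\|_\infty L_{p_y}$, independent of the modulus of $J$'' breaks down here, and with it the claim that the $\gamma$-contraction alone controls the modulus uniformly in $t$. Because actions enter $\hat y$ deterministically, the Lipschitz constant of $Q^{*}(t,\cdot,\cdot)$ in an action slot feeds back into the Lipschitz constant of $J^{*}(t+1,\cdot)$ in a (shifted) action slot of $\hat y$, and one must track this recursion explicitly. A correct version would exploit that an action ages through at most $d_0$ positions in the augmented state before being absorbed into $p^{(\cdot)}$ (which \emph{is} TV-Lipschitz by \Cref{assumption_lipschitz_p_r}); this yields a finite-depth recursion and hence a uniform bound, but it is a non-trivial structural argument that you have not supplied. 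Without it, neither ingredient (ii) (the uniform lower bound $c_\delta>0$) nor, consequently, ingredient (i) is established, and $\bar\varepsilon_\eta\to0$ remains unproved.
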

\begin{proof}
For $T>0$, we denote by $\QTstar_{\eta, \bm{\nu}}$ and $Q^{T, \pi}_{\eta, \bm{\nu}}$ the finite horizon counterparts of $Q^{*}_{\eta, \bm{\nu}}$ and $ Q^{\pi}_{\bm{\nu}}$ respectively, where the rewards after time $T$ are simply zero. We split the proof into the 5 steps below.
\begin{enumerate}
    \item For any $\bm{\nu} \in \Pcal(\tilde{\mathcal{Y}})^{\infty}$, $y \in \tilde{\mathcal{Y}}$, $u \in U$ and $t, T \in \N$ with $t \leq T$, we have $\lim_{\eta \to 0^{+}}Q^{T,*}_{\eta, \bm{\nu}}(t, y,u)=Q^{T,*}_{\bm{\nu}}(t,y,u)$. \vspace{0.5em}\\
    \textit{Proof of claim 1.} We prove the claim by induction. Fix $\bm{\nu} \in \Pcal(\tilde{\mathcal{Y}})^{\infty}$, $y \in \tilde{\mathcal{Y}}$, $u \in U$ and $t, T \in \N$ with $t \leq T$. The claim trivially holds for $t = T$. Now assume that for some $t \leq T -1$, for any $\varepsilon > 0$, there exists $\eta_0 > 0 $ such that for all $ \eta < \eta_0$,
    \begin{align*}
       \lvert Q^{T,*}_{\eta, \bm{\nu}}(t+1, y,u) - Q^{T,*}_{\bm{\nu}}(t+1,y,u)\rvert < \frac{\varepsilon}{2}.
    \end{align*}
    By \Cref{varadhan_lemma}, we can also find $\eta_1 > 0 $ such that for all $ \eta < \eta_1$,
    \begin{align*}
        \left\lvert \eta  \log \left( \int_U \exp \left(\frac{\QTstar_{\bm{\nu}}(t+1, y, \pr{u})}{\eta} \right) q(\de \pr{u})\right)  - \sup_{u\in U} \QTstar_{\bm{\nu}}(t+1, y, u) \right\rvert < \frac{\varepsilon}{2}.
    \end{align*}
    Therefore, from the dynamic programming characterisation in \Cref{prop:dpp_truncated_q}, we have for all $ \eta < \min \{\eta_0, \eta_1\}$,
    \begin{align*}
        Q^{T,*}_{\eta, \bm{\nu}}(t, y,u)
        &\leq   r_y( y, u, \bm{\mu}^{\nu}_t) + \gamma \eta \int_{\tilde{\mathcal{Y}}} \sup_{u\in U} \QTstar_{\bm{\nu}}(t+1, \pr{y}, u)  p_y(\de \pr{y} \mid y, u,  \bm{\mu}^{\nu}_t) + \varepsilon \\
        & = \QTstar_{\bm{\nu}}(t,y,u)+ \varepsilon,  \\
        Q^{T,*}_{\eta, \bm{\nu}}(t, y,u)
        &\geq  r_y( y, u, \bm{\mu}^{\nu}_t) + \gamma \eta \int_{\tilde{\mathcal{Y}}}\sup_{u\in U} \QTstar_{\bm{\nu}}(t+1, \pr{y}, u) p_y(\de \pr{y} \mid y, u,  \bm{\mu}^{\nu}_t) - \varepsilon\\
        & = \QTstar_{\bm{\nu}}(t,y,u)- \varepsilon,
    \end{align*}
    which proves claim 1.
    \item For any $\bm{\nu} \in \Pcal(\tilde{\mathcal{Y}})^{\infty}$, $y \in \tilde{\mathcal{Y}}$ and $u \in U$, we have $\lim_{\eta \to 0^{+}}Q^{*}_{\eta, \bm{\nu}}(y,u)=Q^{*}_{\bm{\nu}}(y,u)$. \vspace{0.5em}\\
    \textit{Proof of claim 2.} Fix $\bm{\nu} \in \Pcal(\tilde{\mathcal{Y}})^{\infty}$, $y \in \tilde{\mathcal{Y}}$ and $u \in U$. Let $\varepsilon > 0$ and choose $T \in \N$ such that 
    $\lvert Q^{*}_{\bm{\nu}}(y,u) - \QTstar_{\bm{\nu}}(y,u) \rvert < \varepsilon/3$ and $  \gamma^T M_R/(1-\gamma) < \varepsilon/6$. We then have 
    \begin{align*}
        \lvert Q^{*}_{\eta, \bm{\nu}} (y,u) -  \QTstar_{\eta, \bm{\nu}}(y,u) \rvert & \leq \gamma \int_{\tilde{\mathcal{Y}}} \lvert J^{*}_{\eta, \bm{\nu}}(T+1, \pr{y}) - \JTstar_{\eta, \bm{\nu}}(T+1, \pr{y}) \rvert\ p_y(\de \pr{y} \mid y, u, \bm{\mu}^{\nu}_t) \\
        & \leq \gamma \sup_{\bar{y} \in \mathcal{\tilde{Y}}}  \lvert J^{*}_{\eta, \bm{\nu}}(T+1, \bar{y}) - \JTstar_{\eta, \bm{\nu}}(T+1, \bar{y}) \rvert\\
        &\leq \gamma \sup_{\bar{y} \in \mathcal{\tilde{Y}}} \left\lvert\sup_{\pi \in \mathcal{A}_{\rm DM}}\E^{\pi}_{T+1,\bar{y}}\left[ \sum^{\infty}_{t = T+1} \gamma^{t} (r_y(y_t, u_t,  \bm{\mu}^{\nu}_t) - \eta D_{KL}(\pi_t \Vert q) )\right]  \right\rvert .
    \end{align*}
    By \Cref{thm_dpp_regularise}, the softmax policy $\pi^{\rm soft}$ attains the supremum over $\mathcal{A}_{\rm DM}$ above, and has a continuous density with respect to $q$. In addition, since $U$ is compact, there exists $M_D > 0$ such that for all $s \geq T+1$, $\lvert D_{KL}(\pi^{\rm soft}_s \Vert q)\rvert < M_D$, so that
    \begin{align*}
          \lvert Q^{*}_{\eta, \bm{\nu}} (y,u) -  \QTstar_{\eta, \bm{\nu}}(y,u) \rvert  \leq \sum^{\infty}_{t = T} \gamma^t (M_R - \eta M_D) < \frac{\varepsilon}{6} - \frac{\eta \gamma^T M_D}{1-\gamma}.
    \end{align*}
    From step 1, we can find $\eta_2 > 0 $ such that $\lvert \QTstar_{\eta, \bm{\nu}}(y,u) - \QTstar_{\bm{\nu}}(y,u) \rvert < \varepsilon/3$ and $\eta \gamma^T M_D/(1-\gamma) < \varepsilon/6$ for all $\eta < \eta_2$. Therefore, for all $\eta < \eta_2$,
    \begin{align*}
        &\lvert Q^{*}_{\eta, \bm{\nu}} (y,u) - Q^{*}_{\bm{\nu}}(y,u) \rvert \nonumber \\
        \leq\ & \lvert Q^{*}_{\eta, \bm{\nu}} (y,u) - \QTstar_{\eta, \bm{\nu}} (y,u)\rvert + \lvert \QTstar_{\eta, \bm{\nu}}(y,u) - \QTstar_{\bm{\nu}}(y,u) \rvert + \lvert \QTstar_{\bm{\nu}}(y,u) - Q^{*}_{\bm{\nu}}(y,u) \rvert < \varepsilon
    \end{align*}
    as required.
    
    \item For any $y \in \tilde{\mathcal{Y}}$ and $u \in U$, the function $\bm{\nu} \mapsto Q^{*}_{\eta, \bm{\nu}}(y,u)$ converges uniformly on $\bm{\nu} \in \Pcal(\tilde{\mathcal{Y}})^{\infty}$ to ${\bm{\nu} \mapsto Q^{*}_{\bm{\nu}}(y,u)}$ as $\eta \downarrow 0$. \vspace{0.5em}\\
    \textit{Proof of claim 3.} By \Cref{lem:qregstar_lip}, the map $\bm{\nu} \mapsto Q^{*}_{\eta, \bm{\nu}}$ is continuous with respect to the total variation metric, and hence is also weakly continuous. Moreover, the sequence of functions $\bm{\nu} \mapsto Q^{*}_{\eta, \bm{\nu}}(y,u)$ are pointwise decreasing in $\eta$, and by step 2 converges pointwise to $\bm{\nu} \mapsto Q^{*}_{\bm{\nu}}(y,u)$ as $\eta \downarrow 0$. Therefore, by Dini's theorem, we conclude that for each $y \in \tilde{\mathcal{Y}}$ and $u \in U$, $\bm{\nu} \mapsto Q^{*}_{\eta, \bm{\nu}}(y,u)$ converges uniformly to $\bm{\nu} \mapsto Q^{*}_{\bm{\nu}}(y,u)$ as $\eta \to 0^+$. 
    
    \item For any $\bm{\nu} \in \Pcal(\tilde{\mathcal{Y}})^{\infty}$, we have that for any $y \in \tilde{\mathcal{Y}}$ and $u \in U$, $\lim_{\eta \to 0^+} Q^{\Phi_{\eta}(\bm{\nu})}_{\bm{\nu}}(y,u)= Q^{*}_{\bm{\nu}}(y,u)$.\vspace{0.5em}\\
    \textit{Proof of claim 4.} We show that $\lim_{\eta \to 0^+} Q^{T,\Phi_{\eta}(\bm{\nu})}_{\bm{\nu}}(y,u)= \QTstar_{\bm{\nu}}(y,u)$ for any $T<\infty$ with $T \in \N$, the infinite horizon case then follows from the same argument as in step 2. We proceed to prove via induction. Firstly, we note that $\lim_{\eta \to 0^+} Q^{T,\Phi_{\eta}(\bm{\nu})}_{\bm{\nu}}(T, y,u)= \QTstar_{\bm{\nu}}(T,y,u)$ holds trivially. Now let $\varepsilon> 0$ and $t \leq T+1$. Since $\tilde{\mathcal{Y}}$ and $U$ are compact, $Q^{T,\Phi_{\eta}(\bm{\nu})}_{\bm{\nu}}$ and $\QTstar_{\bm{\nu}}$ are locally uniformly continuous on $\tilde{\mathcal{Y}} \times U$. Therefore, for the induction assumption, we can assume that for each $y \in \tilde{\mathcal{Y}}$ and $u \in U$, there exists a neighbourhood $U_{y,u}$ of $(y,u)$ and some $\eta_3(y,u) > 0$ such that for all $\eta < \eta_3(y,u)$ and for all $(y,u)$ in $U_{y,u}$, we have $ \lvert Q^{T,\Phi_{\eta}(\bm{\nu})}_{\bm{\nu}}(t+1,y,u) - \QTstar_{\bm{\nu}}(t + 1,y,u) \rvert < \varepsilon$. By compactness there exists a finite subcover $\{U_{y_k,u_k}\}_{1 \leq k \leq K}$ of $\tilde{\mathcal{Y}} \times U$, and by letting $\eta^{*}_3 = \min\{ \eta_3(y_k, u_k): 1 \leq k \leq K\}$, we have that  $ \lvert Q^{T,\Phi_{\eta}(\bm{\nu})}_{\bm{\nu}}(t+1,y,u) - \QTstar_{\bm{\nu}}(t + 1,y,u) \rvert < \varepsilon$ for all $y \in \tilde{\mathcal{Y}}$ and $u \in U$. \\
    
    Next, we proceed to bound from above the probability of making a suboptimal action. By applying the same argument as in step 3 for the finite horizon case, we can find $\eta_4 > 0$ such that for all $\eta < \min\{ \eta^*_3, \eta_4 \}$,
        \begin{align*}
        \lvert \QTstar_{\bm{\nu}}(t,y,u) -\QTstar_{\eta,\bm{\nu}}(t,y,u) \rvert \leq \frac{\varepsilon}{4}.
    \end{align*}
     Let $U^{*}_y \coloneqq \{ \pr{u} \in U: \QTstar_{\bm{\nu}}(t, y, \pr{u}) = \sup_{u \in U} \QTstar_{\bm{\nu}}(t, y, u) \}$, which is non-empty since $U$ is compact and $\QTstar_{\bm{\nu}}$ is continuous. Let ${L_{\varepsilon}(U^{*}_y) \coloneqq \{ \pr{u} \in U: \QTstar_{\bm{\nu}}(t, y, \pr{u}) \geq \sup_{u \in U} \QTstar_{\bm{\nu}}(t, y, u) - \varepsilon \}}$, and let $u^{*} \in U^{*}_y$. Choose $\pr{\varepsilon} \in (0, \varepsilon/2)$. Then, by letting $\Delta_u\coloneqq \QTstar_{\eta,\bm{\nu}}(t,y,u) - \QTstar_{\bm{\nu}}(t,y,u)$ and $\pr{\Delta}_u \coloneqq \QTstar_{\bm{\nu}}(t,y,u^{*}) - \QTstar_{\bm{\nu}}(t,y,u) $, we have
    \begin{align*}
        \Phi_{\eta}(\bm{\nu})_t (U \setminus L_{\varepsilon}(U^{*}_y) \mid y) &= \frac{\int_{U \setminus L_{\varepsilon}(U^{*}_y)} \exp(\QTstar_{\eta,\bm{\nu}}(t,y,u)/\eta) q(\de u)}{\int_U \exp(\QTstar_{\eta,\bm{\nu}}(t,y,u)/\eta) q(\de u)} \\
        &\leq \frac{\int_{U \setminus L_{\varepsilon}(U^{*}_y)} \exp(\QTstar_{\eta,\bm{\nu}}(t,y,u)/\eta) q(\de u)}{\int_{L_{\pr{\varepsilon}}(U^{*}_y)} \exp(\QTstar_{\eta,\bm{\nu}}(t,y,u)/\eta) q(\de u)} \\
        & = \frac{\int_{U \setminus L_{\varepsilon}(U^{*}_y)} \exp((\Delta_u - \pr{\Delta}_u)/\eta) q(\de u)}{\int_{L_{\pr{\varepsilon}}(U^{*}_y)} \exp((\Delta_u - \pr{\Delta}_u) / \eta) q(\de u)},
    \end{align*}
    where the last equality is obtained by multiplying both the numerator and denominator by $\exp(-\QTstar_{\bm{\nu}}(t,y,u^{*})/\eta)$.  Now for any $u \notin L_{\varepsilon}(U^*_y)$, we have $\pr{\Delta}_u > \varepsilon$, and for any $u \in L_{\pr{\varepsilon}}(U^*_y)$, we have $\pr{\Delta}_u \leq \pr{\varepsilon}$. Therefore,
    \begin{align}\label{estimate_softmax}
        \Phi_{\eta}(\bm{\nu})_t (U \setminus L_{\varepsilon}(U^{*}_y) \mid y) &\leq \frac{q(U \setminus L_{\varepsilon}(U^{*}_y)) \exp((\varepsilon/4 -\varepsilon)/\eta) }{q( L_{\pr{\varepsilon}}(U^{*}_y)) \exp((-\varepsilon/4 - \pr{\varepsilon}) /\eta)} \nonumber \\
        &= \frac{q(U \setminus L_{\varepsilon}(U^{*}_y)) }{q( L_{\pr{\varepsilon}}(U^{*}_y))} \exp\left(\frac{- \varepsilon/2 +\pr{\varepsilon}}{\eta}\right)
    \end{align}
    
    By a similar argument as in \Cref{lemma:q_bound}, $Q^{T,*}_{\bm{\nu}}$ and $ Q^{T,\Phi_{\eta}(\bm{\nu})}_{\bm{\nu}}$ are bounded by some constant $M_Q>0$,
    \begin{align*}
         &\lvert Q^{T,\Phi_{\eta}(\bm{\nu})}_{\bm{\nu}}(t.y,u) - \QTstar_{\bm{\nu}}(t,y,u) \rvert\\
         =\  & \left \lvert \int_{\tilde{\mathcal{Y}}}  \left(\int_U  Q^{T,\Phi_{\eta}(\bm{\nu})}_{\bm{\nu}} (t+1, \pr{y}, \pr{u})\Phi_{\eta}(\de \pr{u} \mid \pr{y})_t - \sup_{\pr{u} \in U} \QTstar_{\bm{\nu}}(t+1, \pr{y}, \pr{u}) \right) p_y (\de \pr{y} \mid y, u , \bm{\mu}^{\nu}_t)\right \rvert\\
         \leq\ &\sup_{\bar{y}\in\tilde{\mathcal{Y}}} \left\lvert \left(\int_{L_{\varepsilon}(U^{*}_{\bar{y}})} + \int_{U \setminus L_{\varepsilon}(U^{*}_{\bar{y}})}\right) Q^{T,\Phi_{\eta}(\bm{\nu})}_{\bm{\nu}} (t+1, \bar{y}, \pr{u})\Phi_{\eta}(\de \pr{u} \mid \bar{y})_t - \sup_{\pr{u} \in U} \QTstar_{\bm{\nu}}(t+1, \bar{y}, \pr{u}) \right\rvert \\
         \leq\ & I_1 + I_2 + I_3,
    \end{align*}
    where
    \begin{align*}
        I_1 \coloneqq & \sup_{\bar{y} \in \tilde{\mathcal{Y}}} \left \lvert \int_{L_{\varepsilon}(U^*_{\bar{y}})} \left( Q^{T,\Phi_{\eta}(\bm{\nu})}_{\bm{\nu}} (t+1, \bar{y}, \pr{u}) - \sup_{\bar{u} \in U} \QTstar_{\bm{\nu}} (t+1, \bar{y}, \bar{u}) \right) \Phi_{\eta}(\bm{\nu})_t (\de \pr{u} \mid \bar{y}) \right \rvert\\
        <&\ \varepsilon \\
        I_2  \coloneqq & \sup_{\bar{y}\in\tilde{\mathcal{Y}}} \left \lvert \int_{L_{\varepsilon}(U^{*}_{\bar{y}})} \sup_{\bar{u} \in U} \QTstar_{\bm{\nu}}(t+1, \bar{y}, \bar{u}) \Phi_{\eta}(\bm{\nu})_t(\de \pr{u} \mid \bar{y})- \sup_{\pr{u} \in U} \QTstar_{\bm{\nu}}(t+1, \bar{y}, \pr{u})\  \right \rvert  \\
        = &\ \Phi_{\eta}(\bm{\nu})_t(U\setminus L_{\varepsilon}(U^{*}_{\bar{y}}) \mid \bar{y}) \ \sup_{\bar{y}\in\tilde{\mathcal{Y}}}  \sup_{\bar{u} \in U} \QTstar_{\bm{\nu}}(t+1, \bar{y}, \bar{u})  \\
        < &\ M_Q \frac{q(U \setminus L_{\varepsilon}(U^{*}_{\bar{y}})) }{q(L_{\pr{\varepsilon}}(U^{*}_{\bar{y}}))} \exp\left(\frac{- \varepsilon/2 +\pr{\varepsilon}}{\eta}\right)\\
        I_3  \coloneqq &\sup_{\bar{y}\in\tilde{\mathcal{Y}}} \left \lvert \int_{U\setminus L_{\varepsilon}(U^{*}_{\bar{y}})} Q^{T,\Phi_{\eta}(\bm{\nu})}_{\bm{\nu}} (t+1, \bar{y}, \pr{u})\  \Phi_{\eta}(\de \pr{u} \mid \bar{y})  \right \rvert \\
        < &\ M_Q \frac{q(U \setminus L_{\varepsilon}(U^{*}_{\bar{y}})) }{q(L_{\pr{\varepsilon}}(U^{*}_{\bar{y}}))} \exp\left(\frac{- \varepsilon/2 +\pr{\varepsilon}}{\eta}\right),
    \end{align*}
    noting that the bound for $I_1$ follows from the induction assumption.  The claim then holds, since for any $\varepsilon_0 > 0$, one can always choose $\varepsilon > 0 $ , $\pr{\varepsilon} \in (0, \varepsilon/2)$ and $\eta < \min\{\eta^*_3, \eta_4 \}$ such that $I_1 + I_2 +I_3 < \varepsilon_0$.
    
    \item The function $(y,u,\bm{\nu}) \mapsto Q^{\Phi_{\eta}(\bm{\nu})}_{\bm{\nu}}(y,u)$ converges uniformly to $(y,u,\bm{\nu}) \mapsto Q^{*}_{\bm{\nu}}(y,u)$ as $\eta \downarrow 0$. \vspace{0.5em}\\
    \textit{Proof of claim 5.} As in step 4, we first treat the finite horizon case.  
    We show that there exists $\bar{\eta}$ such that the family of functions
    \begin{align*}
        \left\{\bm{\nu} \mapsto Q^{T,\Phi_{\eta}(\bm{\nu})}_{\bm{\nu}} (t,y,u)\right\}_{t\in [0,T], y \in \tilde{\mathcal{Y}}, u \in U, \eta < \bar{\eta}}
    \end{align*}
    are equicontinuous over $\bm{\nu} \in \Pcal(\tilde{\mathcal{Y}})^{\infty}$. In order to do so, we show that given $\varepsilon>0$, for each $(t,y,u)$, there exists $\delta_{t,y,u} >0$, such that for any $\hat{\bm{\nu}} \in \Pcal(\tilde{\mathcal{Y}})^{\infty}$ with $\delta_{\infty}(\bm{\nu}, \hat{\bm{\nu}}) < \delta_{t,y,u}$ and for all $\eta < \bar{\eta}$, we have
    \begin{align}\label{eq:equi_q}
        \left\lvert  Q^{T,\Phi_{\eta}(\bm{\nu})}_{\bm{\nu}}(
        t,y,u) -  Q^{T,\Phi_{\eta}(\bm{\hat{\nu}})}_{\bm{\hat{\nu}}}(
        t,y,u)\right\rvert < \varepsilon.
    \end{align}
    Then, by a compactness argument as in step 4, we can obtain a $\delta_{\rm min} > 0$ such that for any $\hat{\bm{\nu}}$ satisfying $\delta_{\infty}(\bm{\nu}, \hat{\bm{\nu}}) < \delta_{\rm \min}$ and for all $\eta < \bar{\eta}$, we have
    \begin{align*}
        \sup_{t,y,u}\lvert  Q^{T,\Phi_{\eta}(\bm{\nu})}_{\bm{\nu}}(t,y,u) -  Q^{T,\Phi_{\eta}(\bm{\hat{\nu}})}_{\bm{\hat{\nu}}}(t,y,u)\rvert < \varepsilon.
    \end{align*}
    We now proceed to establish \eqref{eq:equi_q} with an induction argument over $t \in [0,T]$. Starting with the base case $t = T$, since $r_y$ is Lipschitz, we can take $\delta_{T,y,u} = \varepsilon/L_R L_M$, so that by \Cref{prop:lip_cont} and \Cref{lem:nu_to_mu}, 
     \begin{align*}
        \left\lvert  Q^{T,\Phi_{\eta}(\bm{\nu})}_{\bm{\nu}}(
        T,y,u) -  Q^{T,\Phi_{\eta}(\bm{\hat{\nu}})}_{\bm{\hat{\nu}}}(
        T,y,u)\right\rvert < \lvert r_y(y,u,\bm{\mu}^{\nu}_t) - r_y(y,u,\bm{\hat{\mu}}^{\nu}_t) \rvert < L_RL_M\ \delta_{\infty}(\bm{\nu}, \hat{\bm{\nu}})<  \varepsilon.
    \end{align*}
    Now assume for the induction hypothesis that for some $t+1 \leq T$, there exists $\delta_{t+1}$ such that for any $\hat{\bm{\nu}}$ satisfying $\delta_{\infty}(\bm{\nu}, \hat{\bm{\nu}}) < \delta_{t+1}$ and for all $\eta < \bar{\eta}$, we have
    \begin{align*}
        \sup_{t,y,u}\lvert  Q^{T,\Phi_{\eta}(\bm{\nu})}_{\bm{\nu}}(t+1,y,u) -  Q^{T,\Phi_{\eta}(\bm{\hat{\nu}})}_{\bm{\hat{\nu}}}(t+1,y,u)\rvert < \frac{\varepsilon}{6}.
    \end{align*} 
    Then, for the next index $t$,
    \begin{align}\label{step 5 eq}
        &\left\lvert  Q^{T,\Phi_{\eta}(\bm{\nu})}_{\bm{\nu}}(
        t,y,u) -  Q^{T,\Phi_{\eta}(\bm{\hat{\nu}})}_{\bm{\hat{\nu}}}(
        t,y,u)\right\rvert \nonumber \\
        \leq\ & \lvert r_y(y,u,\bm{\mu}^{\nu}_t) - r_y(y,u,\bm{\hat{\mu}}^{\nu}_t) \rvert \nonumber \\
        &+ \int_{\tilde{\mathcal{Y}}}  \int_{U} \lvert Q^{T,\Phi_{\eta}(\bm{\nu})}_{\bm{\nu}}(
        t+1,\pr{y},\pr{u})\rvert \Phi_{\eta}(\bm{\nu})_t( \de \pr{u} \mid \pr{y})\ \lvert p_y(\de \pr{y} \mid y, u, \bm{\mu}^{\nu}_t) -  p_y(\de \pr{y} \mid y, u, \bm{\hat{\mu}}^{\nu}_t)\rvert \nonumber\\
        &+  \sup_{\bar{y} \in \tilde{\mathcal{Y}}} \left \lvert \int_{U} Q^{T,\Phi_{\eta}(\bm{\nu})}_{\bm{\nu}}(
        t+1,\bar{y},\pr{u}) \Phi_{\eta}(\bm{\nu})_t( \de \pr{u} \mid \bar{y})-  \int_{U} Q^{T,\Phi_{\eta}(\bm{\hat{\nu}})}_{\bm{\hat{\nu}}}(
        t+1,\bar{y},\pr{u}) \Phi_{\eta}(\bm{\hat{\nu}})_t( \de \pr{u} \mid \bar{y}) \right \rvert
    \end{align}
    For the first term in \eqref{step 5 eq}, we have $\lvert r_y(y,u,\bm{\mu}^{\nu}_t) - r_y(y,u,\bm{\hat{\mu}}^{\nu}_t) \rvert < L_R L_M \ \delta_{\infty}(\bm{\nu}, \hat{\bm{\nu}})$. For the second term, we use \Cref{prop: maxmin_ineq} to get
    \begin{align*}
        &\int_{\tilde{\mathcal{Y}}}  \int_{U} \lvert Q^{T,\Phi_{\eta}(\bm{\nu})}_{\bm{\nu}}(
        t+1,\pr{y},\pr{u})\rvert \Phi_{\eta}(\bm{\nu})_t( \de \pr{u} \mid \pr{y})\ \lvert p_y(\de \pr{y} \mid y, u, \bm{\mu}^{\nu}_t) -  p_y(\de \pr{y} \mid y, u, \bm{\hat{\mu}}^{\nu}_t)\rvert \\
        \leq\ &2 M_Q\ \delta_{TV}(p_y(\cdot \mid y, u, \bm{\mu}^{\nu}_t), p_y(\cdot \mid y, u, \bm{\hat{\mu}}^{\nu}_t) )\\
        \leq\ & 2 M_Q L_P L_M\ \delta_{\infty}(\bm{\nu}, \bm{\hat{\nu}}).
    \end{align*}
    Before considering the third term in \eqref{step 5 eq}, we note that  $\bm{\nu} \mapsto \Phi_{\eta}(\bm{\nu})_t$ is continuous (with respect to the weak topology for the space of measures),
    and consider a continuous bounded function $f: U \to \R$ with a sequence of measures $(\bm{\nu}_j)_j$ converging to $\bm{\nu}$. Let $U^{*}_y (\bm{\nu}) \coloneqq \{ \pr{u} \in U: \QTstar_{\bm{\nu}}(t, y, \pr{u}) = \sup_{u \in U} \QTstar_{\bm{\nu}}(t, y, u) \}$, and similarly let $L_{\varepsilon}(U^{*}_y (\bm{\nu})) \coloneqq \{ \pr{u} \in U: \QTstar_{\bm{\nu}}(t, y, \pr{u}) \geq  \sup_{u \in U} \QTstar_{\bm{\nu}}(t, y, u) -\varepsilon\}$. Then
    \begin{align}\label{eq:weak_step5}
        &\left \lvert \int_{L_{\varepsilon}(U^*_{y}(\bm{\nu}_j))} f(u) \Phi_{\eta}(\bm{\nu}_j)_t( \de \pr{u} \mid y) -  \int_{L_{\varepsilon}(U^*_{y}(\bm{\nu}))} f(u) \Phi_{\eta}(\bm{\nu})_t( \de \pr{u} \mid y) \right \rvert \nonumber \\
       \leq\ & \left\lvert \left(\int_{L_{\varepsilon}(U^*_{y}(\bm{\nu}_j))} - \int_{L_{\varepsilon}(U^*_{y}(\bm{\hat{\nu}}))}\right) f(u) \Phi_{\eta}(\bm{\nu})_t( \de \pr{u} \mid y) \right\rvert \nonumber  \\
       & \quad + \left \lvert   \int_{L_{\varepsilon}(U^*_{y}(\bm{\nu}))}f(u) ( \Phi_{\eta}(\bm{\nu}_j)_t( \de \pr{u} \mid y)  - \Phi_{\eta}(\bm{\nu})_t( \de \pr{u} \mid y) )\right \rvert 
    \end{align}
    Now consider the symmetric difference
    \begin{align*}
        L_{\varepsilon}(U^*_{y}(\bm{\nu}_j)) \Delta L_{\varepsilon}(U^*_{y}(\bm{\nu})) \coloneqq \big(L_{\varepsilon}(U^*_y(\bm{\nu}_j)) \setminus L_{\varepsilon}(U^*_y(\bm{\nu}))\big)\ \cup\ \big(L_{\varepsilon}(U^*_y(\bm{\nu})) \setminus L_{\varepsilon}(U^*_y(\bm{\nu}_j)) \big).
    \end{align*}
    By the continuity of $\QTstar_{\bm{\nu}}$ with respect to $\bm{\nu}$, we have $d_H(L_{\varepsilon}(U^*_y(\bm{\nu_j})), L_{\varepsilon}(U^*_y(\bm{\nu})) \to 0$ as $j \to \infty$ (cf. \Cref{hausdorff}). Therefore, there exists $\varepsilon_0 \in (0, \varepsilon)$ such that
    \begin{align*}
         L_{\varepsilon}(U^*_{y}(\bm{\nu}_j)) \Delta L_{\varepsilon}(U^*_{y}(\bm{\nu})) \subset L_{\varepsilon + \varepsilon_0}(U^*_y(\bm{\nu}))  \setminus L_{\varepsilon - \varepsilon_0}(U^*_y(\bm{\nu}))  \eqqcolon A_{\varepsilon_0},
    \end{align*}
    so that returning to \eqref{eq:weak_step5}, we have 
    \begin{align*}
    \left \lvert \int_{L_{\varepsilon}(U^*_{y}(\bm{\nu}_j))} f(u) \Phi_{\eta}(\bm{\nu}_j)_t( \de \pr{u} \mid y) -  \int_{L_{\varepsilon}(U^*_{y}(\bm{\nu}))} f(u) \Phi_{\eta}(\bm{\nu})_t( \de \pr{u} \mid y) \right \rvert \leq \sup_{u \in U} f(u) q(A_{\varepsilon_0}).
    \end{align*}
    As $\varepsilon_0 \downarrow 0$, the set $A_{\varepsilon_0}$ decreases towards the empty set $\emptyset$. Therefore, sending $\varepsilon \downarrow 0$, we have $\sup_{u \in U} f(u) q(A_{\varepsilon_0}) \to 0$, with the convergence independent of $\eta$. Next, from an analogous argument to step 4 and \eqref{estimate_softmax}, we have
    \begin{align*}
        &\ \Phi_{\eta}(\bm{\nu_j})_t (U \setminus L_{\varepsilon}(U^{*}_y(\bm{\nu}_j)) - \Phi_{\eta}(\bm{\nu})_t (U \setminus L_{\varepsilon}(U^{*}_y(\bm{\nu})) \mid y) \\
        \leq & \frac{2q(U \setminus L_{\varepsilon}(U^{*}_y(\bm{\nu}))) }{q( L_{\pr{\varepsilon}}(U^{*}_y(\bm{\nu})))} \exp\left(\frac{- \varepsilon/2 +\pr{\varepsilon}}{\eta}\right)
    \end{align*}
    for some $\pr{\varepsilon} \in(0, \varepsilon)$. Therefore, for any $\eta \in (0, \bar{\eta})$, there exists a $\delta_{\bar{\eta}} > 0$, such that for any $\bm{\hat{\nu}}$ satisfying $\delta_{\infty}(\bm{\nu}, \bm{\hat{\nu}}) < \delta_{\bar{\eta}}$, we have 
    \begin{align*}
        \left \lvert \int_{U } f(u) \Phi_{\eta}(\bm{\nu})_t( \de \pr{u} \mid y) -  \int_{U} f(u) \Phi_{\eta}(\bm{\hat{\nu}})_t( \de \pr{u} \mid y) \right \rvert < \frac{\varepsilon}{6}.
    \end{align*}
    Returning to the third term of \eqref{step 5 eq}, we conclude that for any $\bm{\hat{\nu}}$ satisfying $\delta_{\infty}(\bm{\nu}, \bm{\hat{\nu}}) < \min\{\delta_{t+1},\delta_{\bar{\eta}}\}$, we have 
    \begin{align*}
         &\sup_{\bar{y} \in \tilde{\mathcal{Y}}} \left \lvert \int_{U} Q^{T,\Phi_{\eta}(\bm{\nu})}_{\bm{\nu}}(
        t+1,\bar{y},\pr{u}) \Phi_{\eta}(\bm{\nu})_t( \de \pr{u} \mid \bar{y})-  \int_{U} Q^{T,\Phi_{\eta}(\bm{\hat{\nu}})}_{\bm{\hat{\nu}}}(
        t+1,\bar{y},\pr{u}) \Phi_{\eta}(\bm{\hat{\nu}})_t( \de \pr{u} \mid \bar{y}) \right \rvert \\
        \leq\ & \sup_{\bar{y} \in \tilde{\mathcal{Y}}} \left \lvert \int_{U} Q^{T,\Phi_{\eta}(\bm{\nu})}_{\bm{\nu}}(
        t+1,\bar{y},\pr{u}) \Phi_{\eta}(\bm{\nu})_t( \de \pr{u} \mid \bar{y})-  \int_{U} Q^{T,\Phi_{\eta}(\bm{\nu})}_{\bm{
        \nu}}(t+1,\bar{y},\pr{u}) \Phi_{\eta}(\bm{\hat{\nu}})_t( \de \pr{u} \mid \bar{y}) \right \rvert \\
        & \ + \sup_{\bar{y} \in \tilde{\mathcal{Y}}} \left \lvert \int_{U} Q^{T,\Phi_{\eta}(\bm{\nu})}_{\bm{\nu}}(
        t+1,\bar{y},\pr{u}) \Phi_{\eta}(\bm{\hat{\nu}})_t( \de \pr{u} \mid \bar{y}) - \int_{U} Q^{T,\Phi_{\eta}(\bm{\hat{\nu}})}_{\bm{
        \hat{\nu}}}(t+1,\bar{y},\pr{u}) \Phi_{\eta}(\bm{\hat{\nu}})_t( \de \pr{u} \mid \bar{y}) \right \rvert\\
        <\ & \frac{\varepsilon}{3}
    \end{align*}
    Finally, we conclude the induction proof, by taking 
    \begin{align*}
        \delta_{t,y,u} =\min\{ \varepsilon/ (3L_R L_M ), \varepsilon/(6 M_Q L_P L_M), \delta_{t+1}, \delta_{\bar{\eta}}\}.
    \end{align*}
    Claim 5 for the finite horizon case then follows by the Arzelà–Ascoli theorem, as $\Pcal(\tilde{\mathcal{Y}})^{\infty}$ is compact by Tychonoff's theorem.    For the infinite horizon case, note that since the reward function $r_y$ is bounded, given $\varepsilon >0$, we can find for some large $T$ such that
    \begin{align*}
         \lvert Q^{\Phi_{\eta}(\bm{\nu}^{*})}_{\bm{\nu}}(y,u) - Q^{\Phi_{\eta}(\bm{\hat{\nu}}^{*})}_{\bm{\hat{\nu}}}(y,u) \rvert \
        \leq \  \lvert Q^{T,\Phi_{\eta}(\bm{\nu}^{*})}_{\bm{\nu}}(y,u) - Q^{T,\Phi_{\eta}(\bm{\hat{\nu}}^{*})}_{\bm{\hat{\nu}}}(y,u) \rvert + \frac{\varepsilon}{2}.
    \end{align*}
    Therefore, for sufficiently small $\eta$ and $\delta$, with $\delta_{\infty}(\bm{\nu}, \hat{\bm{\nu}}) < \delta$,
    \begin{align*}
        \lvert Q^{T,\Phi_{\eta}(\bm{\nu})}_{\bm{\nu}}(y,u) - Q^{T,\Phi_{\eta}(\bm{\hat{\nu}})}_{\bm{\hat{\nu}}}(y,u) \rvert < \frac{\varepsilon}{2},
    \end{align*}
     so that we conclude uniform convergence by the Arzel\`a--Ascoli theorem.
\end{enumerate}
\end{proof}

This leads to the following approximate Nash equilibria for a sequence of regularised MFNE.
\begin{thm}\label{thm:epsilon_nash}
    Let $(\eta_j)_j$ be a sequence with $\eta_j \downarrow 0$. For each $j$, let $(\pi^{(j)}_*,  \bm{\nu}^{(j)}_*) \in \mathcal{A}_{\rm DM} \times \Pcal(\mathcal{Y})^{\infty}$ be an associated regularised mean field Nash equilibrium (MFNE), defined as in \Cref{defn:regularised MFNE-MCDM}, for the MCDM-MFG. Then for any $\varepsilon > 0$, there exists $\pr{j}, \pr{N} \in \N$ such that for all $j\geq\pr{j}$ and $N \geq \pr{N}$, the policy $(\pi^{(j)}_*, \ldots, \pi^{(j)}_*)$ is $\varepsilon$-Nash for the $N$-player game. That is,
    \begin{align*}
        J^N_n(\pi^{(j)}_*, \ldots, \pi^{(j)}_*) \geq \sup_{\pi \in \mathcal{A}_{\rm DM}} J^N_n(\pi, \pi^{(j), -n}_*) - \varepsilon \quad \mbox{for all $n \in \{1, \ldots, N\}$},
    \end{align*}
    where $\pi^{(j), -n}_*$ represents the policy $\pi^{(j)}_*$ being applied to all players except player $n$.
\end{thm}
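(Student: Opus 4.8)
The plan is to interpose the unregularised mean-field objective $J_{\bm{\nu}_*}(\cdot)$ between the finite-player objective $J^N_n$ and the best deviation value, and to bound the three resulting discrepancies separately: two \emph{propagation-of-chaos} terms that vanish as $N\to\infty$, and one \emph{vanishing-regularisation} term controlled by \Cref{lemma_uniform_convergence}. Throughout I write $\bm{\nu}_* = \bm{\nu}^{(j)}_*$, $\pi_* = \pi^{(j)}_* = \Phi_{\eta_j}(\bm{\nu}_*)$, and let $\bm{\mu}^{\nu_*}$ denote the induced underlying flow from \Cref{defn:aug_to_underlying}.

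First I would establish a law-of-large-numbers estimate for the empirical flow. When the $N-1$ players other than $n$ follow the common policy $\pi_*$ while player $n$ uses an arbitrary $\pi \in \mathcal{A}_{\rm DM}$, the underlying empirical distribution $e^N_t = \frac{1}{N}\sum_k \delta_{x^k_t}$ concentrates on the deterministic flow $\mu_{t,0}^{\nu_*}$ as $N\to\infty$. The crucial point is that player $n$ contributes only a $1/N$ fraction of $e^N_t$, so any deviation it makes moves the empirical measure by at most $1/N$; the remaining players form a mean-field system whose empirical law tracks $\bm{\mu}^{\nu_*}$ through the recursion defining $\Psi^{\mathrm{aug}}$, the kernel \eqref{aug_transition}, and the Lipschitz map $\nu_t \mapsto \bm{\mu}^{\nu}_t$ of \Cref{lem:nu_to_mu}. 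Propagating this through the bounded, Lipschitz data of \Cref{prop:lip_cont} and \Cref{assumption_lipschitz_p_r} and summing the geometric discount yields, for some $\beta_N \to 0$ that is uniform in $\pi$ and in $j$,
\begin{align*}
 \bigl| J^N_n(\pi, \pi_*^{-n}) - J_{\bm{\nu}_*}(\pi) \bigr| \leq \beta_N \quad \text{for all } \pi \in \mathcal{A}_{\rm DM},
\end{align*}
and in particular $\bigl| J^N_n(\pi_*,\dots,\pi_*) - J_{\bm{\nu}_*}(\pi_*) \bigr| \leq \beta_N$.

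Next I would assemble the chain of inequalities. Taking suprema over deviations gives $\sup_{\pi} J^N_n(\pi, \pi_*^{-n}) \leq \sup_{\pi} J_{\bm{\nu}_*}(\pi) + \beta_N$, where $\sup_{\pi} J_{\bm{\nu}_*}(\pi)$ is the optimal \emph{unregularised} value $J^*_{\bm{\nu}_*}$, i.e.\ $Q^*_{\bm{\nu}_*}$ integrated against $\nu_0$; meanwhile $J_{\bm{\nu}_*}(\pi_*)$ is $Q^{\Phi_{\eta_j}(\bm{\nu}_*)}_{\bm{\nu}_*}$ integrated against $\nu_0$. Subtracting,
\begin{align*}
 \sup_{\pi \in \mathcal{A}_{\rm DM}} J^N_n(\pi, \pi_*^{-n}) - J^N_n(\pi_*,\dots,\pi_*) \leq \bigl( J^*_{\bm{\nu}_*} - J_{\bm{\nu}_*}(\pi_*) \bigr) + 2\beta_N,
\end{align*}
and the first bracket is bounded by $\sup_{y,u,\bm{\nu}} \bigl| Q^*_{\bm{\nu}}(y,u) - Q^{\Phi_{\eta_j}(\bm{\nu})}_{\bm{\nu}}(y,u) \bigr|$, which tends to $0$ as $\eta_j\downarrow 0$ by \Cref{lemma_uniform_convergence} (whose convergence is uniform in $\bm{\nu}$, hence valid along the varying sequence $\bm{\nu}^{(j)}_*$). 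Given $\varepsilon>0$, I first choose $j'$ so that this bracket is below $\varepsilon/2$ for all $j\geq j'$, then $N'$ so that $2\beta_N<\varepsilon/2$ for all $N\geq N'$, which yields the claimed $\varepsilon$-Nash property.

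The main obstacle I anticipate is the propagation-of-chaos estimate of the second step, and in particular securing its uniformity simultaneously in the deviating policy $\pi$ and in the varying equilibrium flow $\bm{\nu}^{(j)}_*$. The delayed, augmented dynamics complicate the standard argument: each agent acts on its own partially observed augmented state $y^k_t$, while the interaction enters through the current-time underlying empirical measure $e^N_t$, which is not directly observed. One must therefore track how the empirical law on the augmented space propagates under $p_y$ and is then mapped back to the underlying space via the Lipschitz map of \Cref{lem:nu_to_mu}, controlling the induced errors recursively in $t$ with a discount-summable rate. Here the boundedness and Lipschitz regularity of the data, together with the finiteness of the delay horizon $d_0$, are what make the recursion closeable and the error $\beta_N$ uniform.
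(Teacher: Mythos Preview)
Your decomposition is exactly the one the paper uses: sandwich the unregularised mean-field objective $J_{\bm{\nu}^{(j)}_*}(\cdot)$ between the finite-player and the deviation values, and close the gap with \Cref{lemma_uniform_convergence} on one side and a propagation-of-chaos estimate on the other. The only substantive difference is that the paper does not carry out the propagation-of-chaos step itself but simply invokes \cite[Theorem~4.10]{saldi_mfg_discrete} to obtain, for a near-optimal deviation $\tau^N$, the two bounds $\lvert J^N_1(\tau^N,\pi^{(j)}_*,\dots,\pi^{(j)}_*)-J_{\bm{\nu}^{(j)}_*}(\tau^N)\rvert<\varepsilon/6$ and $\lvert J^N_1(\pi^{(j)}_*,\dots,\pi^{(j)}_*)-J_{\bm{\nu}^{(j)}_*}(\pi^{(j)}_*)\rvert<\varepsilon/6$; your sketch via the Lipschitz map of \Cref{lem:nu_to_mu} and the recursive augmented dynamics is precisely what underlies that cited result, so the two routes coincide in substance. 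One minor imprecision: your claim that the bracket $J^*_{\bm{\nu}_*}-J_{\bm{\nu}_*}(\pi_*)$ is bounded by $\sup_{y,u,\bm{\nu}}\lvert Q^*_{\bm{\nu}}-Q^{\Phi_{\eta_j}(\bm{\nu})}_{\bm{\nu}}\rvert$ is not literally correct, since $J_{\bm{\nu}_*}(\pi_*)=\int\!\int Q^{\pi_*}_{\bm{\nu}_*}(0,y,u)\,\pi_{*,0}(\de u\mid y)\,\nu_0(\de y)$ also involves the policy averaging; to get the full conclusion one also needs the softmax-concentration estimate~\eqref{estimate_softmax} from step~4 of the proof of \Cref{lemma_uniform_convergence}, which the paper likewise invokes implicitly at the same point.
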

    \begin{proof}
    By \Cref{lemma_uniform_convergence} we have the uniform convergence of $\bm{\nu} \mapsto Q^{\Phi_{\eta_j}(\bm{\nu})}_{\bm{\nu}}$ to $\bm{\nu} \mapsto Q^{*}_{\bm{\nu}}$, therefore the regularised policy is approximately optimal for the MFG: for any $\varepsilon >0$, there exists $\pr{j} \in \N$ such that for all $j \geq \pr{j}$,
    \begin{align*}
        J_{\bm{\nu}^{(j)}_*}(\pi^{(j)}_*) \geq \sup_{\pi \in \mathcal{A}_{\rm DM}} J_{\bm{\nu}^{(j)}_*} (\pi) - \frac{\varepsilon}{2}.
    \end{align*}
    Let $(\tau^N)_{N\in \N}$ be a sequence of policies such that for each $N$,
    \begin{align*}
        \sup_{\pi \in \mathcal{A}_{\rm DM}} J^N_1(\pi, \pi^{(j)}_*, \ldots, \pi^{(j)}_*) - \frac{\varepsilon}{6} \leq J^N_1( \tau^N, \pi^{(j)}_*, \ldots, \pi^{(j)}_*).
    \end{align*}
    By \cite[Theorem 4.10]{saldi_mfg_discrete}, there exists $\pr{N} \in \N$ such that for all $N > \pr{N}$,
    \begin{align*}
        \lvert J^N_1(\tau^N, \pi^{(j)}_*, \ldots, \pi^{(j)}_*) - J_{\bm{\nu}^{(j)}_*}(\tau^N) \rvert &< \frac{\varepsilon}{6},\\
        \lvert J^N_1(\pi^{(j)}_*, \ldots, \pi^{(j)}_*) - J_{\bm{\nu}^{(j)}_*}(\pi^{(j)}_*) \rvert &< \frac{\varepsilon}{6}.
    \end{align*}
    Combining the above we have
    \begin{align*}
        \sup_{\pi \in \mathcal{A}_{\rm DM}} J^N_1(\pi, \pi^{(j)}_*, \ldots, \pi^{(j)}_*) - \frac{\varepsilon}{6} \leq \sup_{\pi \in \mathcal{A}_{\rm DM}} J_{\bm{\nu}^{(j)}_*}(\pi)  + \frac{\varepsilon}{6}
    \end{align*}
    so that
    \begin{align*}
        \sup_{\pi \in \mathcal{A}_{\rm DM}} J^N_1(\pi, \pi^{(j)}_*, \ldots, \pi^{(j)}_*) - \varepsilon &\leq \sup_{\pi \in \mathcal{A}_{\rm DM}} J_{\bm{\nu}^{(j)}_*}(\pi)  - \frac{2\varepsilon}{3} \\
        &\leq J_{\bm{\nu}^{(j)}_*}(\pi^{(j)}_*) - \frac{\varepsilon}{6}\\
        & \leq J^N_1(\pi^{(j)}_*, \ldots, \pi^{(j)}_*)
    \end{align*}
    as required.
\end{proof}

In order to obtain an approximate Nash equilibrium, it would be desirable to take $\eta$ small and close to $0$. However, in order for the map $\Psi^{\rm aug} \circ \Phi_{\eta}$ to be a contraction, $\eta$ is required to be sufficiently large. Nonetheless, whilst adopting a version of the prior descent algorithm in \cite{cui2021approximately} to our computational example in \Cref{sec_numerics}, where previous iterates of computed policies are used as the reference measure for subsequent iterates, we observe that the algorithm generally converges well for small values of $\eta$.

\begin{rem}
The above $\varepsilon$-optimal policies for the finite player game, constructed from mean-field equilibria, only depend on the agents' observations of their own augmented state variables and on time, but not on their respective empirical distributions amongst all agents. This is due to the fact that the empirical distribution of the population in the augmented state variables is approximated by the time-varying but deterministic measure flow from the fixed point characterisation of the mean-field game equilibrium. Intuitively, the deterministic nature of this flow in the large population limit corresponds to a law of large numbers being applied to empirical measures (by propagation of chaos).

We want to emphasize, moreover, that the deterministic measure flow in the augmented state space also reveals the proportions in the large population, which are attained endogenously in mean-field equilibrium with respect to the chosen information access (i.e.\ actively controlled observation delay), and moreover the respective distributions of actions taken during the delay period by the respective subpopulations. This is illustrated in the computational example in \Cref{sec_numerics}. Analogous comments apply also to the approximate $\varepsilon$-Nash equilibria as described above.
%
\end{rem}

\section{Extension to interaction via controls}\label{sec:extended}

In this section we outline how to incorporate player interactions via the controls in the MFG-MCDM. Such models in the classical fully observable case are sometimes referred to as extended MFGs in the literature \cite{carmona2013mean, guo2019learning}. Under a similar framework to \Cref{sec:MFG,sec:regularise}, we obtain analogous results for a contraction in the fixed point interaction, under corresponding Lipschitz conditions. When interactions between players occur via the controls, the transition kernel and reward function will be dependent on a joint distribution on the state space and action space. 

\begin{defn}
    An MCDM with mean-field interaction in both  states and controls is described by a tuple $\langle \mathcal{X}, A, \mathcal{D}, \mathcal{C}, p, r \rangle$, where
    \begin{itemize}
        \item the \textit{state space} $\mathcal{X}$ and the \textit{action space} $A$ are a Polish spaces, equipped with their respective Borel sigma algebra;
        \item the set of \textit{delay values} is $\mathcal{D} = \{ d_0, \ldots, d_K\} \subset \mathbb Z_{+}$, with $ 0 \leq d_K < \ldots < d_0$;
        \item the set of \textit{cost values} is $\mathcal{C} = \{ c_0, c_1, \ldots, c_K\}\subset \R_+$, with $0 = c_0 < c_1 \ldots < c_K $;
        \item the \textit{transition kernel} is $p: \mathcal{X} \times A \times \Pcal(\mathcal{X} \times A) \to \Pcal(\mathcal{X})$;
        \item the \textit{reward function} $r: \mathcal{X} \times A \times \Pcal(\mathcal{X} \times A)\to [0, \infty)$ is bounded and measurable.
    \end{itemize}
\end{defn}

In order to define an MFNE, we require an analogous mapping to \Cref{defn:aug_to_underlying}, from $\Pcal(\tilde{\mathcal{Y}})$ to $\Pcal(\mathcal{X} \times A)^{d_0+1}$. In this instance, we consider the augmented space 
\begin{align*}
    \tilde{\mathcal{Y}} \coloneqq \bigcup_{d=d_K}^{d_0}\left( \{d\} \times \mathcal{X}^{d_0-d+1} \times A^{d_0} \right) = \bigsqcup^{d_0}_{d=d_K} \left( \mathcal{X}^{d_0-d+1} \times A^{d_0} \right) \eqqcolon \bigsqcup^{d_0}_{d=d_K}\tilde{\mathcal{Y}}_d..
\end{align*}
An element $\tilde{y}_n \in \tilde{\mathcal{Y}}$ is written as
\begin{align*}
    \tilde{y}_n = (d,\ &x_{n-d_0}, \ldots, x_{n-d},
     a_{n-d_0},\ldots, a_{n-d}, a_{n-d+1},\ldots, a_{n-1}).
\end{align*}

\begin{defn}
    Let $\bm{\nu} = (\nu_t)_t \in \Pcal(\tilde{\mathcal{Y}})^{\infty}$. For each $t \geq 0$, we write $\nu_t = \sum_d w^d_t \nu^d_t$, where $w^d_t \in [0,1]$ and $\nu^d_t \in \Pcal(\tilde{\mathcal{Y}}_d)$. For $d \leq \pr{d} \in \{0,\ldots, d_0\}$, let $\nu^{x_{t-\pr{d}}, a_{t-\pr{d}}}_t$ and $\nu^{x_{t-\pr{d}}, a_{t-\pr{d}}\vert d}_t$ be the marginals of $\nu_t$ and $\nu^{d}_t$ in the $(x_{t-\pr{d}},a_{t-\pr{d}})$--coordinates respectively. Starting with $\pr{d} = d_0$, we define $\mu_{t,d_0} = \nu^{x_{t-d_0}, a_{t-d_0}}_t \in \Pcal(\mathcal{X}\times A)$. Then, for each $0 \leq   \pr{d} < d_0$, we define
\begin{align*}
    \mu_{t,\pr{d}}& \coloneqq \sum^{d_0}_{d=d_K} w^d_t \xi^{d}_{t,\pr{d}},\quad \xi^{d}_{t,\pr{d}}\in \Pcal(\mathcal{X} \times A)\\
    \xi^{d}_{t,\pr{d}}(C) & \coloneqq \begin{cases}
        \nu^{x_{t-\pr{d}}, a_{t-\pr{d}}\mid d}_t(C), & d \leq \pr{d}; \\
       (\nu^{d,x,a}_t \star_{d - \pr{d}} p^{d, \pr{d}}_{\mu}) (C \times A^{\pr{d}}), & d > \pr{d},
    \end{cases}\\
     p_{d, \pr{d}}^{\mu} &\coloneqq (p^{\mu_{t,d}},\ldots, p^{\mu_{t, \pr{d}-1}}),\ C \in \mathcal{B}(\mathcal{X} \times A),
\end{align*}
where $\nu^{d,x,a}_t$ is the marginal of $\nu^d_t$ on the $(x_{t-d}, a_{t-d},\ldots , a_{t-1})$--coordinates, and the $\star$ notation is as in \Cref{def:star_operator}. Finally, we define $\bm{\mu}^{\nu}_t \coloneqq (\mu_{t,d})^{d_0}_{d=0}$ and $\mathcal{M}(\nu_0) \coloneqq \mu_{t,0}$.
\end{defn}

The best-response map and measure flow map of a regularised MFNE in this case is similar to that in \Cref{defn:regularised MFNE-MCDM}, the only difference being that $\mu_{t,d}$ are measures over the space $\mathcal{X} \times A$.

\begin{defn}
Let $\bm{\nu}=(\nu_t)_t \in \Pcal(\tilde{\mathcal{Y}})^{\infty}$ and $\eta > 0$. Define:
\begin{enumerate}
    \item[(i)] The best-response map $\Phi_{\eta}: \Pcal(\tilde{\mathcal{Y}})^{\infty} \to \mathcal{A}_{\rm DM}$, given by
        \begin{align*}
            \Phi_{\eta}(\bm{\nu})_t(\de u\mid y) = \frac{\exp\left(  Q^{*}_{\eta,\bm{\nu}}(t,y,u)/\eta\right)}{\int_U \exp\left( Q^{*}_{\eta,\bm{\nu}}(t,y,u)/\eta\right) q(\de u)} q(\de u).
        \end{align*}
    \item[(ii)] $\Psi^{\mathrm{aug}}: \mathcal{A}_{\rm DM} \to \Pcal(\tilde{\mathcal{Y}})^{\infty}$, the measure flow map as defined previously, where $\Psi^{\mathrm{aug}}(\pi)_0 = \nu_0$ and for $t \geq 0$,
        \begin{align*}
            \Psi^{\mathrm{aug}}(\pi)_{t+1}(\cdot) =\int_{\tilde{\mathcal{Y}}} \int_U p_y\Big(\cdot \mid y, u, \bm{\mu}_t^{\Psi^{\mathrm{aug}}(\pi)} \Big) \pi_t(\de u \mid y) \Psi^{\mathrm{aug}}(\pi)_t(\de y).
        \end{align*}
    \item[(iii)] A regularised MFNE for the MCDM problem $(\pi^{*}, \bm{\nu}^{*})\in \mathcal{A}_{\rm DM} \times \Pcal(\tilde{\mathcal{Y}})^{\infty}$ is given by a fixed point $\bm{\nu}^{*}$ of $\Psi^{\mathrm{aug}} \circ \Phi_{\eta}$, for which $\pi^{*}  = \Phi_{\eta}(\bm{\nu}^{*})$ (best response map) and $\bm{\nu}^{*} = \Psi^{\mathrm{aug}}(\pi^{*})$ (measure flow induced by policy) holds.
\end{enumerate}
\end{defn}
    
Then, we obtain the analogous statement for a unique regularised fixed point, provided that the regulariser parameter $\eta$ is large enough.

\begin{thm}
    Suppose that the transition kernel $p$ and reward function $r$ are Lipschitz continuous with constants $L_p$ and $L_r$, with bounds $M_p$ and $M_r$ respectively. Recall the constants $L_P$, $L_R$ and $L_M$ as defined in \Cref{sec:MFG}, with the appropriate replacement of state-action joint measure flow for $\bm{\mu}$. Let $\zeta$ and $\eta^{*}$ be constants such that $\zeta > 2L_P L_M +2$, and $\eta^{*} >  \frac{2M_R}{-(1-\gamma)\log \gamma}$, and recall the constants
    \begin{align*}
            q^{*} = \frac{M_R}{1-\gamma},\quad l_{\eta^{*}} = \frac{L_M(L_R + 2 \gamma q^{*}L_P)}{1- \gamma \exp\left(\frac{2q^{*}}{\eta^{*}}\right)}, \quad L_{\Psi} = \frac{2L_P}{2L_P L_M +1} \left( \frac{\zeta}{\zeta - 2L_P L_M -2} + \frac{1}{\zeta-1} \right).
    \end{align*}
    Then, for any $\eta > \sqrt{2q^{*} l_{\eta^{*}} L_{\Psi}}$, the fixed point operator $\Psi^{\mathrm{aug}} \circ \Phi_{\eta}$ is a contraction mapping on the space $(\Pcal(\tilde{\mathcal{Y}})^{\infty}, \delta_{\infty})$, where the constant $\zeta$ in the metric $\delta_{\infty}$ in \eqref{eq_flow_metric} is as chosen above. In particular, there exists a unique fixed point for $\Psi^{\mathrm{aug}} \circ \Phi_{\eta}$, which is a regularised MFNE for the MFG-MCDM problem.
\end{thm}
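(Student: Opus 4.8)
The plan is to follow, essentially verbatim, the proof architecture of \Cref{thm:contraction_reg_mcdm}, since the only structural change is that the induced measure flow $\bm{\mu}^{\nu}_t$ now takes values in $\Pcal(\mathcal{X} \times A)^{d_0+1}$ rather than $\Pcal(\mathcal{X})^{d_0+1}$. Concretely, once one re-establishes the three Lipschitz estimates---(i) the augmented-to-underlying map $\nu_t \mapsto \bm{\mu}^{\nu}_t$ is $L_M$-Lipschitz, (ii) the best-response map $\Phi_{\eta}$ is $K_{\eta}$-Lipschitz with $K_{\eta} = 2 q^{*} l_{\eta^{*}}/\eta^2$, and (iii) the measure flow map $\Psi^{\mathrm{aug}}$ is $L_{\Psi}$-Lipschitz---the contraction follows immediately by composition, exactly as in the proof of \Cref{thm:contraction_reg_mcdm}: for $\bm{\nu}, \hat{\bm{\nu}}$ with $\pi = \Phi_{\eta}(\bm{\nu})$ and $\hat{\pi} = \Phi_{\eta}(\hat{\bm{\nu}})$, one obtains
\[
\delta_{\infty}\big(\Psi^{\mathrm{aug}}(\pi), \Psi^{\mathrm{aug}}(\hat{\pi})\big) \leq L_{\Psi}\, K_{\eta}\, \delta_{\infty}(\bm{\nu}, \hat{\bm{\nu}}),
\]
and the hypothesis $\eta > \sqrt{2 q^{*} l_{\eta^{*}} L_{\Psi}}$ forces $L_{\Psi} K_{\eta} < 1$, so Banach's fixed point theorem on the complete space $(\Pcal(\tilde{\mathcal{Y}})^{\infty}, \delta_{\infty})$ yields the unique fixed point, which is the regularised MFNE.

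First, I would verify that the preliminary structural ingredients carry over to the joint-measure setting. Since $\mathcal{X} \times A$ is a product of compacts and hence compact, \Cref{lem:dominating_measure}, \Cref{prop: maxmin_ineq}, and the mean value theorem argument underlying \Cref{prop_softmax_derivative} all apply verbatim with $\delta_{TV}$ now taken on $\Pcal(\mathcal{X}\times A)$. The $t$-step kernel bounds of \Cref{prop:lip_cont}(a) also transfer directly, because the transition kernel $p$ still acts only on the $\mathcal{X}$-component while the $A$-coordinates are transported deterministically; the constants $L_P = d_0 L_p$ and $L_R = L_r + 2 M_r L_P$ are then defined identically.

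The step that genuinely requires reworking---and the main obstacle---is the analogue of \Cref{lem:nu_to_mu}, namely showing that the new map $\nu_t \mapsto \bm{\mu}^{\nu}_t$ remains Lipschitz with the same constant $L_M = \sum_{d=0}^{d_0} (d L_p)^d$. The subtlety is that the pushforward $\star_{d-\pr{d}}\, p^{\mu}_{d,\pr{d}}$ now propagates the $\mathcal{X}$-marginal through the transition kernels while carrying the action coordinate $a_{t-\pr{d}}$ as a spectator variable. I would repeat the inductive estimate over $\pr{d}$ from \Cref{lem:nu_to_mu}, splitting the difference of pushforwards into a \emph{kernel} term (controlled via the Lipschitz dependence of $p^{(d-\pr{d})}$ on its measure arguments, \Cref{prop:lip_cont}(a)) and a \emph{measure} term (controlled by $\delta_{TV}(\nu_t, \hat{\nu}_t)$). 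The key check is that retaining the spectator $A$-coordinate does not inflate the total variation distance: since projection onto any marginal is a contraction for $\delta_{TV}$, the same recursion $\delta(\pr{d}) \leq \sum_{d>\pr{d}} (d-\pr{d})(L_p)^{d-\pr{d}}\,\delta(d_0) + \delta_{TV}(\nu_t, \hat{\nu}_t)$ is recovered, yielding the identical constant $L_M$.

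Finally, with $L_M$ in hand, the Lipschitz estimate for the $Q$-function (\Cref{lem:qregstar_lip}, whose induction uses only the uniform bound $q^{*}$, \Cref{prop: maxmin_ineq}, and the mean value theorem), for $\Phi_{\eta}$ (\Cref{cor:lip_best_response}), and for $\Psi^{\mathrm{aug}}$ (\Cref{prop:lip_measure_flow}) all transfer without modification, since each of those proofs accesses $\bm{\mu}^{\nu}_t$ solely through its $L_M$-Lipschitz dependence on $\nu_t$ together with the bounds and Lipschitz constants of $p_y$ and $r_y$. The threshold $\eta > \sqrt{2 q^{*} l_{\eta^{*}} L_{\Psi}}$ and the explicit values of $q^{*}, l_{\eta^{*}}, L_{\Psi}$ are therefore exactly as stated, which completes the plan.
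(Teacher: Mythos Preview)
Your proposal is correct and takes essentially the same approach as the paper, which simply states that the proof is ``essentially identical to that of \Cref{thm:contraction_reg_mcdm}, adjusted for the fact that the Lipschitz constants are now with respect to a joint state-action measure flow $\bm{\mu}$.'' If anything, you are more explicit than the paper about which ingredients (\Cref{lem:nu_to_mu}, \Cref{prop:lip_cont}, \Cref{lem:qregstar_lip}, \Cref{cor:lip_best_response}, \Cref{prop:lip_measure_flow}) need to be re-verified in the joint-measure setting and why the spectator $A$-coordinate does not affect the constants.
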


\begin{proof}
   The proof is essentially identical to that of \Cref{thm:contraction_reg_mcdm}, adjusted for the fact that the Lipschitz constants are now with respect to a joint state-action measure flow $\bm{\mu}$.
\end{proof}

\section{Computational example in epidemiology}\label{sec_numerics}

In this section, we apply our MFG-MCDM approach to a discrete-time SIS (susceptible-infected-susceptible) model, adapted and extended from that in \cite{cui2021approximately} to incorporate costly, delayed and controlled observations, and also an extension by which the evolution of the epidemic depends on the joint distribution of states (regarding infection) and actions (regarding social distancing) chosen by the population. The simple nature of the model allows a straightforward interpretation of the MFNE, illustrating the effects of costly information acquisition on population behaviour. Other epidemiological models include the SIR (susceptible-infected-recovered) variant, as well as models involving demographic and geographical heterogeneity, such as \cite{cont2021modelling}.

In our numerical experiments, we allow the dynamics to depend on the joint distribution of states and actions, which is crucial in capturing the basic effects from infection and distancing.
In this model, a virus circulates amongst the population, and each agent can take on two states: susceptible ($S$), or infected ($I$). At each moment, the agent can decide to go out ($U$) or socially distance ($D$). Thus we have the state space $\mathcal{X} = \{S, I\}$ and action space $A = \{U, D\}$. The probability of an agent being infected whilst going out is assumed to be proportional to the fraction of infected people who do not socially distance. Once infected, they have a constant probability of recovering at each unit in time. We use the following parameters for the transition kernel:
\begin{align*}
    p(S \mid I, U ) = p(S\mid I, D)  = p_{\mathrm{rec}}, \quad
    p(I \mid S, U) = 0.9^2  \mu_t(I, U), \quad
    p(I \mid S, D) = 0,
\end{align*}
where $p_{\mathrm{rec}}$ is the probability of recovery per day. 
There is a cost for socially distancing, and a larger cost for being infected. As we are considering a maximisation problem, we write the cost as a negative reward, which is given by

\begin{equation*}
\begin{aligned}
r(S,U) &= 0,      & r(S,D) &= -0.5, \\
r(I,U) &= -1.5,   & r(I,D) &= -1.0.
\end{aligned}
\end{equation*}

\begin{algorithm}[t]
\caption{Prior descent applied to MFG-MCDM}\label{prior_descent}
\SetKwInOut{Input}{Input}
\SetKwInOut{Output}{output}
\SetKwRepeat{Do}{do}{while}
\Input{Initial distribution $\bm{\nu}_0 \in \Pcal(\tilde{\mathcal{Y}})$, prior policy $q \in \mathcal{A}_{\rm DM}$, $tol$}
\Input{Number of iterations per loop $I$, regularisation parameter $\eta >0$, truncation time $T$.}
\While{$RelExpl> tol$}{
    \For{$i = 0, 1, \ldots, I$}{
        \For{$t = 0, 1, \ldots, T-1$}{
        Compute $\bm{\mu}^{\nu}_t = (\mu_{t, d})^{d_0}_{d=0} \in \Pcal(\mathcal{X})^{d_0+1}$.\\
        Compute the regularised $Q$-function $Q^{*}_{\eta,\bm{\nu}}(t,y,u)$ for fixed measures $\bm{\mu}^{\nu}_t$.\\
        Compute the softmax policy: $\pi^{\mathrm{soft}} \leftarrow \Phi_{\eta}(\bm{\nu})$.\\
        Compute the induced mean-field: $\bm{\nu}_{t+1} \leftarrow \Psi^{\mathrm{aug}}(\pi^{\mathrm{soft}})_{t+1}$.
        }
      }
      $ q \leftarrow \pi^{\mathrm{soft}}_{\eta}$.
      }
\end{algorithm}

In addition to the above, we introduce the notion of waiting times to receive test results. Assume that during a pandemic, the population undergoes daily testing in order to determine whether they are infected or not. Here we assume the availability of two testing options, the free option which requires a 3-day turnaround, and a paid option which offers a next-day result. We will also refer to the paid option as quick testing. We thus have for our model
\begin{align*}
    \mathcal{D} = \{d_0 = 3,\ d_1 = 1\}, \quad \mathcal{C} = \{c_0 = 0,\ c_1 >0 \},
\end{align*}
where we shall consider different values of $c_1$ in our numerical experiments.

For the computation of MFNEs for our model, we utilise the \texttt{mfglib} Python package \cite{mfglib}. We incorporate our own script for the mapping $\bm{\nu}\mapsto \bm{\mu}^{\nu}$ so that the existing library can be adapted for the MFG-MCDM, and in particular for the computation on the augmented space. We first initialise with a uniform policy as the reference measure $q$, and repeatedly apply the mapping $\Psi^{\mathrm{aug}} \circ \Phi_{\eta}$ for a range of values of the regularisation parameter $\eta$. As a benchmark to test for the convergence towards a regularised MFNE, we utilise the exploitability score, which, for a policy $\pi$, is defined by
\begin{align*}
    \mbox{Expl} (\pi) \coloneqq \max_{\tilde{\pi}} J_{\Psi^{\mathrm{aug}}(\pi)}(\tilde{\pi}) - J_{\Psi^{\mathrm{aug}}(\pi)}(\pi).
\end{align*}
The exploitability score measures the suboptimality gap for a policy $\pi$ when computed with the measure flow induced by the map $\Psi^{\mathrm{aug}}$. An exploitability score is 0 if and only if $\pi$ is an MFNE for the MFG, and a score of $\varepsilon$ indicates that $\pi$ is an $\varepsilon$-MFNE. We refer to the literature such as \cite{mf-omo, perolat2021scaling, lauriere2022learning} for a more detailed discussion. As the exploitability score depends on the rewards and the initial policy, we consider instead the relative exploitability scaled by the initial value.

\begin{figure}[t!]
    \centering
    \begin{minipage}{0.55\textwidth}
        \centering
        \includegraphics[width=0.95\textwidth]{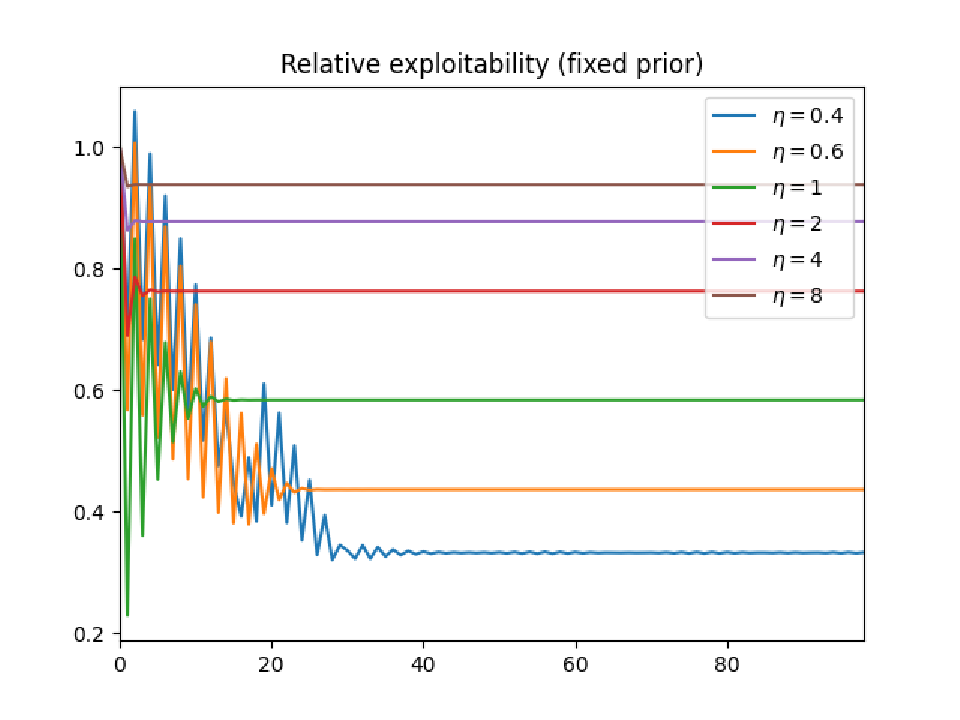}
    \end{minipage}
    \vspace{1em}
    \begin{minipage}{0.45\textwidth}
        \centering
        \includegraphics[width=1.1\textwidth]{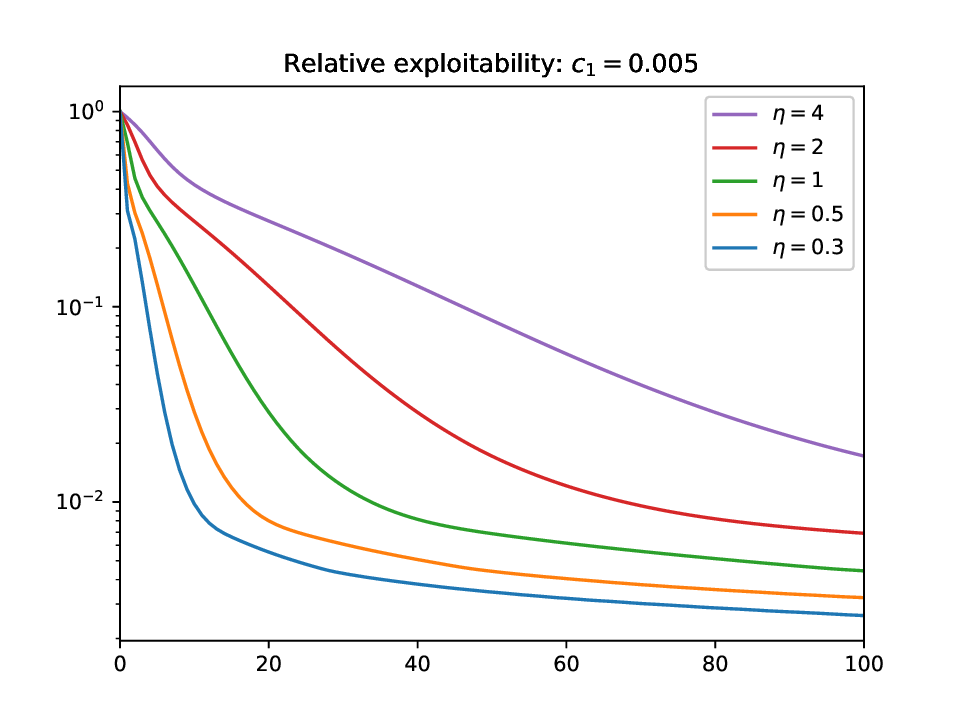}
    \end{minipage}
    \begin{minipage}{0.45\textwidth}
        \centering
        \includegraphics[width=1.1\textwidth]{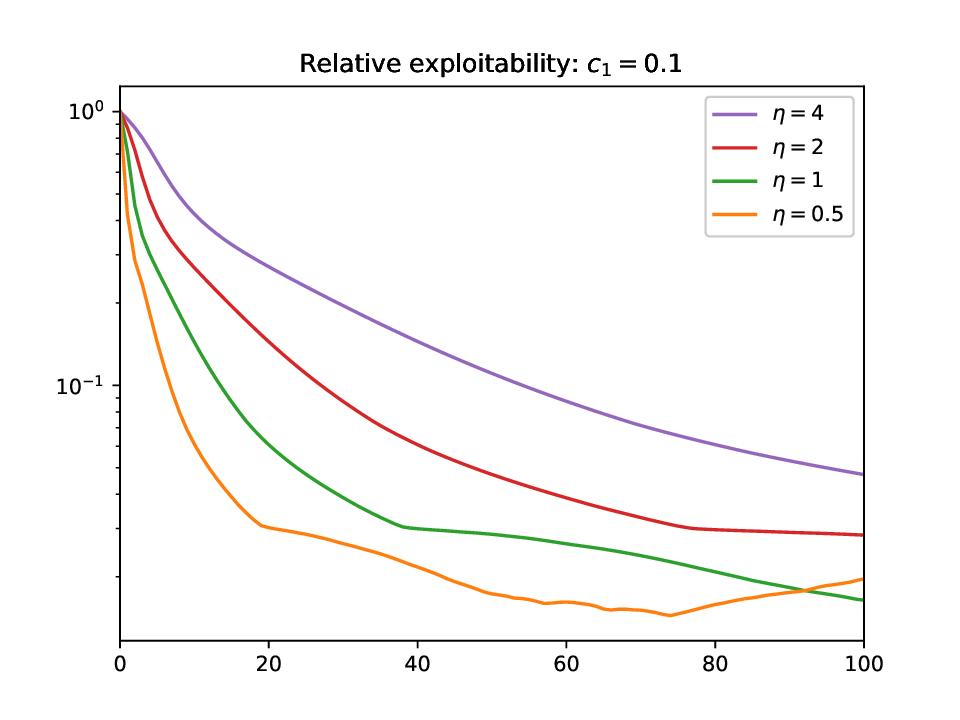}
    \end{minipage}
    \caption{
    Top: relative exploitability for $c_1 = 0.5$ when applied to a uniform policy as a reference measure, fixed across all iterations (on the horizontal axis). Bottom row: relative exploitability for the prior descent algorithm for $c_1 = 0.005$ and $c_1 = 0.1$.
    }
    \label{exploitability}
\end{figure}
The top graph of \Cref{exploitability}
shows the convergence of the relative exploitability, with the uniform policy as reference measure, fixed across all iterations. We see that for lower values of $\eta$, the algorithm converges to a lower relative exploitability value. This corresponds to the fact that the regularised MFG  approximates the non-regularised MFG more closely for $\eta$ being low. However, lower values of $\eta$ require a larger number of iterations for convergence. For $\eta$ being less than 0.2, the algorithm does not even appear to converge in our tests but explodes numerically (not plotted in the graph). This demonstrates an inherent limitation of the use of regularisation: While a sufficiently high value of $\eta$ guarantees convergence of the iteration, 
this may simultaneously lead to a MFNE of the regularized MFG which may approximate the non-regularized MFG problem poorly. Moreover, searching for a suitable value of $\eta$ is computationally expensive.

To mitigate the above issues, we utilise the prior descent algorithm \cite{cui2021approximately}. Here, the reference measure is dynamically updated, by using the policy obtained from the previous iteration to determine the reference measure for the next iteration. The reference measure can also be updated after a number of iterations instead, creating a double loop for the algorithm. We summarise the prior descent algorithm for the MFG with control of information speed in \Cref{prior_descent}. The relative exploitability score is plotted in the bottom row of \Cref{exploitability}. We see that prior descent vastly outperforms the case of using a fixed prior. In \cite{cui2021approximately}, the prior descent algorithm is further improved by using the heuristic $\eta_{j+1} = \eta_j \cdot c$ for some constant $c > 1$, gradually increasing the regularisation to aid convergence. We also applied this heuristic for our problem, but for our case we do not see significant differences compared to initialising with large fixed values of $\eta$.

\begin{figure}[t!]
    \centering
    \begin{minipage}{0.45\textwidth}
        \centering
        \includegraphics[width=1.1\textwidth]{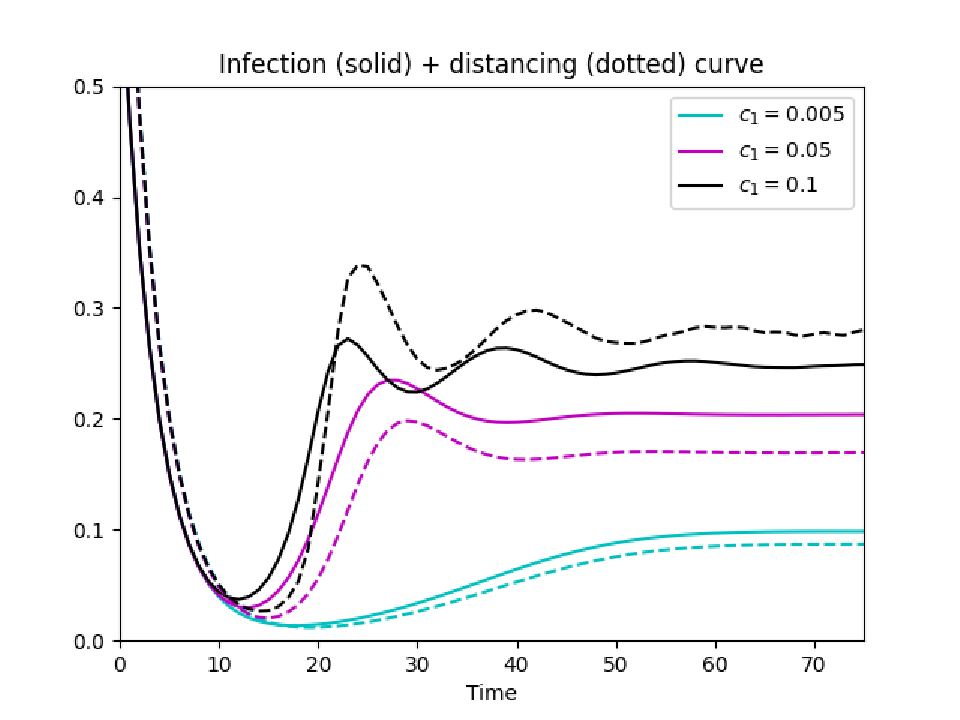}
    \end{minipage}
    \begin{minipage}{0.45\textwidth}
        \centering
        \includegraphics[width=1.1\textwidth]{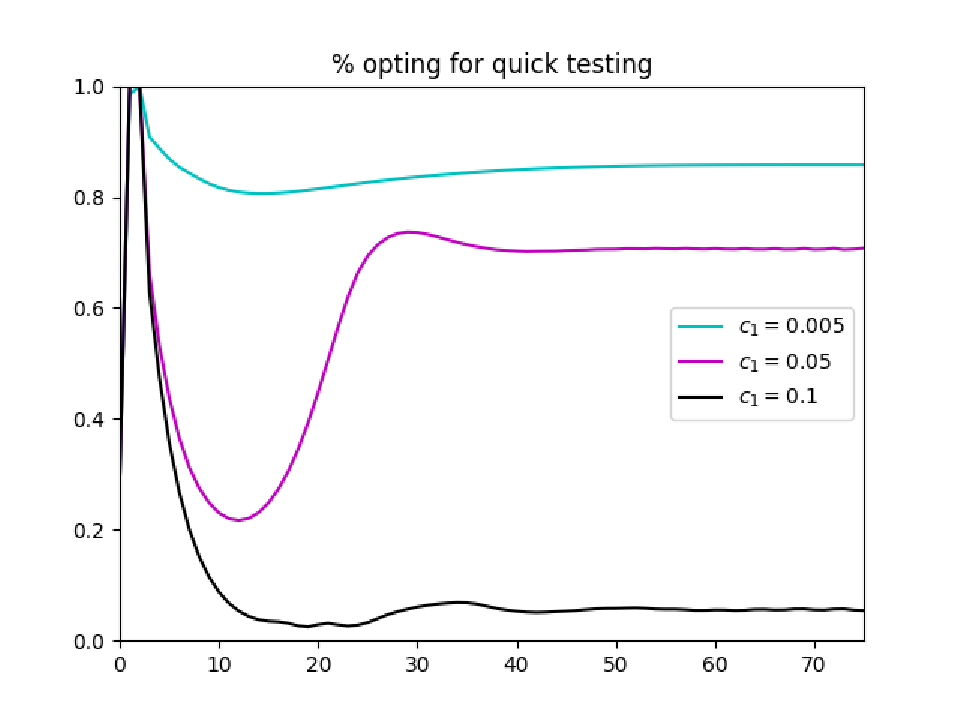} 
    \end{minipage}
    \caption{
    Left: lower testing costs are associated with lower levels of infection, with oscillations around equilibrium when the cost is high;  a delayed response is then apparent. Right: proportion opting for fast, more expensive test.}
    \label{fig:numerics}
\end{figure}

\Cref{fig:numerics} depicts the effects of the testing cost on the population behaviour at a Nash equilibrium. The value of $p_{\mathrm{rec}}$ here is $0.25$. The left graph shows the proportion of the population infected over time for $c_1 = 0.005, 0.05, 0.1$. The first observation is that a cheaper cost for quick testing corresponds to lower levels of infection, as a larger percentage of the population receives a more accurate estimate of their infectiousness, allowing a more timely optimal response. Moreover, a delayed response of social distancing leads to a higher amount of people infected and going out, which increases the probability of infection for a susceptible individual going out. This leads to the oscillations for the case of $c_1=0.1$ before it settles at an equilibrium. This delayed response can also be seen from 
the proportion of people who are socially distancing, superimposed on top as the dotted curves. For the low cost of $c_1 = 0.005$, the distancing curve follows the infection curve closely, indicating appropriate timely responses from the population. In contrast, there is a clear horizontal shift in the curves for the other two cases. In particular, for the case $c_1 = 0.1$, the peaks and troughs are separated by approximately three days, which corresponds to the fact that the optimal policy is free testing with a very high probability.

\begin{figure}[t!]
    \centering
    \includegraphics[width=0.75\textwidth]{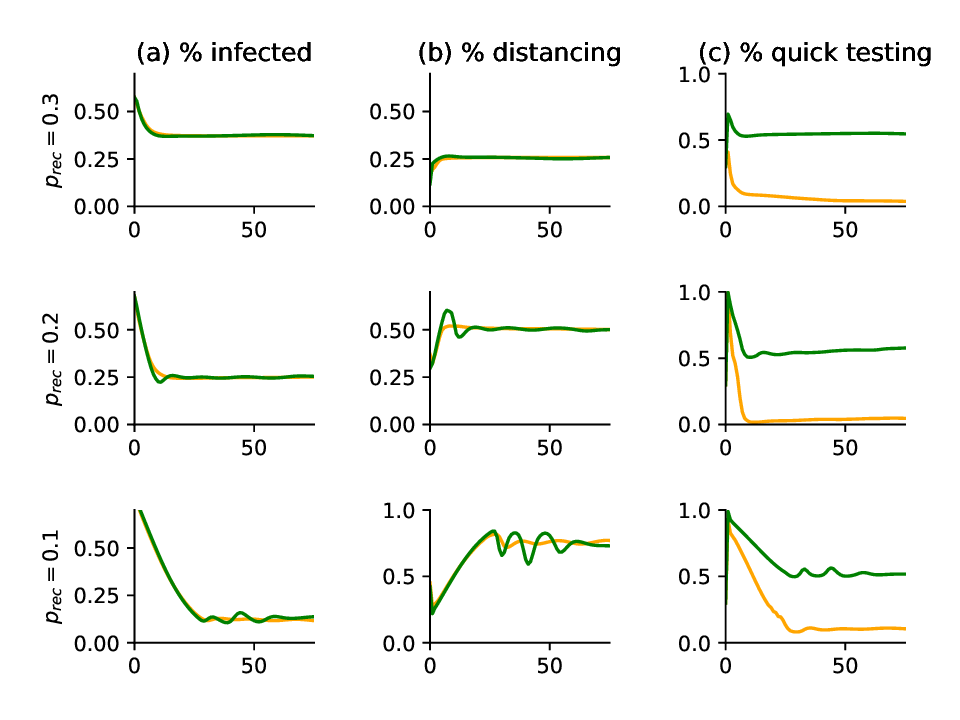}
    \caption{Population behaviour at MFNE for an inverted reward, plotted against the time duration of the problem on the horizontal axis, with $r(I,U) = -1.0$ and $r(I,D)=-1.5$ modelling selfish behaviour. Green line: $c_1 = 0.001$. Yellow line: $c_1 = 0.05$. Lower test cost here leads to higher infection as individuals are further incentivised to go out.}
    \label{fig:inverted_r}
\end{figure}

It is worth noting that the behaviour of the population at equilibrium depends on the choice of the reward function. In the above, we chose the reward such that an infected person is encouraged to socially distance, which models a population compliant with mitigation measures. If instead $r(I,U) > r(I,D)$, as is in  \cite{cui2021approximately}, individuals are incentivised to go out regardless of infection status, and distancing is preferred only when $\mu_t(I,U)$ is high. The population behaviour at equilibrium under this reward structure is shown in \Cref{fig:inverted_r}. We see higher levels of infection at equilibrium, as well as a relative insensitivity to the cost of quick testing, in contrast to the case of a compliant population. We also see more oscillatory distancing behaviour associated with low test costs. Finally, we note that a possible extension of the above model can include a heterogeneous population with proportional rewards that reflect varying preferences and attitudes within society.

\begin{rem}
Given the above example on medical testing, it appears that choices about more speedy but costly test results, for example as required for air travel during the COVID-19 pandemic, are often to be decided upon not at the time when the test result is to be \emph{received}, but at the (earlier) time when the test sample is being \emph{taken}. Although the MFG in the present paper is formulated for the former variant, let us explain briefly how our analysis indeed provides the correct solution also for the latter. The former variant, though possibly artificial for applications, is more easily formulated as a dynamic programming problem. Conversely, a decision problem with pre-commitment on the acquisition of costly information with adjustable delay periods, appears to be a more complicated formulation. To see that the optimal MFNE policy from our general framework formulated for the former variant provides simultaneously the solution for the latter variant, it suffices to note that the optimal policy at an MFNE, according to our augmented state space setting whose variables are showing in \Cref{defn:augmented_space}, is based solely on the latest state $x$ that has been observed, and the string of actions $(a)_{-d}^{-1}$ taken by the optimal policy since then, which all have been made subsequently solely based on $x$. That means that the representative agent already knows at the time of the medical test, the future date at which they will  receive the test results. In other words, the strategy can be interpreted more naturally as a decision about information acquisition being made actually at the timing of the medical test. Likewise reasoning applies beyond the concrete example of the present section. In regards to the acquisition costs $c_0,\ldots,c_K$ for timely information access, it would clearly be possible to modify the setup for payment dates of information access to be at the earlier (e.g. testing) date instead of at the later date (test results received). This would simply require a suitable discount such that the time-adjusted values of respective payments coincide, that means taking $c_i \gamma^{i}$ if the payment is to be done at the earlier time of medical testing. 

\end{rem}

\subsection*{Acknowledgments}
J. Tam is supported by the EPSRC Centre for Doctoral Training in Mathematics of Random Systems: Analysis, Modelling and Simulation (EP/S023925/1). D. Becherer is supported by German Science Foundation DFG - Berlin-Oxford IRTG 2544 "Stochastic Analysis in Interaction" - Project-ID 41020858, and acknowledges funding by DFG - CRC/TRR 388 "Rough Analysis, Stochastic Dynamics and Related Fields" - Project ID 516748464.\\

The authors would like to thank Nils Mattiss from HU Berlin for his assistance towards the computational experiments, as well as the three anonymous referees for their feedback, which led to a substantial improvement of this paper. J. Tam gratefully acknowledges the support of the Department of Economics at the University of Verona, where the majority of the revisions were completed.

\bibliographystyle{abbrvurl}
\bibliography{ref}
\end{document}